%% file: main.tex
\documentclass[11pt]{article}

\usepackage{amsmath,amssymb,amsthm,amssymb}
\usepackage[margin=1in]{geometry}
\usepackage{xcolor}
\usepackage{mathtools}
\usepackage{adjustbox}
\usepackage{enumerate}
\usepackage{float}
\usepackage{tikz,tikz-cd,quiver}
\usepackage{subcaption}

\usetikzlibrary{calc}
\usetikzlibrary{arrows.meta}
\usetikzlibrary{decorations.markings}
\usepackage{hyperref}
\newcommand{\N}{{\mathbb{N}}}
\newcommand{\R}{{\mathbb{R}}}
\newcommand{\Z}{{\mathbb{Z}}}
\newcommand{\F}{{\mathbb{F}}}
\newcommand{\cD}{{\mathcal{D}}}
\newcommand{\cH}{{\mathcal{H}}}
\newcommand{\cU}{{\mathcal{U}}}
\newcommand{\cP}{{\mathcal{P}}}
\newcommand{\sM}{{\mathsf{M}}}
\newcommand{\Rp}{\mathbb{R}^+}
\newcommand{\Zp}{\mathbb{Z}^+}

\newcommand{\setof}[1]{\left\{ {#1}\right\}}
\newcommand{\setdef}[2]{\left\{ #1 \, \middle| \, #2 \right\}}
\newcommand{\Int}{\mathop{\mathrm{int}}\nolimits}
\newcommand{\cl}{\mathop{\mathrm{cl}}\nolimits}
\newcommand{\Inv}{\mathop{\mathrm{Inv}}\nolimits}
\newcommand{\dom}{\mathop{\mathrm{dom}}\nolimits}
\newcommand{\SCH}{\mathop{\Sigma \mathrm{CH}}\nolimits}
\newcommand{\Mod}[1]{\ (\mathrm{mod}\ #1)}
\newcommand{\stoptime}{\sigma}
\newcommand{\incl}{\tilde{\iota}}
\newcommand{\susp}{\Sigma_\cH}
\newcommand{\inv}{^{-1}}

\tikzset{
  midarrow/.style={
    postaction={decorate},
    decoration={markings, mark=at position #1 with {\arrow{>}}}
  }
}

\numberwithin{equation}{section}
\numberwithin{figure}{section}
\numberwithin{table}{section}

\hypersetup{colorlinks=true, linkcolor=blue, citecolor=blue, urlcolor=blue,
  pdftitle={Conley Index Theory for Hybrid Systems},
pdfauthor={Bernardo Rivas and William Kalies}}

\theoremstyle{plain}
\newtheorem{thm}{Theorem}[section]
\newtheorem{lem}[thm]{Lemma}
\newtheorem{prop}[thm]{Proposition}
\newtheorem{cor}[thm]{Corollary}

\theoremstyle{definition}
\newtheorem{defn}[thm]{Definition}
\newtheorem{ex}[thm]{Example}

\theoremstyle{remark}
\newtheorem{rem}[thm]{Remark}

\title{Conley Index Theory for Hybrid Systems}

\author{Bernardo Rivas\thanks{Department of Mathematics and Statistics, University of Toledo, Toledo, OH 43606, USA (\texttt{bernardo.dopradorivas@utoledo.edu}, \texttt{william.kalies@utoledo.edu}).}
\and William Kalies\footnotemark[2]}

\date{}

\begin{document}

\setcounter{footnote}{1}
\maketitle

\begin{abstract}
  \input{sections/abstract}
\end{abstract}

\input{sections/introduction}
\input{sections/preliminaries}

\input{sections/suspension}
\input{sections/conleyindex}
\input{sections/examples}
\input{sections/conclusion}

\section*{Acknowledgments}
The authors thank Ram Vasudevan for early discussions on this work and for the suggested applications. This work was supported by the Air Force Office of Scientific Research under award number FA9550-23-1-0400.

\bibliographystyle{amsplain}
\bibliography{references}

\end{document}

%% file: sections/abstract.tex
We define a homological Conley index for a class of hybrid dynamical systems. This is achieved by factoring through the hybrid suspension semiflow, which views a class of hybrid dynamics as a continuous semiflow. We establish that this invariant is well-defined, invariant under continuation, and satisfies the Wa\.zewski property and other reconstruction results. We compute the index for several examples, showcasing how classical reconstruction theorems apply in this setting to characterize hybrid dynamics.

%% file: sections/introduction.tex
\section{Introduction}
\label{sec:introduction}

Hybrid dynamical systems arise naturally when continuous dynamics are subject to discrete interventions or constraints. From the impact dynamics of a bouncing ball, to the switching logic of a thermostat, from robotic locomotion to neuron activation, the modeling of such systems generally involves differential equations with discrete state transitions that neither purely continuous nor discrete frameworks can adequately capture \cite{diBernardo2008Piecewise,Schaft2000Hybrid}.

A fundamental challenge in analyzing such systems is the understanding of their long-term behavior and structure of their attractors. While classical dynamical systems theory provides powerful tools for purely continuous or purely discrete systems, the combination in hybrid systems creates fundamental obstacles. The presence of guard sets and reset maps generates discontinuities that lead to complex phenomena, such as beating, stopping, or Zeno behavior.

This motivates the search for topological and algebraic invariants that can capture the essential dynamical features. Conley index theory, which assigns algebraic topological invariants to isolated invariant sets, provides such a framework. The theory provides a decomposition of the global dynamics into recurrent and gradient-like structures, allowing for a systematic analysis.
In the hybrid setting, initial steps towards Conley's theory have been taken in \cite{Goebel2023,Kvalheim2021} by generalizing the concept of chain-recurrence in a hybrid context.

Within classical dynamics, the homological Conley index provides a powerful tool for understanding the structure of the dynamics and allowing one to reconstruct dynamical information from algebraic topology. It has been applied to show existence of nontrivial invariant sets \cite{Conley1978}, heteroclinic orbits \cite{Conley1975}, fixed points \cite{McCord1988}, periodic orbits \cite{McCord1995} and chaotic dynamics \cite{Day2019,MischaikowMrozek1995,Handbook2002}.

Our main theoretical contribution is to formally define a homological Conley index for hybrid dynamical systems that satisfy the trapping guard condition as in \cite{Kvalheim2021}. This condition requires trajectories close to the guard to actually reach it in finite time and ``pass through'', avoiding grazing or limiting behavior at the guard. With that, we define the index of a hybrid system's isolated invariant set as the classical Conley index of its associated invariant set under the hybrid suspension semiflow.

This approach provides a bridge that allows the machinery of Conley theory for continuous semiflows to be applied directly to the analysis of a broad class of hybrid systems. By formalizing this connection, we enable the use of classical reconstruction results \cite{McCord1988,McCord1995} that would not otherwise be applicable in the discontinuous hybrid setting.

\subsection*{Organization}

The remainder of this paper is organized as follows. Section~\ref{sec:preliminaries} presents foundational concepts from classical Conley theory and hybrid dynamical systems, including the Trapping Guard Condition. Section~\ref{sec:suspension} details the construction of the hybrid suspension semiflow and establishes the correspondence between the dynamics of the original hybrid system and its suspension. In Section~\ref{sec:conleyindex}, we formally define the Hybrid Suspension Conley Index and prove its fundamental properties. Section~\ref{sec:examples} demonstrates the framework through applications to illustrative examples. Finally, Section~\ref{sec:conclusion} summarizes our contributions and discusses future directions.

%% file: sections/preliminaries.tex
\section{Preliminaries}
\label{sec:preliminaries}

In this section, we introduce the basic concepts of classical dynamical systems and hybrid dynamical systems that are used in the manuscript, and develop a Conley-theoretical approach to the hybrid setting. From here on, we consider $X$ to be a compact metric space, $\Rp = [0,\infty)$ and $\Zp = \Z \cap \Rp$ to be the nonnegative reals and integers respectively.

\subsection{Classical Dynamical Systems}
\label{sec:dynsys}
\begin{defn}
  \label{defn:DynamicalSystem}
  A \emph{local semiflow} in $X$ is a continuous function $\varphi : \cU \subseteq \Rp \times X \to X$ where $\cU$ is an open neighborhood of $\setof{0}\times X$ and $\varphi$ satisfies the semigroup property:
  \begin{enumerate}[(i)]
    \item $\varphi(0,x) = x$ for every $x \in X$, and
    \item for all $x \in X$ and $t,s \geq 0$, if $(t,x) \in \cU$ and $(s,\varphi(t,x))\in\cU$, then $(s+t,x) \in \cU$ and
      \begin{equation}
        \label{eq:semigroup}
        \varphi(s+t,x) = \varphi(s,\varphi(t,x)).
      \end{equation}
  \end{enumerate}
  We denote it by $(X,\varphi)$ and refer it by \emph{semiflow} when $\cU = \Rp \times X$.
\end{defn}
\begin{defn}
  \label{defn:OmegaLimit}
  Consider a semiflow $\varphi : \Rp \times X \to X$. For any $U \subseteq X$, we write $\varphi(t,U) = \setdef{\varphi(t,x)}{x \in U}$. The \emph{$\omega$-limit set} of $U$ is
  \begin{equation}
    \omega(U,\varphi) \coloneqq \bigcap_{t \geq 0} \cl\left(\bigcup_{s \geq t}\varphi(s,U)\right).
  \end{equation}
  A compact invariant set $A \subseteq X$ is an \emph{attractor} for $\varphi$ if there exists an open neighborhood $U \subseteq X$ of $A$ such that $\omega(U,\varphi) = A$.
\end{defn}
\begin{defn}
  \label{defn:ForwardInvariant}
  For $S \subseteq X$, define $\Inv^+(S) = \setdef{x \in X}{\varphi([0,\infty),x) \subseteq S}$ to be the set of points whose forward trajectories remain in $S$. A set $S$ is \emph{forward invariant} if $S = \Inv^+(S)$, or equivalently, if $\varphi(t,S) \subseteq S$ for all $t \geq 0$.
\end{defn}
\begin{defn}
  \label{defn:FullTrajectory}
  A \emph{backward trajectory} through $x \in X$ is a map $\gamma: (-\infty, 0] \to X$ such that $\gamma(0)=x$ and $\varphi(s, \gamma(t)) = \gamma(t+s)$ for all $s \geq 0$ and $t \leq 0$ with $s+t \leq 0$. A \emph{full trajectory} through $x$ is a map $\gamma: \R \to X$ with $\gamma(0) = x$ such that $\varphi(s, \gamma(t)) = \gamma(t+s)$ for all $s \geq 0$ and $t \in \R$. For a full trajectory $\gamma: \R \to X$, the \emph{$\alpha$-limit set} and \emph{$\omega$-limit set} of $\gamma$ are
  \[
    \alpha(\gamma) \coloneqq \bigcap_{t \leq 0} \cl(\gamma((-\infty, t])), \qquad \omega(\gamma) \coloneqq \bigcap_{t \geq 0} \cl(\gamma([t, \infty))).
  \]
\end{defn}
\begin{defn}
  \label{defn:BackwardInvariant}
  A set $S$ is \emph{backward invariant} if $S = \Inv^-(S)$ where
  \[
    \Inv^-(S) = \setdef{x \in X}{\gamma((-\infty,0]) \subseteq S \text{ for some backward trajectory } \gamma \text{ through } x}.
  \]
\end{defn}
\begin{defn}
  A set is \emph{invariant} if it is both forward and backward invariant.
  The \emph{maximal invariant set} in $K \subseteq X$ is
  \[
    \Inv(K,\varphi) \coloneqq \Inv^+(K,\varphi) \cap \Inv^-(K,\varphi),
  \]
  the set of points admitting full trajectories contained in $K$. A compact set $K$ is an \emph{isolating neighborhood} if $S = \Inv(K,\varphi) \subseteq \Int(K)$. In that case, $S$ is said to be an \emph{isolated invariant set}.
\end{defn}
\begin{defn}
  \label{defn:IndexPair}
  Let $S$ be an isolated invariant set. An \emph{index pair} for $S$ is a pair of compact sets $(N,L)$ such that $L \subset N$, and
  \begin{enumerate}[(i)]
    \item $S = \Inv(\cl(N\setminus L),\varphi)$ and $S \subseteq \Int(N\setminus L)$,
    \item $L$ is positively invariant in $N$, that is, if $x \in L$ and $\varphi([0,T],x) \subseteq N$ for some $T \geq 0$, then $\varphi([0,T],x) \subseteq L$,
    \item $L$ is an \emph{exit set} of the flow in $N$, that is, if $x \in N$ and $\varphi([0,\infty),x) \not\subseteq N$, then there exists $T > 0$ such that $\varphi([0,T],x) \subseteq N$ and $\varphi(T,x) \in L$.
  \end{enumerate}
\end{defn}

The index pair $(N,L)$ captures the essential topological structure of the flow near the isolated invariant set $S$. To quantify this structure, we employ tools from algebraic topology, specifically homology.

Briefly, homology is a mathematical tool designed to measure the ``shape" of a topological space by assigning a sequence of algebraic objects to it. For simplicity, we take coefficients over a (finite) field $\F$, which means these algebraic objects are vector spaces, denoted $H_k(X;\F)$. These vector spaces capture features of the space in different dimensions. Intuitively, the dimension of $H_0$ relates to the number of connected components, while the dimension of $H_k$ (for $k \geq 1$) detects $k$-dimensional ``holes''.

In the context of an index pair $(N,L)$, we are interested in the structure of the neighborhood $N$ relative to its exit set $L$. This is captured by \emph{relative homology}, denoted $H_k(N,L;\F)$. This measures the topological features of the quotient space $N/L$ obtained by collapsing the exit set $L$ to a single point.

For a comprehensive treatment of homology theory, we refer the reader to \cite{HatcherAT}. For computational homology, the reader might be interested in \cite{ComputationalHomologyTextbook,Nanda2021}.

\begin{defn}
  Consider a semiflow $(X,\varphi)$ and let $S$ be an isolated invariant set with index pair $(N,L)$.
  The \emph{(homology) Conley index} of an isolated invariant set $S$, denoted by $CH_*(S)$, is the relative homology of the index pair:
  \[
    CH_k(S) \coloneqq H_k(N,L;\F).
  \]
\end{defn}
We recall briefly some of the essential properties of the Conley index \cite{Conley1978,Salamon1985}. The index is \emph{well-defined}, meaning that for an isolated invariant set $S$, the relative homology for any two index pairs $(N,L)$ and $(N',L')$ is the same, that is, $H_*(N,L) \cong H_*(N',L')$. Another feature is its \emph{robustness} under perturbation: for a family of isolated invariant sets $S_\lambda$ that varies continuously with a parameter $\lambda$, the Conley index $CH_*(S_\lambda)$ remains constant. Finally, the index has existence criterion via the \emph{Wazewski's Principle}, which states that if $CH_*(S)$ is nontrivial, the invariant set $S$ must be nonempty.

We illustrate with two examples what kind of indices one might expect from basic invariant structures.
\begin{ex}[Theorem 3.13 in \cite{Handbook2002}]
  \label{ex:conley}
  Let $S$ be a hyperbolic fixed point with an unstable manifold of dimension $n$. Then
  \[
    CH_k(S) =
    \begin{cases}
      \F & \text{if } k = n \\
      0 & \text{ otherwise.}
    \end{cases}
  \]
  This is illustrated in Figure~\ref{fig:conley_fp}.
\end{ex}
\begin{ex}[Corollary 3.17 in \cite{Handbook2002}]
  Let $S$ be a hyperbolic invariant set that is diffeomorphic to a circle. Assume that $S$ has an oriented unstable manifold of dimension $n+1$. Then
  \[
    CH_k(S) =
    \begin{cases}
      \F & \text{if } k = n,n+1 \\
      0 & \text{ otherwise.}
    \end{cases}
  \]
  This is illustrated in Figure~\ref{fig:conley_periodic}.
\end{ex}
\begin{figure}[ht]
  \centering
  \begin{subfigure}[b]{0.48\textwidth}
    \centering
    \adjustbox{max width=\linewidth}{%
    \begin{tikzpicture}
      \draw[->, thick] (-3,0) -- (3,0) node[below right] {$\R^{n_u}$};
      \draw[->, thick] (0,-3) -- (0,3) node[below left] {$\R^{n_s}$};
      \coordinate (A) at (-2.5, -2);
      \coordinate (B) at (2.5, 2);
      \fill[gray, opacity=0.3] (A) rectangle (B);
      \fill[darkgray, opacity=0.6] (A) rectangle (-2, 2);
      \fill[darkgray, opacity=0.6] (2, -2) rectangle (B);
      \foreach \sx in {1,-1}
        \foreach \sy in {1,-1}
          \draw[midarrow=0.5] plot[domain=0.34:2.6, samples=50] ({\sx*\x}, {\sy*0.85/\x});
      \node[circle, fill=black, label=below left:$S$] at (0,0) {};
      \node at (1.5, -1.5) {$N$};
      \node at (2.25, -1.5) {$L$};
    \end{tikzpicture}}
    \caption{Hyperbolic fixed point.}
    \label{fig:conley_fp}
  \end{subfigure}
  \hfill
  \begin{subfigure}[b]{0.48\textwidth}
    \centering
    \adjustbox{max width=\linewidth}{%
    \begin{tikzpicture}
      \fill[darkgray, even odd rule,opacity=0.6] (0,0) circle (3.0) (0,0) circle (2.5);
      \fill[gray, even odd rule,opacity=0.3] (0,0) circle (2.5) (0,0) circle (1.5);
      \fill[darkgray, even odd rule,opacity=0.6] (0,0) circle (1.5) (0,0) circle (1.0);
      \draw[midarrow=0.5] plot[domain=0:260, samples=80] ({\x+80}:{2.03+0.05*exp(\x/110)});
      \draw[midarrow=0.5] plot[domain=0:260, samples=80] ({\x+200}:{1.97-0.05*exp(\x/110)});
      \draw[black,thick,midarrow=0.5] (0,0) circle (2.0);
      \node at (3.2,0.0) {$L$};
      \node at (2.25,0.0) {$N$};
      \node at (-1.47,0.85) {$S$};
    \end{tikzpicture}}
    \caption{Hyperbolic periodic orbit.}
    \label{fig:conley_periodic}
  \end{subfigure}
  \caption{Illustrations of index pairs $(N,L)$ for an isolated invariant set $S$. The set $L$ in dark gray and $N\setminus L$ in light gray.}
  \label{fig:conley_example}
\end{figure}
\begin{rem}
  Observe that computing the index does not yield complete knowledge about the maximal invariant set. The most straightforward example happens when $CH_*(S)=CH_*(\emptyset)$. This does not imply $S=\emptyset$ (e.g. $x'=x^2$, $S=\setof{0}$, $N=[-1,2]$, $L=[1,2]$). Similarly, one cannot conclude the existence of a periodic orbit merely based on the fact that $CH_*(S)$ coincides with the Conley index of a periodic orbit. However, under certain conditions, one can obtain information on the invariant set.
\end{rem}
\begin{thm}[\cite{McCord1988,McCord1995}]
  \label{thm:McCord}
  Let $(X,\varphi)$ be a semiflow and $S$ an isolated invariant set.
  \begin{enumerate}[(i)]
    \item If $CH_n(S) = \F$ for some $n \geq 0$ and $CH_k(S) = 0$ for all $k \neq n$, then $S$ contains a fixed point.
    \item If $S$ admits a \emph{Poincar\'e section} and if for all $n \geq 0$ either
      \[
        \dim CH_{2n}(S) = \dim CH_{2n+1}(S) \quad \text{or} \quad \dim CH_{2n}(S) = \dim CH_{2n-1}(S),
      \]
      with $CH_*(S) \neq 0$, then $S$ contains a periodic orbit.
  \end{enumerate}
\end{thm}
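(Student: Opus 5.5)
This is a classical result quoted from McCord, so the plan is to reconstruct McCord's arguments in the semiflow setting rather than derive anything new.

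\emph{Part (i): a fixed point via the Lefschetz theorem.}

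1. Argue by contradiction and suppose $S$ contains no fixed point.
2. Choose an index pair $(N,L)$ for $S$ with $N/L$ an ANR, e.g.\ a regular index pair, so that Lefschetz fixed point theory applies on the pointed space $(N/L,[L])$.
3. For small $t>0$, define the time-$t$ index map $f_t:N/L\to N/L$ by
\[
f_t(x)=\varphi(t,x) \text{ if } \varphi([0,t],x)\subseteq N\setminus L, \qquad f_t(x)=[L] \text{ otherwise.}
\]
Properties (ii) and (iii) of Definition~\ref{defn:IndexPair} make $f_t$ continuous.
4. $f_t$ is homotopic to the identity through $f_s$, $s\in[0,t]$. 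Hence its Lefschetz number is the Euler characteristic of the reduced homology of $N/L$:
\[
\Lambda(f_t)-1=\sum_k(-1)^k\dim CH_k(S)=(-1)^n\neq 0 .
\]
5. So $f_t$ has a fixed point other than $[L]$. Such a point lies in $N\setminus L$ and satisfies $\varphi(t,x)=x$, i.e.\ it is $t$-periodic.
6. Take $t=1/m$ and let $m\to\infty$. This gives points $x_m\in N\setminus L$ with $\varphi(1/m,x_m)=x_m$. Each full periodic orbit lies in $\cl(N\setminus L)$, hence in $S$ by property (i).
7. By compactness a subsequence converges to some $x^*\in S$. Continuity of $\varphi$ shows $\varphi(s,x^*)=x^*$ for every $s$, since $x_m$ is fixed by $\varphi(k/m,\cdot)$ for every $k$. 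This contradicts the assumption, so $S$ contains a fixed point.

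The delicate point in part (i) is showing that the fixed points of $f_t$ avoid the basepoint and are genuinely periodic in $N$, rather than orbits that leave and re-enter. Properties (ii) and (iii) handle this.

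\emph{Part (ii): a periodic orbit via the Poincaré map.}

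1. Let $\Xi$ be the Poincaré section, so the first-return map $P$ is defined near $S\cap\Xi$ and $S\cap\Xi$ is an isolated invariant set for $P$.
2. Use the suspension and flow-equivalence results: the flow index of $S$ is determined by the discrete Conley index $(H_*(N_P,L_P),\chi)$ of $P$. Concretely, there is a Wang-type long exact sequence
\[
\cdots\to H_k(N_P,L_P)\xrightarrow{\;\chi_k - \mathrm{id}\;} H_k(N_P,L_P)\to CH_{k+1}(S)\to\cdots
\]
3. Use this sequence to show that if $\Lambda(P^m)=0$ for every $m$, then $\dim CH_*$ would have to equal the cokernel and kernel dimensions in a rigid pattern. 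The idea is that the Lefschetz numbers are traces of the powers $\chi^m$, and all of them vanishing forces $\chi$ to be nilpotent on the Leray reduction.
4. A nilpotent $\chi$ gives a trivial index, contradicting $CH_*(S)\neq 0$.
5. Under the stated parity condition $\dim CH_{2n}=\dim CH_{2n\pm1}$, the traces of the $\chi^m$ cannot all cancel. So some $\Lambda(P^m)\neq0$, and the discrete Lefschetz theorem for index maps yields a periodic point of $P$, hence a periodic orbit in $S$.

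This reduction is the main obstacle: translating the dimension condition into the nonvanishing of some $\operatorname{tr}\chi^m$ through the exact sequence. I would follow McCord's argument for that step, treating the existence of the Poincaré section as hypothesis-given.
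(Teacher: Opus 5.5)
The paper does not prove this statement; it quotes it from \cite{McCord1988,McCord1995}. Judged on its own, your outline follows the classical strategy, but in each part the decisive step is either unjustified or false as written.

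\textbf{Part (i).} Step 5 does not follow from step 4. The basepoint $[L]$ is always fixed by $f_t$, so the Lefschetz theorem alone gives nothing new. For $n$ odd, $\Lambda(f_t)=1+(-1)^n=0$. For $n$ even, the value $2$ could a priori be carried entirely by $[L]$.

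What you need is the Lefschetz--Hopf statement that the fixed-point index of $f_t$ at $[L]$ equals $1$. Then the index on the complement of a small neighbourhood $U$ of $[L]$ is $\Lambda(f_t)-1=(-1)^n\neq 0$. Your closing remark puts the difficulty in the wrong place: the index-pair axioms only tell you that fixed points other than $[L]$, \emph{if there are any}, are periodic points in $S$.

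One way to compute this index is as follows.
\begin{enumerate}[(1)]
\item $\Inv^+(\cl(N\setminus L))$ is compact and disjoint from $L$. A point of $L$ whose forward orbit stays in $N$ stays in $L$, so its $\omega$-limit set would be a nonempty invariant subset of $S\cap L=\emptyset$.
\item By compactness, for $T$ large $f_T\equiv[L]$ on a neighbourhood of $[L]$, so $f_T$ has index $1$ there.
\item Every fixed point of $f_s$ other than $[L]$ lies in $S$. Hence the homotopy $f_s$, $s\in[t,T]$, is admissible on any neighbourhood $U$ of $[L]$ with $\cl U\cap S=\emptyset$, and $f_t$ also has index $1$ at $[L]$.
\end{enumerate}
With this in place, your steps 6--7 go through.

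\textbf{Part (ii).} Step 3 is false. The condition $\Lambda(\chi^m)=\sum_k(-1)^k\mathrm{tr}\,\chi_k^m=0$ for all $m$ only says that the nonzero eigenvalues of $\chi$ in even and odd degrees coincide with multiplicity, not that $\chi$ is nilpotent. A counterexample is an attracting invariant torus carrying an irrational linear flow. It has a Poincar\'e section, $\chi_0=\chi_1=\mathrm{id}_\F$, all $\Lambda(P^m)=0$, $CH_*(S)=H_*(T^2)\neq 0$, and no periodic orbit. So steps 3--4 would ``prove'' the theorem without the parity hypothesis, and step 5, the only place the hypothesis enters, is asserted without argument.

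The missing computation runs as follows.
\begin{enumerate}[(1)]
\item Use the correctly indexed Wang sequence $\cdots\to H_k(N_P,L_P)\xrightarrow{\mathrm{id}-\chi_k}H_k(N_P,L_P)\to CH_k(S)\to H_{k-1}(N_P,L_P)\to\cdots$ (yours is shifted by one degree).
\item Since $\dim\ker=\dim\mathrm{coker}$ for endomorphisms of finite-dimensional spaces, $\dim CH_k(S)=a_k+a_{k-1}$ with $a_k=\dim\ker(\mathrm{id}-\chi_k)$.
\item If $\dim CH_{2n}=\dim CH_{2n+1}$ for every $n$, then $a_{2n+1}=a_{2n-1}=\dots=a_{-1}=0$. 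If $\dim CH_{2n}=\dim CH_{2n-1}$ for every $n$, then all even $a_k$ vanish.
\item Since $CH_*(S)\neq 0$, the eigenvalue $1$ occurs in degrees of only one parity. Hence $\prod_k\det(\mathrm{id}-t\chi_k)^{(-1)^{k+1}}$ has a zero or pole at $t=1$ and is not $1$, so some $\Lambda(\chi^m)\neq 0$.
\item The Lefschetz theorem for discrete index maps then gives a periodic point of $P$.
\end{enumerate}

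Two caveats apply. First, this needs the \emph{same} alternative for all $n$. If the alternative is chosen separately for each $n$, as the statement literally reads, then $\chi_1=\chi_4=\mathrm{id}_\F$ gives $\dim CH_*(S)=(0,1,1,0,1,1,0,\dots)$ with every $\Lambda(\chi^m)=0$. Second, the passage from determinants to traces needs characteristic zero. Over $\F_2$, $\chi_0=\mathrm{id}$ on $\F_2^2$ has all traces zero, so over the finite fields the paper allows you must compare with rational coefficients.
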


\begin{defn}
  \label{defn:MorseRepresentation}
  Let $(X,\varphi)$ be a semiflow and let $S$ be a compact invariant set. A \emph{Morse representation} of $S$ is a finite collection
  \[
    \sM = \{M_1, \ldots, M_n\}
  \]
  of mutually disjoint, nonempty, compact invariant subsets of $S$, called \emph{Morse sets}, together with a partial order $\leq$ on $\sM$ such that:
  \begin{enumerate}
    \item for every $x \in S$, there exists $M_i \in \sM$ such that $\omega(x) \subseteq M_i$;
    \item for every full trajectory $\gamma: \R \to S$, there exist $M_i, M_j \in \sM$ with
    \[
      \alpha(\gamma) \subseteq M_j
      \qquad\text{and}\qquad
      \omega(\gamma) \subseteq M_i;
    \]
    \item if $\gamma(\R) \not\subset \bigcup_{k=1}^n M_k$, then $M_i < M_j$.
  \end{enumerate}
\end{defn}
For further background, we refer the reader to \cite{KaliesMischaikowVandervorst2014,KaliesMischaikowVandervorst2016,KaliesMischaikowVandervorst2022,Handbook2002}.

\subsection{Hybrid Dynamical Systems}
\label{subsec:hds}
We now introduce the framework for hybrid dynamical systems used in this manuscript. We follow, loosely, the notation of impulsive dynamical systems as in \cite{Bonotto2024}.
\begin{defn}
  \label{defn:HybridSystem}
  Let $X$ be a compact metric space. A \emph{hybrid dynamical system} (HDS) in $X$ is a tuple $\cH = (X,\varphi,G,r)$ consisting of:
  \begin{enumerate}[(i)]
    \item a local semiflow $(\Rp,X,\varphi)$,
    \item a closed set $G \subseteq X$ called the \emph{guard set}, and
    \item a continuous function $r : G \to X$ called the \emph{reset map}.
  \end{enumerate}
\end{defn}
The dynamics are deterministic and governed by the local semiflow $\varphi$ until the trajectory hits the guard set $G$, at which point $r$ resets it instantly.
\begin{defn}
  \label{defn:ImpactTime}
  Let $\cH=(X,\varphi,G,r)$ be an HDS with local semiflow $\varphi : \cU \to X$. The \emph{impact time} $\stoptime : X \to [0,\infty]$ is defined by
  \begin{equation}
    \label{eq:ImpactTime}
    \stoptime(x) = \inf \setdef{ t \geq 0 }{ (t,x) \in \cU \text{ and } \varphi(t,x) \in G }.
  \end{equation}
  If the set is empty, we set $\stoptime(x) = \infty$. Note that $\stoptime(x) = 0$ for all $x \in G$.
\end{defn}
\begin{ex}[Bouncing ball]
  \label{ex:BouncingBall}
  Consider a point mass moving vertically under the influence of gravity bouncing off a fixed horizontal surface. The hybrid system $\cH = (X, \varphi, G, r)$ can be defined as follows
  \begin{itemize}
    \item $X = \setdef{ (x,y) \in \mathbb{R}^2 }{ x \geq 0 }$,
    \item $\varphi$ is the local semiflow generated by $(\dot{x},\dot{y})=(y,-g)$, for some $g>0$,
    \item $G = \setdef{ (x,y) \in X }{ x = 0, y \leq 0}$,
    \item $r(0,y) = (0,-cy)$, where $c \in [0,1)$ is the coefficient of restitution.
  \end{itemize}
  Note that for each $(x,y) \in X$ and $t \in [0,\stoptime(x,y)]$,
  \[
    \varphi(t,(x,y)) = \left( x + y t - \frac{1}{2}gt^2, y - gt \right) \text{ and } \stoptime(x,y) = \frac{y + \sqrt{y^2+2xg}}{g},
  \]
  where $\varphi(\stoptime(x,y),(x,y)) = (0,-\sqrt{y^2+2xg})$.
  \begin{figure}[ht]
    \centering
    \begin{tikzpicture}
      \draw[->, thick] (-0.5, 0) -- (4, 0) node[below right] {$x$ (position)};
      \draw[->, thick] (0, -2.5) -- (0, 2.5) node[below left] {$y$ (velocity)};
      \draw[line width=2pt, red] (0,0) -- (0,-2.5) node[left, black] {$G$};
      \draw[thick, midarrow=0.5] (0, 2.2) .. controls (3.5, 2.2) and (3.5, -2.2) .. (0, -2.2);
      \node[above right] at (2.5, 1.5) {$\varphi$};
      \draw[thick, dashed, midarrow=0.5] (0, -2.2) .. controls (-1.5, -1) and (-1.5, 1) .. (0, 1.5);
      \draw[thick,midarrow=0.5] (0, 1.5) .. controls (2, 1.5) and (2, -1.5) .. (0, -1.5);
      \draw[thick, dashed, midarrow=0.5] (0, -1.5) .. controls (-0.6, -1) and (-0.6, 1) .. (0, 0.75);
      \node[above left] at (-1.1, 0) {$r$};
    \end{tikzpicture}
    \caption{Hybrid dynamical system representing the bouncing ball. Solid lines represent the flow $\varphi$, and dashed lines represent the reset map $r$ upon hitting the guard set $G$ in red.}
    \label{fig:bouncing_ball_traj}
  \end{figure}
\end{ex}
\begin{defn}
  \label{defn:HybridTrajectory}
  Let $\cH=(X,\varphi,G,r)$ be a hybrid system. For each $x \in X$, the \emph{hybrid trajectory} of $x$ is a collection of continuous functions $(\gamma_n \colon I_n \to X)_{n=0}^J$, $J \in \Zp\cup\setof{\infty}$ that satisfy
  \begin{enumerate}[(i)]
    \item If $J < \infty$, then $I_n = [t_n,t_{n+1}]$ for $n = 0,\ldots,J-1$ and $I_J = [t_J,t_{J+1})$, where $t_{J+1} \in (t_J,\infty]$. If $J = \infty$, then $I_n = [t_n,t_{n+1}]$ for all $n \geq 0$.
    \item $\gamma_0(0) = x$, $t_0 = 0$, and $t_{n+1}=t_n+\stoptime(\gamma_n(t_n))$,
    \item $\gamma_{n+1}(t_{n+1})=r(\gamma_{n}(t_{n+1}))$ for all $n\geq 0$,
    \item if $t_n < t_{n+1}$, then
      \[
        \gamma_n(t) = \varphi(t-t_n,\gamma_n(t_n)), \text{ for all } t \in [t_n,t_{n+1}).
      \]
  \end{enumerate}
  A trajectory is \emph{Zeno} if $J=\infty$ and $t_n \to T < \infty$ as $n \to \infty$. In this case, the trajectory is not defined at the Zeno time $T$.
\end{defn}

\begin{rem}  \label{rem:NonBlocking}
  We assume throughout that $\cH$ is \emph{non-blocking}, that is, every trajectory is defined either for all $t \geq 0$ with finitely or infinitely many jumps or is a Zeno trajectory where $t_n \to T < \infty$.
\end{rem}
In the spirit of connecting the notation to the classical theory, we shall denote the hybrid trajectory of $x$ by $\psi_x$ as in
\begin{align*}
  \psi_x \colon \dom(\psi_x) \coloneqq \bigcup_{n=0}^J (I_n \times\setof{n}) & \to X \\
  \psi_x(t,n) & = \gamma_n(t).
\end{align*}
In a slight abuse of notation, we write its image as $\psi([0,\infty),x)\coloneqq\psi_x(\dom(\psi_x))$.

\begin{defn}
  \label{defn:BackwardHybridTrajectory}
  A \emph{backward hybrid trajectory} through $x \in X$ is a collection of continuous functions $(\gamma_n : I_n \to X)_{n=0}^{-J}$, $J \in \Zp\cup\setof{\infty}$ that satisfy
  \begin{enumerate}[(i)]
    \item $I_n = [t_{n-1},t_n]$ for $n = 0,-1,\ldots,-J+1$ and $I_{-J} = (t_{-J-1},t_{-J}]$,
    \item $\gamma_0(0) = x$, $t_0 = 0$, with $t_0 > t_{-1} > t_{-2} > \cdots$,
    \item $\gamma_n(t) \notin G$ for $t \in (t_{n-1}, t_n)$, and $\gamma_n(t_n) \in G$ with $r(\gamma_n(t_n)) = \gamma_{n+1}(t_n)$ for all $n < 0$,
    \item if $t_{n-1} < t_n$, then
      \[
        \gamma_n(t) = \varphi(t-t_{n-1},\gamma_n(t_{n-1})), \text{ for all } t \in (t_{n-1},t_n].
      \]
  \end{enumerate}
\end{defn}
Similarly, we write $\beta_x$ with $\dom(\beta_x) = \bigcup_{n=-J}^0 (I_n \times\setof{n})$, $\beta_x(t,n) = \gamma_n(t)$. We denote by $\beta((-\infty,0],x)=\beta_x(\dom(\beta_x))$ the image of the backward hybrid trajectory through $x$.
\begin{defn}
  \label{defn:FullHybridTrajectory}
  A \emph{full hybrid trajectory} through $x \in X$ is a collection $(\gamma_n : [t_n, t_{n+1}] \to X)_{n=-J^-}^{J^+}$ with $t_0 \leq 0 \leq t_1$ and $\gamma_0(0) = x$, such that $(\gamma_0|_{[0,t_1]}, \gamma_1, \ldots, \gamma_{J^+})$ is a hybrid trajectory through $x$ and $(\gamma_0|_{[t_0,0]}, \gamma_{-1}, \ldots, \gamma_{-J^-})$ is a backward hybrid trajectory through $x$.
\end{defn}
We define invariant sets and isolated invariant sets in the hybrid context analogous to the classical setting.
\begin{defn}
  \label{defn:HybridInvariantSet}
  Let $\cH=(X,\varphi,G,r)$ be an HDS. A set $S$ is \emph{forward invariant} if $S = \Inv^+(S,\cH)$ where
  \[
    \Inv^+(S,\cH) \coloneqq \setdef{x \in X}{\psi([0,\infty),x) \subseteq S}.
  \]
  Similarly, a set $S$ is \emph{backward invariant} if $S = \Inv^-(S,\cH)$ where
  \[
    \Inv^-(S,\cH) \coloneqq \setdef{x \in X}{\beta((-\infty,0],x) \subseteq S \text{ for some backward hybrid trajectory } \beta_x}.
  \]
  We say that a set is \emph{invariant} if it is both forward and backward invariant. The \emph{maximal invariant set} in $K \subseteq X$ is $\Inv(K,\cH) \coloneqq \Inv^+(K,\cH) \cap \Inv^-(K,\cH)$. A compact set $K$ is an \emph{isolating neighborhood} under $\cH$ if $S = \Inv(K,\cH) \subseteq \Int(K)$. In that case, $S$ is an \emph{isolated invariant set} for $\cH$.
\end{defn}

%% file: sections/suspension.tex
\section{Hybrid Suspension Semiflow}
\label{sec:suspension}

In this section, we review the construction of the hybrid suspension semiflow introduced in \cite{Kvalheim2021}. This construction provides the foundation to define the hybrid Conley index. We establish the correspondence between invariant sets in the hybrid domain and the suspension, and prove that the property of isolation is preserved under this correspondence.

First, recall that a \emph{strong deformation retract} of $U$ onto $G$ is a continuous map $H: U \times [0,1] \to U$ such that $H(x,0) = x$ for all $x \in U$, $H(x,1) \in G$ for all $x \in U$, and $H(g,s) = g$ for all $g \in G$ and $s \in [0,1]$.
\begin{defn}[\cite{Kvalheim2021}]
  \label{defn:TrappingGuard}
  Let $\cH = (X,\varphi,G,r)$ be an HDS.
  We say that $\cH$ satisfies the \emph{Trapping Guard Condition} (TGC) if there exists an open neighborhood $U \subseteq X$ of $G$ such that the \emph{impact time} $\stoptime|_U$ is continuous on $U$ and the local semiflow $\varphi$ induces a strong deformation retract of $U$ onto $G$. That is,  $H(x,s) = \varphi(s \cdot \stoptime(x), x)$  flows each point $x \in U$ toward its impact point on $G$ as $s\to 1$.
\end{defn}

\subsection{Construction of the Hybrid Suspension}
The core idea of the hybrid suspension is to relax the reset map $r: G \to X$ along a mapping cylinder before the identification with its image, effectively extending the jump over one unit of time before the quotient. This is done in two steps: the first one constructs the relaxed hybrid system and the second constructs a quotient space. The resulting quotient space is analogous to the hybrifold \cite{Hybrifold} of the relaxed hybrid system.
\begin{defn}[\cite{Kvalheim2021}]
  Let $\cH = (X,\varphi,G,r)$ be an HDS satisfying the TGC. The \emph{relaxed hybrid system} $\cH' = (X',\varphi',G',r')$ is defined as follows:
  \begin{itemize}
    \item The space $\displaystyle X' = \frac{X \sqcup (G \times [0,1] )}{x \sim (x,0)}$ is the mapping cylinder of the inclusion map $G \hookrightarrow X$. Let $\iota : X \hookrightarrow X'$ be the natural inclusion.
    \item The local semiflow $\varphi'$ on $X'$ extends $\varphi$ via $\dom(\varphi') = \setdef{(t,z)}{t \leq \stoptime'(z)}$, where $\stoptime'(\iota(x)) = \stoptime(x) + 1$ for $x \in X$ and $\stoptime'(x,s) = 1-s$ for $(x,s) \in G \times (0,1]$, and it is defined by
      \[
        \varphi'(t,z) =
        \begin{cases}
          \iota(\varphi(t,x)) & \text{if } z=\iota(x),\, x \notin G,\, t < \stoptime(x),\\
          (\varphi(\stoptime(x),x), t-\stoptime(x)) & \text{if } z=\iota(x),\, x \notin G,\, \stoptime(x) \leq t \leq \stoptime'(z),\\
          (x,s+t) & \text{if } z=(x,s) \in G \times [0,1],\, s + t \leq 1.
        \end{cases}
      \]
    \item The guard set is $G' = G \times \{1\}$.
    \item The reset map $r' : G' \to X'$ is defined by $r'(x,1)=\iota(r(x))$.
  \end{itemize}
\end{defn}
Note that $\varphi'$ is continuous since it concatenates $\varphi$ with the gradient-like semiflow along the cylinder. Now, by identifying the guard $G'$ with its image under $r'$, we obtain the suspension space and a continuous semiflow.
\begin{defn}[\cite{Kvalheim2021}]
  \label{defn:SuspensionSemiflow}
  Let $\cH$ be an HDS satisfying the TGC, and let $\cH'$ be the relaxed hybrid system. The \emph{hybrid suspension space} $\susp X$ is the quotient space $X'/\sim$, where $z' \sim r'(z')$ for $z' \in G'$.
  The \emph{hybrid suspension semiflow} $\Phi_\cH : \Rp \times \susp X \to \susp X$ is the map induced on the quotient by $\varphi'$:
  \[
    \Phi_\cH(t,[z']) = [\varphi'(t,z')].
  \]
\end{defn}
Conceptually, the suspension space is the result of the mapping torus construction applied to the reset map and can be visualized in Figure~\ref{fig:hybridsuspension}. Since $X$ is a compact metric space, $G\subset X$ is closed, $r$ is continuous, and $\susp X$ is a compact metric space \cite{Kvalheim2021}.
\[
  \susp X \cong \frac{X \sqcup (G \times [0,1]) } {(x,0)\sim x, (x,1)\sim r(x)}.
\]

\begin{figure}[H]
  \centering
  \begin{subfigure}{0.32\textwidth}
    \adjustbox{max width=\linewidth}{%
    \begin{tikzpicture}
      \coordinate (a) at (0,0);
      \coordinate (b) at (4,0);
      \coordinate (c) at (1,2);
      \coordinate (d) at (5,2);

      \coordinate (center) at (3.2,1.1);
      \coordinate (right_corner) at ($(center) + ({0.8*cos(15)}, {0.8*sin(15)})$);
      \coordinate (left_corner) at ($(center) + ({0.8*cos(15+180)}, {0.8*sin(15+180)})$);
      \coordinate (top_corner) at ($(center) + ({0.4*cos(15+90)}, {0.4*sin(15+90)})$);
      \coordinate (bottom_corner) at ($(center) + ({0.4*cos(15+270)}, {0.4*sin(15+270)})$);

      \draw[thick,black] (a) .. controls ($(a)!0.33!(b) + (0,-0.5)$) and ($(a)!0.67!(b) + (0,0.5)$) .. (b);
      \draw[thick,black] (c) .. controls ($(c)!0.33!(d) + (0,-0.5)$) and ($(c)!0.67!(d) + (0,0.5)$) .. (d);
      \draw[thick,black] (a) -- (c);
      \draw[thick,black] (b) -- (d);

      \fill[gray!30] (center) ellipse[x radius=0.8, y radius=0.4, rotate=15];
      \draw[thick,black] (center) ellipse[x radius=0.8, y radius=0.4, rotate=15];
      \node[right] at (3,1.2) {$G$};

      \draw[thick,midarrow=0.5,blue] (1.5,0.65) .. controls (2,0.2) and (2.5,0.5) .. (3,0.65) node[midway,below,blue] {$\varphi$};
      \draw[thick,dashed,red,midarrow=0.5] (3,0.65) -- (1.5,0.65);
      \node[red] at (2,1) {$r$};
    \end{tikzpicture}}
    \caption{$\cH = (X,\varphi,G,r)$}
  \end{subfigure}
  \begin{subfigure}{0.32\textwidth}
    \adjustbox{max width=\linewidth}{%
    \begin{tikzpicture}
      \coordinate (a) at (0,0);
      \coordinate (b) at (4,0);
      \coordinate (c) at (1,2);
      \coordinate (d) at (5,2);

      \coordinate (center) at (3.2,1.1);
      \coordinate (right_corner) at ($(center) + ({0.8*cos(15)}, {0.8*sin(15)})$);
      \coordinate (left_corner) at ($(center) + ({0.8*cos(15+180)}, {0.8*sin(15+180)})$);
      \coordinate (top_corner) at ($(center) + ({0.4*cos(15+90)}, {0.4*sin(15+90)})$);
      \coordinate (bottom_corner) at ($(center) + ({0.4*cos(15+270)}, {0.4*sin(15+270)})$);

      \draw[thick,black] (a) .. controls ($(a)!0.33!(b) + (0,-0.5)$) and ($(a)!0.67!(b) + (0,0.5)$) .. (b);
      \draw[thick,black] (c) .. controls ($(c)!0.33!(d) + (0,-0.5)$) and ($(c)!0.67!(d) + (0,0.5)$) .. (d);
      \draw[thick,black] (a) -- (c);
      \draw[thick,black] (b) -- (d);

      \fill[gray!10] (center) ellipse[x radius=0.8, y radius=0.4, rotate=15];
      \draw[thick,black] (center) ellipse[x radius=0.8, y radius=0.4, rotate=15];
      \fill[gray!30] ($(center)+(0,2)$) ellipse[x radius=0.8, y radius=0.4, rotate=15];
      \draw[thick,black] ($(center)+(0,2)$) ellipse[x radius=0.8, y radius=0.4, rotate=15];

      \node[right] at (3,3.2) {$G'$};
      \draw[thick,black] (left_corner) -- ($(left_corner)+(0,2)$);
      \draw[thick,black] (right_corner) -- ($(right_corner)+(0,2)$);

      \draw[thick,midarrow=0.5,blue] (1.5,0.65) .. controls (2,0.2) and (2.5,0.5) .. (3,0.65) node[midway,below,blue] {$\varphi$};
      \draw[thick,midarrow=0.5,blue] (3,0.65) -- (3,2.65);
      \draw[thick,dashed,red,midarrow=0.5] (3,2.65) -- (1.5,0.65);
      \node[red] at (2,2) {$r$};
    \end{tikzpicture}}
    \caption{$\cH' = (X',\varphi',G',r')$}
  \end{subfigure}
  \begin{subfigure}{0.32\textwidth}
    \adjustbox{max width=\linewidth}{%
    \begin{tikzpicture}
      \coordinate (a) at (0,0);
      \coordinate (b) at (4,0);
      \coordinate (c) at (1,2);
      \coordinate (d) at (5,2);

      \coordinate (center) at (3.2,1.1);
      \coordinate (right_corner) at ($(center) + ({0.8*cos(15)}, {0.8*sin(15)})$);
      \coordinate (left_corner) at ($(center) + ({0.8*cos(15+180)}, {0.8*sin(15+180)})$);
      \coordinate (top_corner) at ($(center) + ({0.4*cos(15+90)}, {0.4*sin(15+90)})$);
      \coordinate (bottom_corner) at ($(center) + ({0.4*cos(15+270)}, {0.4*sin(15+270)})$);

      \draw[thick,black] (a) .. controls ($(a)!0.33!(b) + (0,-0.5)$) and ($(a)!0.67!(b) + (0,0.5)$) .. (b);
      \draw[thick,black] (c) .. controls ($(c)!0.33!(d) + (0,-0.5)$) and ($(c)!0.67!(d) + (0,0.5)$) .. (d);
      \draw[thick,black] (a) -- (c);
      \draw[thick,black] (b) -- (d);

      \fill[gray!30] (center) ellipse[x radius=0.8, y radius=0.4, rotate=15];
      \draw[thick,black] (center) ellipse[x radius=0.8, y radius=0.4, rotate=15];
      \node[right] at (3,1.2) {$G$};
      \draw[thick,black] (left_corner) .. controls ($(left_corner)+2*(0,1)$) and ($(left_corner)+2*(-0.5,0.86)$) .. (left_corner);
      \draw[thick,black] (right_corner) .. controls ($(right_corner)+2*(0,1)$) and ($(1.5,4)$) .. (1,1);

      \draw[thick,black,dashed] ($(1,1)!0.5!(left_corner)$) ellipse[x radius=0.716, y radius=0.3, rotate=-4.3];
      \fill[gray!10] ($(1,1)!0.5!(left_corner)$) ellipse[x radius=0.716, y radius=0.3, rotate=-4.3];

      \draw[thick,midarrow=0.5,blue] (1.5,0.65) .. controls (2,0.2) and (2.5,0.5) .. (3,0.65) node[right,below,blue] {$\Phi_\cH$};
      \draw[thick,dashed,blue,midarrow=0.5] (3,0.65) .. controls (2,4.5) and (1.5,1.5) .. (1.5,0.65) -- (1.5,0.65);
    \end{tikzpicture}}
    \caption{$(\susp X,\Phi_\cH)$}
  \end{subfigure}
  \caption{Hybrid suspension semiflow schematic.}
  \label{fig:hybridsuspension}
\end{figure}
\begin{thm}[\cite{Kvalheim2021}, Prop 6.9]
  \label{thm:hybridsemiflow}
  $\cH$ satisfies TGC if and only if $(\susp X, \Phi_\cH)$ is a semiflow.
\end{thm}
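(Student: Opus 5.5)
We prove the two implications separately. Throughout, the map $\Phi_\cH$ is the one induced on $\susp X = X'/\sim$ by $\varphi'$, composed with the reset identifications. Concretely, a point $[z']$ flows under $\varphi'$ until it reaches $G' = G\times\{1\}$, is identified with $\iota(r(x))$, and continues from there. By the non-blocking assumption (Remark~\ref{rem:NonBlocking}) and the fact that every jump now costs at least one unit of time along the cylinder, Zeno behavior is excluded in $\susp X$. Hence $\Phi_\cH(t,\cdot)$ is defined for all $t\ge 0$. The semigroup property is inherited from that of $\varphi'$ together with the identification $z'\sim r'(z')$, since the concatenation is done consistently at the guard. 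The real content is therefore \emph{continuity} of $\Phi_\cH$ on $\Rp\times\susp X$.

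For ``TGC $\Rightarrow$ semiflow'', we first show that $\varphi'$ is continuous on its domain $\setdef{(t,z)}{t\le \stoptime'(z)}$, and that $\stoptime'$ is continuous on the open set $\iota(U)\cup (G\times(0,1])$.
\begin{enumerate}[(i)]
  \item Away from $\iota(U)$, the impact time is bounded below, so locally $\varphi'=\iota\circ\varphi$.
  \item On $\iota(U)$, the formula piecing $\varphi(t,x)$ with $(\varphi(\stoptime(x),x),t-\stoptime(x))$ is continuous. This uses continuity of $\stoptime|_U$ and the fact that in $X'$ the point $(g,0)$ is identified with $g$, so the two cases agree on the seam $t=\stoptime(x)$.
  \item The strong deformation retraction $H$ is what guarantees that points of $U$ near $G$ have small impact time and land near their limit. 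This gives continuity of $\stoptime'$ at points of $G=G\times\{0\}$.
\end{enumerate}
Next we pass to the quotient. Since $\susp X$ is a compact metric space and the quotient map $q:X'\to\susp X$ is closed, it suffices to show that $\Phi_\cH\circ(\mathrm{id}\times q)$ is continuous on $\Rp\times X'$. For a fixed horizon $[0,T]$, each orbit undergoes at most $\lceil T\rceil+1$ resets, because each reset requires one unit of cylinder time. We therefore write $\Phi_\cH(t,[z'])$ as a finite composition of continuous pieces $\varphi'$ and $r'$. Continuity at the reset times follows from continuity of $\stoptime'$ near $G'$: a neighbor $w$ of $z$ reaches $G'$ at a nearby time, and by continuity of $r$ it is reset to a nearby point. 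Points that reach $G'$ slightly before versus slightly after time $t$ are reconciled precisely by the identification $q$.

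For the converse, assume $\Phi_\cH$ is a continuous semiflow and take $U$ to be the set of points of $X\setminus G$ that flow into the cylinder $G\times[0,1/2)$ within finite time, together with $G$ itself. Here we use continuity of $\Phi_\cH$ and the fact that $G\times(0,1/2)$ is open in $\susp X$. The first time $\Phi_\cH(\cdot,\iota(x))$ enters the cylinder equals $\stoptime(x)$. Continuity of $\stoptime$ on $U$ would then follow from the following observation: the slice $G\times\{1/4\}$ is a local section that orbits cross transversally, with cylinder coordinate increasing at unit speed. Consequently the hitting time $\stoptime(x)+1/4$ of that slice is continuous, by continuity of $\Phi_\cH$ and openness of the cylinder interior. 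The homotopy $H(x,s)=\varphi(s\stoptime(x),x)$ is then continuous and is a strong deformation retraction onto $G$. Since the statement is quoted from \cite{Kvalheim2021}, Prop.~6.9, we could alternatively cite it directly, but the sketch above is the plan for a self-contained proof.

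The main obstacle is continuity at points of $G$ itself, that is, at $\iota(g)=(g,0)$. Nearby points $x\notin G$ must have $\stoptime(x)\to 0$ and must enter the cylinder near $g$. Without TGC, orbits could graze $G$ or miss it entirely, and continuity would fail. We would handle this by using $H$ to bound $\stoptime$ uniformly on small neighborhoods of $g$ and by invoking continuity of $\stoptime|_U$ at $g$, where $\stoptime(g)=0$.
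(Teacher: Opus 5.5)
The paper gives no argument for this statement; it is quoted from \cite{Kvalheim2021}, Prop.~6.9, so your self-contained sketch is necessarily a different route. In outline it is the standard argument and it is sound, but it misplaces the crux of the forward direction.

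\emph{Forward direction.} Continuity of $\stoptime$ at points of $G$ is not an obstacle: it is literally the TGC hypothesis, and the deformation-retraction clause is not needed in this direction at all. What matters is that nearby orbits reach $G$ at nearby times. Your step~(i) only gives $\varphi'=\iota\circ\varphi$ for times below a uniform bound. So the later claim that ``a neighbor $w$ of $z$ reaches $G'$ at a nearby time'' is not yet justified for base points outside $\iota(U)$. Either of two standard repairs closes this.
\begin{enumerate}[(a)]
  \item Globally: lower semicontinuity of $\stoptime$ comes from closedness of $G$, continuity of $\varphi$, and non-blocking. Upper semicontinuity follows from $\stoptime(x')\le s+\stoptime(\varphi(s,x'))$, where $\varphi(s,x)\in U$, together with continuity of $\stoptime|_U$.
  \item More cleanly: prove continuity of $\Phi_\cH$ on $[0,\varepsilon]\times\susp X$ for small $\varepsilon$. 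There at most one reset occurs, and your (i)--(iii) plus the pasting lemma along $s+t=1$ suffice. Then extend to all $t$ by the semigroup property instead of counting resets.
\end{enumerate}

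\emph{Converse.} You implicitly extend the construction of $\cH'$ and $\Phi_\cH$, which the paper defines only under TGC, to arbitrary non-blocking $\cH$. That extension is what the ``if and only if'' requires, and granting it your argument works. Your $U$ as worded is $\setdef{x}{\stoptime(x)<\infty}$, which is open as a union of preimages of the open cylinder interior; $\setdef{x}{\stoptime(x)<1/2}$ works equally well. Continuity of the slice hitting time has two halves. Lower semicontinuity holds because the slice is closed. Upper semicontinuity holds because the open cylinder is a flow box in which height increases at unit speed.

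\emph{Comparison.} The citation buys the paper brevity and the careful quotient-topology work of \cite{Kvalheim2021}. Your route makes visible that the forward direction uses only continuity of $\stoptime$ near $G$. The full TGC, retraction included, is then recovered for free in the converse.
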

Throughout the manuscript, we use the following maps relating these spaces. Let $\pi_0 : X' \to X$ be the projection map that collapses the cylinder and let $\pi: X' \to \susp X$ be the quotient map from Definition~\ref{defn:SuspensionSemiflow}. We denote by
\[
  \incl = \pi\circ\iota : X \to \susp X
\]
the inclusion into the suspension, and define
\[
  \susp: \cP(X) \to \cP(\susp X), \quad \susp(A) = \pi(\pi_0^{-1}(A))
\]
the correspondence that associates a subset of $X$ with its suspension in $\susp X$. We write simply $\susp(x)$ for singletons. This relationship is illustrated below.
\[
  \begin{tikzcd}
    {X'} && {\cP(X')} \\
    X & {\susp X} & {\cP(X)} & {\cP(\susp X)}
    \arrow["{{{{{\pi_0}}}}}", from=1-1, to=2-1]
    \arrow["\pi", from=1-1, to=2-2]
    \arrow["\pi"', from=1-3, to=2-4]
    \arrow["\iota", curve={height=-6pt}, from=2-1, to=1-1]
    \arrow["{{{{\incl}}}}"', dashed, from=2-1, to=2-2]
    \arrow["{{\pi_0^{-1}}}", from=2-3, to=1-3]
    \arrow["{{{{{\susp}}}}}"', dashed, from=2-3, to=2-4]
\end{tikzcd}\]

\subsection{Set-theoretical Properties}
In order to analyze invariant sets, we characterize the images and preimages of sets under the suspension and inclusion maps.
\begin{lem}
  \label{lem:InclInvSusp}
  For any $A \subset X$, $\incl\inv(\susp A) = A \cup r(A \cap G)$.
\end{lem}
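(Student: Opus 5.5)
We prove the two inclusions by unwinding the definitions of $\pi_0$, $\pi$ and $\incl = \pi\circ\iota$. First we describe the equivalence classes of $\pi$ explicitly. The relation on $X'$ is generated by $(g,1)\sim\iota(r(g))$ for $g\in G$. Two distinct points of $X'$ are therefore identified exactly when they lie in a common fiber $\{\iota(y)\}\cup\setdef{(g,1)}{g\in G,\ r(g)=y}$ for some $y\in X$. Points of $G\times(0,1)$ are only equivalent to themselves. Note that $(g,0)$ is already $\iota(g)$ in the mapping cylinder. Since $\pi$ does not chain further (points $(g,1)$ are never related to other points of the cylinder except via $\iota(r(g))$), the class of $\iota(y)$ is $\{\iota(y)\}\cup (r^{-1}(y)\times\{1\})$.

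Next we compute $\pi_0^{-1}(A)$. Since $\pi_0$ collapses $(g,s)\mapsto g$ and $\pi_0\circ\iota=\mathrm{id}_X$, we have $\pi_0^{-1}(A)=\iota(A)\cup\big((A\cap G)\times[0,1]\big)$.

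Then we identify $x\in\incl^{-1}(\susp A)$. This holds iff $\pi(\iota(x))=\pi(z)$ for some $z\in\pi_0^{-1}(A)$, i.e.\ iff the class of $\iota(x)$, namely $\{\iota(x)\}\cup(r^{-1}(x)\times\{1\})$, meets $\iota(A)\cup((A\cap G)\times[0,1])$. The intersection with $\iota(A)$ is nonempty iff $x\in A$. Here we use that $\iota$ is injective, and that $\iota(x)$ coincides with no point $(g,s)$ with $s>0$. It also covers the case $x\in A\cap G$, since $\iota(x)=(x,0)$. The intersection of $r^{-1}(x)\times\{1\}$ with $(A\cap G)\times[0,1]$ is nonempty iff some $g\in A\cap G$ has $r(g)=x$, i.e.\ $x\in r(A\cap G)$. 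A point $(g,s)$ with $s\in(0,1)$ is never in the class of $\iota(x)$, so no other contributions arise. This gives both inclusions at once.

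The main obstacle is the first step: justifying rigorously that the equivalence relation generated by $z'\sim r'(z')$ has no longer chains. One might worry about a chain $(g,1)\sim\iota(r(g))$ where $r(g)\in G$ and $\iota(r(g))=(r(g),0)$, which is then related to further points. This is not a problem. $(r(g),0)$ is in the class of $\iota(r(g))$, and it is related only to the $(g',1)$ with $r(g')=r(g)$; the level-$0$ point $(r(g),0)$ is not itself related to $(r(g),1)$. So the generating relation is "star-shaped" around each $\iota(y)$, and the transitive closure adds nothing beyond the fibers described above. I would state this as a short preliminary observation before the computation.
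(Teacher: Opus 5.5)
Your proof is correct and follows the same route as the paper's: compute $\pi_0^{-1}(A)=\iota(A)\cup((A\cap G)\times(0,1])$, then determine which of its points can be identified with $\iota(x)$ under $\pi$, using that the only nontrivial identifications are $(g,1)\sim\iota(r(g))$. Your explicit description of the $\pi$-classes as stars $\{\iota(y)\}\cup(r^{-1}(y)\times\{1\})$ makes rigorous a step the paper leaves implicit, namely that $\pi(\iota(x))=\pi(\iota(a))$ forces $x=a$ and $\pi(\iota(x))=\pi(g,1)$ forces $x=r(g)$.
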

\begin{proof}
  Note that $x \in \incl\inv(\susp A)$ if and only if $\pi(x,0) = \pi(x')$ for some $x' \in \pi_0\inv(A) = \iota(A) \cup ((A\cap G) \times (0,1])$. Since $\pi(G \times (0,1)) \cap \incl(X) = \emptyset$, either $x' \in \iota(A)$ or $x' \in (A \cap G) \times \{1\}$, which yields $x \in A$ or $x = r(g)$ for some $g \in A \cap G$.
\end{proof}

\begin{lem}
  \label{lem:SuspInclInv}
  For any $B \subset \susp X$,
  \[
    \susp(\incl\inv(B)) = \left( B \cap \incl(X) \right) \cup \pi\left( \incl\inv\left( B \cap \incl(G) \right) \times (0,1] \right).
  \]
\end{lem}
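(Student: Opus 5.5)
The argument is a direct set-theoretic computation. Put $A = \incl\inv(B) \subseteq X$ and recall that $\pi_0\inv(A) = \iota(A) \cup \big((A\cap G)\times(0,1]\big)$. Then
\[
  \susp(A) = \pi(\iota(A)) \cup \pi\big((A\cap G)\times(0,1]\big) = \incl(A) \cup \pi\big((A \cap G)\times(0,1]\big).
\]
It remains to identify each piece with the corresponding piece on the right-hand side of the claimed formula.

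For the first piece, $\incl(\incl\inv(B)) = B \cap \incl(X)$. This holds for any map: the image of a preimage is the intersection with the image.

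For the second piece, I will show that $A \cap G = \incl\inv(B\cap \incl(G))$. If $g \in A\cap G$, then $\incl(g) \in B$ and $\incl(g)\in\incl(G)$, so $g$ lies in the right-hand set. The reverse inclusion needs care, because $\incl$ is not injective on $X$: the quotient identifies $\iota(r(g))$ with $\pi(g,1)$, but it identifies points of $\iota(X)$ only through $G\times\{1\}$. So two distinct points $x, y \in X$ never satisfy $\incl(x)=\incl(y)$ unless that identification happens through the reset.

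This is the step I expect to be the main obstacle. The right-hand side of the lemma writes $\incl\inv(B\cap\incl(G))\times(0,1]$, which is only meaningful as a subset of $G\times(0,1]$. So I will read it as the intersection of that preimage with $G$. That preimage is $\{x \in X : \incl(x)\in B,\ \incl(x) = \incl(g) \text{ for some } g\in G\}$, and intersected with $G$ it is exactly $\{g\in G: \incl(g)\in B\} = A\cap G$. The plan is to argue that $\incl$ restricted to $X$ is injective. The reason is that $\pi$ only glues $G\times\{1\}$ to $\iota(r(G))$, while $\iota(X)$ meets $G\times\{1\}$ nowhere, since $\iota(x) = (x,0)$. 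Hence $\incl(x)=\incl(g)$ forces $x=g$. This injectivity also gives $\incl\inv(\incl(G)) = G$, which removes the ambiguity in the reading above.

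Combining the two pieces gives $\susp(\incl\inv(B)) = (B\cap\incl(X)) \cup \pi\big(\incl\inv(B\cap\incl(G))\times(0,1]\big)$, which is the statement.
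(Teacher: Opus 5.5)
Your proof is correct and follows essentially the same route as the paper. Both arguments split $\pi_0\inv(\incl\inv(B))$ into $\iota(\incl\inv(B))$ and the cylinder part $(\incl\inv(B)\cap G)\times(0,1]$, then match each piece with the corresponding term on the right: the paper chases elements in each direction, while you work at the level of sets using $\incl(\incl\inv(B)) = B\cap\incl(X)$. Your explicit check that $\incl$ is injective, which gives $\incl\inv(B\cap\incl(G)) = \incl\inv(B)\cap G \subseteq G$, settles a point the paper's ($\supseteq$) direction uses only implicitly when it writes $\pi(g,s)$ with $g\in\incl\inv(B\cap\incl(G))$.
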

\begin{proof}
  By definition, $x \in \incl\inv(B)$ if and only if $\incl(x) \in B$, that is, $\pi(x,0) \in B$.

  ($\subseteq$) Let $\tilde{x} \in \susp(\incl\inv(B))$. Then $\tilde{x} = \pi(x')$ for some $x' \in \pi_0\inv(\incl\inv(B))$. Since $\pi_0\inv(\incl\inv(B)) = \iota(\incl\inv(B)) \cup ((\incl\inv(B) \cap G) \times (0,1])$, either $x' = (x,0)$ for some $x \in \incl\inv(B)$, or $x' = (g,s)$ for some $g \in \incl\inv(B) \cap G$ and $s \in (0,1]$. If the former, then $\tilde{x} = \pi(x,0) = \incl(x) \in B \cap \incl(X)$.
  In the latter case, since $g \in \incl\inv(B)$, we have $\pi(g,0) = \incl(g) \in B \cap \incl(G)$, so $g \in \incl\inv(B \cap \incl(G))$. Thus $\tilde{x} = \pi(g,s) \in \pi(\incl\inv(B \cap \incl(G)) \times (0,1])$.

  ($\supseteq$) If $\tilde{x} \in B \cap \incl(X)$, then $\tilde{x} = \incl(x)$ for some $x \in X$ with $\incl(x) \in B$, so $x \in \incl\inv(B)$ and $\tilde{x} = \pi(x,0) \in \susp(\incl\inv(B))$. If $\tilde{x} \in \pi(\incl\inv(B \cap \incl(G)) \times (0,1])$, then $\tilde{x} = \pi(g,s)$ for some $g \in \incl\inv(B \cap \incl(G))$ and $s \in (0,1]$. Since $g \in \incl\inv(B \cap \incl(G))$, we have $\incl(g) \in B \cap \incl(G)$, which means $g \in \incl\inv(B) \cap G$, so $(g,s) \in \pi_0\inv(\incl\inv(B))$ and $\tilde{x} = \pi(g,s) \in \susp(\incl\inv(B))$.
\end{proof}

\subsection{Trajectories and Invariant Sets}

We establish the natural correspondence between hybrid trajectories in $X$, hybrid trajectories in $X'$, and continuous trajectories in $\susp X$.
\begin{lem}
  \label{lem:InclusionOrbit}
  For any $x \in X$ and $(t,n) \in \dom(\psi_x)$,
  \(
    \incl(\psi_x(t,n)) = \Phi_\cH(t+n,\incl(x)).
  \)
\end{lem}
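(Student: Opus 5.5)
We argue by induction on the jump index $n$, using the relaxed system $\cH'$ as an intermediary and the semigroup property of $\Phi_\cH$, which holds since $\Phi_\cH$ is a semiflow by Theorem~\ref{thm:hybridsemiflow}. Write $\psi_x(t,n)=\gamma_n(t)$ with jump times $t_0=0\le t_1\le\cdots$ as in Definition~\ref{defn:HybridTrajectory}. The inductive claim is that for every $n$ with $I_n\times\{n\}\subseteq\dom(\psi_x)$ and every $t\in I_n$,
\[
  \incl(\gamma_n(t))=\Phi_\cH(t+n,\incl(x)).
\]
To set this up, we first record the key local identity. For $y\in X$ and $0\le s\le \stoptime(y)$ with $s<\infty$, we have $\varphi'(s,\iota(y))=\iota(\varphi(s,y))$. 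Indeed, for $s<\stoptime(y)$ this is the first case of the definition of $\varphi'$. For $s=\stoptime(y)$ the second case gives $(\varphi(\stoptime(y),y),0)$, which equals $\iota(\varphi(\stoptime(y),y))$ under the identification $x\sim(x,0)$. If $y\in G$, then $\stoptime(y)=0$ and the statement is trivial. Applying $\pi$ yields $\Phi_\cH(s,\incl(y))=\incl(\varphi(s,y))$.

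\emph{Base case $n=0$.} For $t\in I_0$ we have $\gamma_0(t)=\varphi(t,x)$ with $t\le \stoptime(x)$. This holds for $t<t_1$ by item (iv), and at $t=t_1$ by continuity of $\gamma_0$ together with continuity of $\varphi$. The local identity with $y=x$ then gives the claim.

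\emph{Jump step.} This is the main point. Suppose the claim holds at $(t_{n+1},n)$, and set $g=\gamma_n(t_{n+1})\in G$. Then $\incl(g)=\Phi_\cH(t_{n+1}+n,\incl(x))$. In $X'$, the third case of $\varphi'$ gives $\varphi'(1,(g,0))=(g,1)\in G'$. In $\susp X$ we have $\pi(g,1)=\pi(r'(g,1))=\incl(r(g))=\incl(\gamma_{n+1}(t_{n+1}))$ by item (iii). Hence, by the semigroup property,
\[
  \Phi_\cH(t_{n+1}+n+1,\incl(x))=\Phi_\cH(1,\incl(g))=\incl(\gamma_{n+1}(t_{n+1})),
\]
which is the claim at $(t_{n+1},n+1)$.

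\emph{Flow step.} For $t\in I_{n+1}$, write $y=\gamma_{n+1}(t_{n+1})$ and $s=t-t_{n+1}$. Then $0\le s\le\stoptime(y)$ and $\gamma_{n+1}(t)=\varphi(s,y)$, again using continuity at the right endpoint. The local identity and the semigroup property give
\[
  \incl(\gamma_{n+1}(t))=\Phi_\cH(s,\incl(y))=\Phi_\cH(s+t_{n+1}+n+1,\incl(x)),
\]
completing the induction.

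The main obstacle is bookkeeping rather than ideas. One must check that the quotient identifications are consistent at the endpoints: $(x,0)\sim x$ in $X'$, and $(g,1)\sim r(g)$ in $\susp X$. One must also check that the endpoint value $\gamma_n(t_{n+1})$ is correctly identified even though item (iv) is stated only on the half-open interval; continuity of $\gamma_n$ handles this. Degenerate cases such as $t_{n+1}=t_n$, meaning an immediate re-jump from a point of $G$, are covered automatically because $\stoptime=0$ on $G$. The final interval $I_J=[t_J,t_{J+1})$ with possibly $t_{J+1}=\infty$ is handled by the same flow step, since only finite $s<\stoptime(y)$ occur.
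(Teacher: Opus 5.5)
Your proof is correct and follows the same route as the paper: split the time $t+n$ into the continuous flow segments and the unit-time passages through the cylinders, then chain these together with the semigroup property of $\Phi_\cH$. Your version only makes explicit what the paper leaves implicit, namely separating the flow step from the jump step and checking the identifications $x\sim(x,0)$ and $(g,1)\sim r(g)$ at the interval endpoints.
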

\begin{proof}
  By definition of $\psi_x$, let $0 = t_0 < t_1 < \cdots < t_n \le t$ be the sequence of impact times, satisfying $t_{i+1} - t_i = \stoptime(\psi_x(t_i,i))$ for $i = 0,\ldots,n-1$.
  Recall that the suspension flow $\Phi_\cH$ takes $\tau=1$ to go through the cylinder, so for each interval
  \[
    \Phi_\cH((t_{i+1} - t_i) + 1, \incl(\psi_x(t_i, i))) = \incl(\psi_x(t_{i+1}, i+1)).
  \]
  Iterating $n$ times yields $\Phi_\cH(t_n + n, \incl(x)) = \incl(\psi_x(t_n, n))$. For the remaining time,
  \[
    \Phi_\cH(t - t_n, \incl(\psi_x(t_n, n))) = \incl(\varphi(t-t_n, \psi_x(t_n,n))) = \incl(\psi_x(t,n)).
  \]
  Therefore, $\Phi_\cH(t+n, \incl(x)) = \incl(\psi_x(t,n))$.
\end{proof}
\begin{lem}
  \label{lem:RelaxedOrbit}
  For any $x' \in X'$ and $(t,n) \in \dom(\psi'_{x'})$,
  \[
    \pi(\psi'_{x'}(t,n)) = \Phi_\cH(t, \pi(x')).
  \]
\end{lem}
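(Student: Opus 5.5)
The argument will parallel the proof of Lemma~\ref{lem:InclusionOrbit}, now working inside the relaxed system $\cH'$. In $\cH'$ the cylinder time is part of the flow time, so no extra ``$+n$'' shift should appear.

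Write the hybrid trajectory of $x'$ under $\cH'$ with jump times $0 = t_0 \le t_1 \le \cdots \le t_n \le t$. By Definition~\ref{defn:HybridTrajectory}, $t_{i+1} - t_i = \stoptime'(\psi'_{x'}(t_i,i))$. On each interval $[t_i,t_{i+1}]$ we have $\psi'_{x'}(s,i) = \varphi'(s-t_i,\psi'_{x'}(t_i,i))$. This holds at the endpoint $s = t_{i+1}$ as well, since $\dom(\varphi')$ includes $t = \stoptime'(z)$ and $\gamma_i$ is continuous. Since $\Phi_\cH$ is induced by $\varphi'$ (Definition~\ref{defn:SuspensionSemiflow}), applying $\pi$ gives
\[
\pi(\psi'_{x'}(s,i)) = \Phi_\cH(s-t_i,\pi(\psi'_{x'}(t_i,i))), \qquad s\in[t_i,t_{i+1}].
\]

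The key step is the jump. At $t_{i+1}$ the point $\psi'_{x'}(t_{i+1},i)$ lies in $G'$, and by condition (iii) of Definition~\ref{defn:HybridTrajectory} we have $\psi'_{x'}(t_{i+1},i+1) = r'(\psi'_{x'}(t_{i+1},i))$. The quotient identifies $z'\sim r'(z')$ on $G'$, so
\[
\pi(\psi'_{x'}(t_{i+1},i+1)) = \pi(\psi'_{x'}(t_{i+1},i)) = \Phi_\cH(t_{i+1}-t_i,\pi(\psi'_{x'}(t_i,i))).
\]
I then induct on $n$, using the semigroup property of $\Phi_\cH$ (valid by Theorem~\ref{thm:hybridsemiflow}, since $\cH$ satisfies the TGC), to get $\pi(\psi'_{x'}(t_n,n)) = \Phi_\cH(t_n,\pi(x'))$. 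Applying the displayed flow relation on the last interval then finishes the proof.

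The step I expect to need care is verifying that the local semiflow $\varphi'$ and $\Phi_\cH$ agree even where $\varphi'$ is not defined beyond $\stoptime'$. The issue is that the identity $\Phi_\cH(s,\pi(z')) = \pi(\varphi'(s,z'))$ holds only for $s \le \stoptime'(z')$, and it is the concatenation through the jumps that extends it. To handle this, I would read Definition~\ref{defn:SuspensionSemiflow} precisely this way: as the statement on $\dom(\varphi')$, extended by concatenation.

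A second point is degenerate intervals with $t_{i+1} = t_i$, which occur when $\psi'_{x'}(t_i,i)\in G'$. These are covered by the same identification, so they add no flow time.
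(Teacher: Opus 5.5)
Your proposal is correct and follows essentially the same route as the paper. Both arguments combine $\Phi_\cH(\tau,\pi(z'))=\pi(\varphi'(\tau,z'))$ on each flow segment with the identification $\pi(g,1)=\pi(\iota(r(g)))$ at each jump; you additionally make explicit the induction over jumps, the use of the semigroup property, and the reading of the definition of $\Phi_\cH$ beyond $\stoptime'$ by concatenation, all of which the paper leaves implicit.
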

\begin{proof}
  By Definition~\ref{defn:SuspensionSemiflow}, $\Phi_\cH(t, \pi(x')) = \pi(\varphi'(t,x'))$. Since $\pi(g,1)=\pi(\iota(r(g)))$ for $g \in G$, if $\psi'_{x'}(t,n) = (g,1) \in G'$, then $\pi(\psi'_{x'}(t,n)) = \pi(\iota(r(g))) = \pi(\psi'_{x'}(t,n+1))$. Thus, $\pi(\psi'_{x'}(t,n)) = \Phi_\cH(t, \pi(x'))$.
\end{proof}
\begin{lem}
  \label{lem:SuspensionOrbit}
  For any $A \subset X$, the suspension of the image of the hybrid trajectories starting in $A$ corresponds exactly to the trajectories of the suspension semiflow starting in $\susp A$, that is,
  \[
    \susp (\psi([0,\infty),A)) = \Phi_\cH([0,\infty),\susp A).
  \]
\end{lem}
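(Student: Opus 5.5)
We prove the equality by double inclusion, working pointwise. By definition, $\susp(\psi([0,\infty),A))$ is the union, over $x\in A$, of the sets $\susp(\psi([0,\infty),x))$. Likewise, $\Phi_\cH([0,\infty),\susp A)$ is the union over $x\in A$ of the forward orbits of the points of $\susp(x)$. Each such $\susp(x)$ is either the single point $\incl(x)$ when $x\notin G$, or the segment $\pi(\{x\}\times[0,1])$ when $x\in G$. It therefore suffices to prove, for each fixed $x\in X$,
\[
\susp\bigl(\psi([0,\infty),x)\bigr)=\Phi_\cH\bigl([0,\infty),\susp(x)\bigr).
\]
Since $\incl(x)\in\susp(x)$ and the segment $\pi(\{x\}\times[0,1])$ is itself part of the forward orbit of $\incl(x)$ when $x\in G$, the right-hand side equals $\Phi_\cH([0,\infty),\incl(x))$.

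\textbf{($\subseteq$).} Take a point $\psi_x(t,n)$ on the hybrid trajectory. If $\psi_x(t,n)\notin G$, its suspension is $\incl(\psi_x(t,n))$, and Lemma~\ref{lem:InclusionOrbit} gives $\incl(\psi_x(t,n))=\Phi_\cH(t+n,\incl(x))$. If $\psi_x(t,n)=g\in G$, its suspension also contains the cylinder points $\pi(g,s)$ for $s\in(0,1]$. Since $g$ is an impact point, the semiflow $\varphi'$ moves $(g,0)$ up the cylinder, so $\pi(g,s)=\Phi_\cH(s,\incl(g))=\Phi_\cH(t+n+s,\incl(x))$. The top point $\pi(g,1)=\incl(r(g))$ is the next point $\psi_x(t,n+1)$ of the trajectory.

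\textbf{($\supseteq$).} Fix $\tau\ge 0$ and find $(t,n)\in\dom(\psi_x)$ with $\Phi_\cH(\tau,\incl(x))\in\susp(\psi_x(t,n))$. The suspended time axis is partitioned into consecutive blocks: a flow interval of length $t_{i+1}-t_i$ followed by a cylinder interval of length $1$. The endpoints are $T_i=t_i+i$ and $T_{i+1}=t_{i+1}+i+1$, with the flow part ending at $t_{i+1}+i$.
\begin{itemize}
\item If $\tau\in[T_i,\,t_{i+1}+i]$, take $(t,n)=(\tau-i,\,i)$ and apply Lemma~\ref{lem:InclusionOrbit}.
\item If $\tau\in[t_{i+1}+i,\,T_{i+1}]$, the point is $\pi(g,s)$ with $g=\psi_x(t_{i+1},i)\in G$, which lies in $\susp(g)$.
\end{itemize}

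\textbf{Main obstacle.} The hard step is showing that every $\tau\ge 0$ falls into one of these blocks. This needs two things.

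First, we need the non-blocking assumption of Remark~\ref{rem:NonBlocking}. Zeno behaviour is harmless in suspended time: if $J=\infty$, then $T_n\ge n\to\infty$, so the blocks exhaust $[0,\infty)$ even when $t_n$ converges. If instead $J<\infty$, the last interval has infinite length, again because of Remark~\ref{rem:NonBlocking}.

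Second, we need to handle zero-length flow intervals, where $\stoptime=0$ because the reset lands back in $G$. There the block is purely a cylinder interval, and the same bookkeeping applies as long as we track the index $n$ correctly.

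Finally, the cylinder case relies on the fact that $\Phi_\cH$ is a genuine semiflow, which we take from Theorem~\ref{thm:hybridsemiflow} under the TGC. Its semigroup property lets us evaluate $\Phi_\cH(\tau,\incl(x))$ block by block from the block start points $\Phi_\cH(T_i,\incl(x))=\incl(\psi_x(t_i,i))$.
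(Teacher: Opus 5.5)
Your proposal is correct and follows essentially the same route as the paper: a double inclusion driven by Lemma~\ref{lem:InclusionOrbit}, with cylinder points handled through $\pi(g,s)=\Phi_\cH(s,\incl(g))$, and suspended time split into flow blocks and unit cylinder blocks, as in the paper's $\tau=t+n+\sigma$. The differences are minor. You absorb $\susp(x)$ into the forward orbit of $\incl(x)$, where the paper instead shifts forward to $\incl(r(g))$. You also justify explicitly, via Remark~\ref{rem:NonBlocking} and $T_n\ge n$, that the blocks exhaust $[0,\infty)$, a step the paper leaves implicit.
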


\begin{proof}
  If $\tilde{z} \in \susp(\psi([0,\infty),A))$, then $\tilde{z} \in \susp(x)$ for some $x = \psi_{x_0}(t,n)$ with $x_0 \in A$ and $(t,n) \in \dom(\psi_{x_0})$.
  If $\tilde{z} \in \incl(X)$, then $\tilde{z} = \incl(x)$. By Lemma~\ref{lem:InclusionOrbit}, $\tilde{z} = \Phi_\cH(t+n, \incl(x_0))$, which implies $\tilde{z} \in \Phi_\cH([0,\infty), \susp A)$.
  If $\tilde{z} \notin \incl(X)$, then $x \in G$ and $\tilde{z} = \pi(x,s)$ for some $s \in (0,1)$, therefore Lemma~\ref{lem:InclusionOrbit} yields
  \[
    \tilde{z} = \Phi_\cH(s, \incl(x)) = \Phi_\cH(s, \Phi_\cH(t+n, \incl(x_0))) = \Phi_\cH(t+n+s, \incl(x_0)).
  \]
  Thus $\tilde{z} \in \Phi_\cH([0,\infty), \susp A)$ and $\susp (\psi([0,\infty),A)) \subseteq \Phi_\cH([0,\infty),\susp A)$.

  Conversely, if $\tilde{z} \in \Phi_\cH([0,\infty),\susp A)$, then $\tilde{z} = \Phi_\cH(\tau, \tilde{y}_0)$ for some $\tilde{y}_0 \in \susp A$ and $\tau \geq 0$.
  We may reduce to the case where the trajectory starts in $\incl(X)$ since $\tilde{y}_0 \in \pi(G \times (0,1))$ implies that $\tilde{y}_0 = \pi(g,s)$ for $g \in A \cap G$, that is, shifting by $s$ yields $\Phi_\cH(1-s, \tilde{y}_0) = \incl(r(g))$ and $r(g) \in \psi([0,\infty), A)$. Thus, assume $\tilde{y}_0 = \incl(x_0)$ for some $x_0 \in \psi([0,\infty), A)$.
  Writing $\tau = t + n + \sigma$ where $n \in \N$ is the number of jumps, $t$ the continuous time $X$, and $\sigma \in [0,1)$, we have
  \[
    \Phi_\cH(t+n, \incl(x_0)) = \incl(\psi_{x_0}(t,n)) = \incl(x_1)
  \]
  by Lemma~\ref{lem:InclusionOrbit}. If $\sigma = 0$, then $\tilde{z} = \incl(x_1) \in \susp(\psi([0,\infty), A))$.
  If $\sigma > 0$, then $x_1 \in G$ and $\tilde{z} = \Phi_\cH(\sigma, \incl(x_1)) = \pi(x_1, \sigma) \in \susp(x_1) \subseteq \susp(\psi([0,\infty), A))$.
\end{proof}


\begin{lem}
  \label{lem:GuardInvariant}
  Let $(y,s) \in G\times [0,1)$. If $\pi(y,s) \in \tilde{S}$ for some invariant set $\tilde{S}$ under $\Phi_\cH$, then $\pi(\{y\} \times [0,1]) \subset \tilde{S}$.
\end{lem}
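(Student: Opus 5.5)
Let $\tilde S$ be invariant under $\Phi_\cH$ and $\tilde z=\pi(y,s)\in\tilde S$ with $y\in G$, $s\in[0,1)$. The segment $\pi(\{y\}\times[0,1])$ splits into a forward part $\pi(\{y\}\times[s,1])$ and a backward part $\pi(\{y\}\times[0,s])$. The plan is to obtain the forward part from forward invariance and the backward part from backward invariance. The second step uses the fact that, inside the open cylinder, backward trajectories are unique.

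\emph{Forward part.} By the definition of $\varphi'$, the point $(y,s)$ moves as $\varphi'(t,(y,s))=(y,s+t)$ for $0\le t\le 1-s$. Hence $\Phi_\cH(t,\tilde z)=\pi(y,s+t)$ for these $t$. Forward invariance gives $\Phi_\cH([0,\infty),\tilde z)\subseteq\tilde S$, so $\pi(\{y\}\times[s,1])\subseteq\tilde S$. This includes the endpoint $\pi(y,1)=\incl(r(y))$.

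\emph{Backward part.} If $s=0$ this part is just the point $\tilde z$ itself, so assume $s\in(0,1)$. By backward invariance there is a backward trajectory $\gamma:(-\infty,0]\to\susp X$ with $\gamma(0)=\tilde z$ and $\gamma((-\infty,0])\subseteq\tilde S$. I will show that $\gamma(-\tau)=\pi(y,s-\tau)$ for all $\tau\in[0,s]$. This yields $\pi(\{y\}\times[0,s])\subseteq\tilde S$, and combined with the forward part it proves the lemma.

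\emph{Main obstacle: uniqueness of preimages in the cylinder.} Fix $\tau\in(0,s]$, write $w=\gamma(-\tau)$, and choose $w'\in X'$ with $\pi(w')=w$. Then $\pi(\varphi'(\tau,w'))=\tilde z$. We need to identify $w$.

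\begin{itemize}
\item The map $\pi$ is injective on $G\times(0,1)$, and $\pi(G\times(0,1))$ is disjoint from $\incl(X)$. Both facts were already used in Lemma~\ref{lem:InclInvSusp}. Therefore $\varphi'(\tau,w')=(y,s)$ exactly.
\item Inspect the cases of $\varphi'$:
\begin{itemize}
\item If $w'=\iota(x)$, the image has the form $(\varphi(\stoptime(x),x),\tau-\stoptime(x))$. This forces $\varphi(\stoptime(x),x)=y$ and $\stoptime(x)=\tau-s\le 0$. Hence $\stoptime(x)=0$ and $\tau=s$, so $x\in G$ and $x=y$.
\item If $w'=(g,u)$, then $g=y$ and $u=s-\tau$.
\end{itemize}
\item In either case $w=\pi(y,s-\tau)$.
\item The case $w'=(g,1)$ identified with $\iota(r(g))$ cannot occur. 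Flowing for a positive time from such a point lands in the $\iota$-branch, and by the first subcase its image could only be $(y,s)$ if $\tau=s$ and the point equals $\iota(y)$, which means $u=0$. This is consistent.
\end{itemize}

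The remaining care is bookkeeping: confirm that the semigroup relation $\Phi_\cH(\tau,\gamma(-\tau))=\gamma(0)$ holds by the definition of backward trajectory, and that each $\gamma(-\tau)$ lies in $\tilde S$. Then $\pi(\{y\}\times[0,1])\subseteq\tilde S$.
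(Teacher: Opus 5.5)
Your argument is the paper's: forward invariance gives $\pi(\{y\}\times[s,1])\subset\tilde S$. Backward invariance, together with the fact that a backward trajectory through $\pi(y,s)$ inside the cylinder must be $t\mapsto\pi(y,s-t)$, gives $\pi(\{y\}\times[0,s])\subset\tilde S$. You also justify that uniqueness claim, which the paper only asserts.

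The one thing to tighten is that $\Phi_\cH(\tau,\pi(w'))=\pi(\varphi'(\tau,w'))$ holds only up to the next reset. You should therefore also rule out trajectories that reset at a positive time before $\tau$, for example $w'=(g,u)$ with $u+\tau>1$. This is immediate: after such a reset the point lies in $\incl(X)$ and needs at least an additional time $s$ to reach $\pi(y,s)$, which forces $\tau>s$.
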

\begin{proof}
  Note that forward invariance yields $\pi(\{y\} \times [s,1]) \subset \tilde{S}$, while backward trajectories have the form $t \mapsto (y, s-t)$, so $\pi(\{y\} \times [0,s]) \subset \tilde{S}$ by backward invariance.
\end{proof}
We now establish the bijection between the invariant structures of $\cH$ and $\Phi_\cH$.
\begin{prop}
  \label{prop:HybridInvariantSet}
  If $S \subset X$ is an invariant set under $\cH$, then $\susp S $ is an invariant set under $\Phi_\cH$ and
  \(
    \incl\inv(\susp S) = S.
  \)
\end{prop}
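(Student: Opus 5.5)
The proof has two parts: the identity $\incl\inv(\susp S)=S$, and the invariance of $\susp S$ under $\Phi_\cH$, split into forward and backward invariance.

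\textbf{The identity.} By Lemma~\ref{lem:InclInvSusp}, $\incl\inv(\susp S) = S \cup r(S\cap G)$, so it suffices to show $r(S\cap G)\subseteq S$.
\begin{itemize}
\item Take $g\in S\cap G$. Then $\stoptime(g)=0$, so the hybrid trajectory of $g$ jumps immediately.
\item Hence $t_1=t_0=0$ and $\psi_g(0,1)=r(g)$.
\item Forward invariance of $S$ under $\cH$ gives $\psi([0,\infty),g)\subseteq S$, so $r(g)\in S$.
\end{itemize}

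\textbf{Forward invariance of $\susp S$.} Apply Lemma~\ref{lem:SuspensionOrbit} with $A=S$:
\[
\Phi_\cH([0,\infty),\susp S)=\susp(\psi([0,\infty),S)).
\]
Forward invariance of $S$ gives $\psi([0,\infty),S)\subseteq S$. The reverse inclusion $S\subseteq\psi([0,\infty),S)$ holds via $(t,n)=(0,0)$. So the right-hand side equals $\susp S$, and $\Phi_\cH(t,\susp S)\subseteq \susp S$ for all $t\ge 0$.

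\textbf{Backward invariance: setup.} I must produce, for every $\tilde z\in\susp S$, a backward trajectory of $\Phi_\cH$ through $\tilde z$ that stays in $\susp S$. I first reduce to points of $\incl(S)$.
\begin{itemize}
\item If $\tilde z=\pi(g,s)$ with $g\in S\cap G$ and $s\in(0,1)$, then $\tilde z=\Phi_\cH(s,\incl(g))$.
\item The segment $\pi(\{g\}\times[0,s])$ lies in $\susp S$.
\item So it suffices to build a backward trajectory through $\incl(g)$ and prepend this segment.
\end{itemize}

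\textbf{Backward invariance: construction.} For $\tilde z=\incl(x)$ with $x\in S$, backward invariance of $S$ gives a backward hybrid trajectory $\beta_x=(\gamma_n)$ with image in $S$. I lift it by inserting one unit of time through the cylinder at each jump.
\begin{itemize}
\item On the flow pieces, define $\tilde\gamma(\tau)=\incl(\gamma_n(t))$, where $\tau=t-|n|$ for $n\le 0$.
\item On each inserted unit interval before a jump at $g_n=\gamma_n(t_n)\in S\cap G$, travel along $\pi(\{g_n\}\times[0,1])\subseteq\susp S$.
\item Continuity at jumps follows from the identification $\pi(g_n,1)=\incl(r(g_n))=\incl(\gamma_{n+1}(t_n))$.
\item The relation $\Phi_\cH(s,\tilde\gamma(\tau))=\tilde\gamma(\tau+s)$ follows piecewise from the definition of $\varphi'$, exactly as in the proof of Lemma~\ref{lem:InclusionOrbit}, and is then concatenated using the semigroup property.
\end{itemize}

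\textbf{Main obstacle.} The delicate step is checking that $\tilde\gamma$ is defined on all of $(-\infty,0]$.
\begin{itemize}
\item With infinitely many backward jumps, the suspension time gains one unit per jump and so diverges to $-\infty$. This holds even if the hybrid times $t_n$ accumulate, i.e.\ in backward Zeno behavior, so that case is harmless.
\item With finitely many jumps $J$, I need the last interval $I_{-J}$ to be unbounded below. I would read Definition~\ref{defn:BackwardHybridTrajectory} as requiring $t_{-J-1}=-\infty$ in that case, since otherwise no backward trajectory exists.
\item Under that reading, the lifted trajectory is defined on all of $(-\infty,0]$, and backward invariance of $\susp S$ follows.
\end{itemize}
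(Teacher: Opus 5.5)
Your proposal is correct and follows the same route as the paper. The identity comes from Lemma~\ref{lem:InclInvSusp} together with $r(S\cap G)\subseteq S$. Forward invariance comes from Lemma~\ref{lem:SuspensionOrbit} with $\psi([0,\infty),S)=S$. Backward invariance comes from lifting backward hybrid trajectories, spending one unit of cylinder time per jump; the paper only asserts this case is ``analogous,'' and your explicit lift faithfully expands that remark.

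The one step your lift glosses over is the left endpoint of each flow piece. There $\Phi_\cH$ must follow $\incl\circ\gamma_n$ rather than send $\incl(r(g_{n-1}))$ into a new cylinder. This holds because the relation $\Phi_\cH(s,\tilde\gamma(\tau))=\tilde\gamma(\tau+s)$ extends from interior times by continuity of $\Phi_\cH$ (Theorem~\ref{thm:hybridsemiflow}, i.e.\ the TGC).
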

\begin{proof}
  Since $S$ is invariant under $\cH$, $r(S \cap G) \subseteq S$. By Lemma~\ref{lem:InclInvSusp}, we obtain
  \[
    \incl\inv(\susp S) = S \cup r(S \cap G) = S.
  \]
  Now we verify invariance.
  Since $S$ is forward invariant, $\psi([0,\infty),S) = S$. Therefore, Lemma~\ref{lem:SuspensionOrbit} implies that $\Phi_\cH([0,\infty), \susp S) = \susp(\psi([0,\infty),S)) = \susp S$, hence $\susp S$ is forward invariant. Backward invariance is analogous when considering the analogous suspension of backward hybrid trajectories.
\end{proof}

\begin{prop}
  \label{prop:SemiflowInvariantSet}
  If $\tilde{S} \subset \susp X$ is an invariant set under $\Phi_\cH$, then $\incl\inv(\tilde{S})$ is an invariant set under $\cH$ and
  \(
    \susp(\incl\inv(\tilde{S})) = \tilde{S}.
  \)
\end{prop}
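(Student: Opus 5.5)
Set $S \coloneqq \incl\inv(\tilde{S})$. We first prove the set identity $\susp S = \tilde{S}$ and then transfer invariance through Lemma~\ref{lem:SuspensionOrbit}, mirroring the proof of Proposition~\ref{prop:HybridInvariantSet}.

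For the identity we apply Lemma~\ref{lem:SuspInclInv} with $B=\tilde{S}$:
\[
  \susp S = (\tilde{S}\cap \incl(X)) \cup \pi\bigl(\incl\inv(\tilde{S}\cap\incl(G))\times(0,1]\bigr).
\]
For the inclusion $\subseteq$, points of the first piece lie in $\tilde{S}$ trivially. For a point $\pi(g,s)$ with $\incl(g)=\pi(g,0)\in\tilde{S}$, Lemma~\ref{lem:GuardInvariant} applied at $(g,0)$ gives $\pi(\{g\}\times[0,1])\subset\tilde{S}$. For $\supseteq$, let $\tilde z\in\tilde{S}$. If $\tilde z\in\incl(X)$ it lies in the first piece. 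Otherwise $\tilde z=\pi(g,s)$ with $g\in G$ and $s\in(0,1)$, because $\pi(G\times\{1\})\subset\incl(X)$. Lemma~\ref{lem:GuardInvariant} again gives $\pi(g,0)=\incl(g)\in\tilde{S}$, so $g\in \incl\inv(\tilde S\cap\incl(G))$ and $\tilde z$ lies in the second piece. This settles $\susp(\incl\inv\tilde S)=\tilde S$.

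Forward invariance of $S$ follows next. Let $x\in S$. By Lemma~\ref{lem:SuspensionOrbit} and the forward invariance of $\tilde S$,
\[
  \susp(\psi([0,\infty),x))=\Phi_\cH([0,\infty),\susp x)\subseteq \Phi_\cH([0,\infty),\susp S)\subseteq\tilde S .
\]
Here we need $\susp x\subseteq \susp S=\tilde S$, which follows from the identity above. Since $y\in\incl\inv(\susp y)$ for every $y$, each point $y$ of the hybrid trajectory satisfies $\incl(y)\in\tilde S$, i.e.\ $y\in S$. Conversely, $\Inv^+(S,\cH)\subseteq S$ holds trivially because $\psi_x(0,0)=x$.

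The main obstacle is backward invariance, since it requires building a backward hybrid trajectory from a backward $\Phi_\cH$-trajectory $\gamma:(-\infty,0]\to\tilde S$ through $\incl(x)$. We would let $\tau_{-1}>\tau_{-2}>\cdots$ be the times $\tau\le 0$ at which $\gamma$ re-enters $\incl(X)$ from the cylinder interior, i.e.\ $\gamma(\tau)=\pi(g,1)$. On each maximal interval where $\gamma$ stays in $\incl(X)$, it lifts via $\incl$ to a $\varphi$-orbit segment in $X$; we must check that $\incl$ is injective off $r(G)$, or otherwise handle the ambiguity by choosing lifts consistently using the cylinder segments. On each cylinder interval of length exactly $1$, $\gamma$ has the form $\pi(g,\cdot)$ with $g\in G$, and the next lifted segment starts at $r(g)$. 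Concatenating the lifted segments gives the pieces $\gamma_n$ of Definition~\ref{defn:BackwardHybridTrajectory}, with jump times shifted by the accumulated cylinder time. The lifted pieces lie in $\incl\inv(\tilde S)=S$.

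Two technical points remain. The first is that the lifted flow segments avoid $G$ in their interiors. This holds because a flow segment in $\incl(X)$ entering $\incl(G)$ must continue into the cylinder, by the definition of $\varphi'$. The second is the degenerate case of finitely many jumps, or of $\gamma$ remaining in a single cylinder fiber, which is impossible for an infinite backward time since each fiber has length $1$. If the required injectivity fails, we would instead lift $\gamma$ directly to $X'$ and use the relaxed system $\cH'$, noting via Lemma~\ref{lem:RelaxedOrbit} that hybrid trajectories in $X'$ project to $\Phi_\cH$-orbits, and then project to $X$ with $\pi_0$.
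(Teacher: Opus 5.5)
Your proof is correct and follows essentially the paper's route: you get the identity $\susp(\incl\inv(\tilde S))=\tilde S$ from Lemma~\ref{lem:SuspInclInv} and Lemma~\ref{lem:GuardInvariant}, and invariance by transporting orbits through $\incl$. The paper differs only in bookkeeping: it gets forward invariance directly from Lemma~\ref{lem:InclusionOrbit}, so it does not need the identity first, and it dispatches backward invariance in one line where you spell out the lifting. Your injectivity hedge is unnecessary, since the only identifications glue $G\times\{1\}$ to $\iota(X)$, so $\incl$ is injective on all of $X$ (a closed embedding). The only real choice at points of $r(G)$ is which cylinder fiber to follow backward, and your construction already settles this by following $\gamma$.
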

\begin{proof}
  Let $S = \incl\inv(\tilde{S})$.
  First, we verify invariance. For any $x \in S$, Lemma~\ref{lem:InclusionOrbit} implies that $\incl(\psi([0,\infty), x))$ is contained in $\Phi_\cH([0,\infty), \tilde{S}) = \tilde{S}$. Thus, $\psi([0,\infty), S) \subseteq \incl\inv(\tilde{S}) = S$. Backward invariance follows similarly by projecting backward trajectories of $\Phi_\cH$. To show $\susp S = \tilde{S}$, recall that Lemma~\ref{lem:SuspInclInv} yields
  \[
    \susp S = (\tilde{S} \cap \incl(X)) \cup \pi\left( \incl\inv(\tilde{S} \cap \incl(G)) \times (0,1] \right).
  \]
  Note that for $g \in \incl\inv(\tilde{S} \cap \incl(G))$, $\pi(g,1) = \incl(r(g)) \in \tilde{S} \cap \incl(X)$ by invariance, so it is enough to analyze $z = \pi(g,s)$ for $s \in (0,1)$. By Lemma~\ref{lem:GuardInvariant}, invariance of $\tilde{S}$ implies that $z \in \tilde{S}$ if and only if $\incl(g) \in \tilde{S}$, which is equivalent to $g \in \incl\inv(\tilde{S} \cap \incl(G))$. Thus, $\pi( \incl\inv(\tilde{S} \cap \incl(G)) \times (0,1))=\tilde{S}\setminus \incl(X)$. Substituting it back, $\susp S = \tilde{S}$.
\end{proof}
\begin{thm}
  \label{thm:IsolatedSet}
  Let $S \subset X$ be an invariant set under $\cH$. Then $S$ is an isolated invariant set under $\cH$ if and only if $\susp S$ is an isolated invariant set under $\Phi_\cH$.
\end{thm}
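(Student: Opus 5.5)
The plan is to transport isolating neighborhoods in both directions using $\susp$ and $\incl\inv$, and to identify the maximal invariant sets through the correspondence of Propositions~\ref{prop:HybridInvariantSet} and~\ref{prop:SemiflowInvariantSet}. The key identity to establish is
\[
  \Inv(\susp K,\Phi_\cH)=\susp\Inv(K,\cH)
\]
for compact $K\subset X$, together with its counterpart
\[
  \Inv(\incl\inv(\tilde K),\cH)=\incl\inv\Inv(\tilde K,\Phi_\cH)
\]
for compact $\tilde K\subset\susp X$.

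For the first identity I argue in both directions. Let $\tilde S'=\Inv(\susp K,\Phi_\cH)$. By Proposition~\ref{prop:SemiflowInvariantSet}, $\incl\inv(\tilde S')$ is $\cH$-invariant. It is contained in $\incl\inv(\susp K)=K\cup r(K\cap G)$ by Lemma~\ref{lem:InclInvSusp}. To place it inside $K$ itself, I use invariance: a point $r(g)$ with $g\in K\cap G$ lies on a full trajectory in $\tilde S'$. Points of $\susp K$ that are not of the form $\incl(x)$ with $x\in K$ are only cylinder points over $K\cap G$ or images $r(g)$, so this forces membership in $K$ up to the overlap $r(K\cap G)\setminus K$. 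That overlap needs care, and I would handle it by shrinking or adjusting $K$.

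Conversely, $\susp\Inv(K,\cH)$ is $\Phi_\cH$-invariant by Proposition~\ref{prop:HybridInvariantSet} and lies in $\susp K$. This gives the identity, or at least the inclusion needed.

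For the direction ($\Rightarrow$), take $K$ isolating for $S$ and pass to the candidate neighborhood $\tilde K=\susp K$. This set is compact, being the image of the compact set $\pi_0\inv(K)$. The main issue is showing $\susp S\subset\Int(\susp K)$. Interior points $\incl(x)$ with $x\in S\setminus G$ are fine, since $\pi$ is a homeomorphism near them. Cylinder points $\pi(g,s)$ with $s\in(0,1)$ are fine as well, because $g\in S\subset\Int K$. The delicate points are those over $G$ and at the gluing $\pi(g,1)=\incl(r(g))$. There a neighborhood in $\susp X$ contains points $\pi(g',s)$ with $g'$ near $g$ and $s$ near $1$, and also $\incl(y)$ with $y$ near $r(g)$. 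Both kinds lie in $\susp K$ because $g\in\Int K$ and $r(g)\in S\subset\Int K$. A symmetric check near $s=0$ uses the TGC neighborhood $U$, though only openness of $K$ around $g$ is needed there. The identity above then gives $\Inv(\susp K)=\susp S\subset\Int\susp K$.

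For the direction ($\Leftarrow$), take $\tilde K$ isolating for $\susp S$ and set $K=\incl\inv(\tilde K)$. This set is closed in the compact space $X$, since $\incl$ is continuous, hence compact. Next, $\incl\inv(\Int\tilde K)$ is open and contains $\incl\inv(\susp S)=S$ by Proposition~\ref{prop:HybridInvariantSet}, so $S\subset\Int K$. Finally, any $\cH$-invariant set $S''\subset K$ has $\susp S''$ invariant, and I need it to lie in $\tilde K$. The cylinder part $\pi(\{g\}\times(0,1))$ need not lie in $\tilde K$, so in that case I would pass to $\tilde K\cup\susp K$, or argue via Lemma~\ref{lem:GuardInvariant} after shrinking $K$ to $\incl\inv(\tilde K')$ for a smaller isolating $\tilde K'$ whose cylinder fibers are full. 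I expect this step, together with the gluing-boundary interior check in ($\Rightarrow$), to be the main obstacle. Both rest on local compatibility of neighborhoods across the identification $(g,1)\sim r(g)$.
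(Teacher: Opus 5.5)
Your skeleton (transport neighborhoods with $\susp$ and $\incl\inv$, then invoke Propositions~\ref{prop:HybridInvariantSet} and~\ref{prop:SemiflowInvariantSet}) is the paper's. However, the steps you flag as obstacles are exactly where the proof lives, and as written none of them goes through.

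\textbf{($\Rightarrow$).} The identity $\Inv(\susp K,\Phi_\cH)=\susp\Inv(K,\cH)$ is false in general, and you then use $\susp K$ itself. For instance, suppose some $g\in K\cap G$ has $r(g)=y\notin K$, where $y\notin G$ is a rest point of $\varphi$. Then $\incl(y)=\pi(g,1)\in\susp K$ is a rest point of $\Phi_\cH$ lying outside $\susp S$.

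The missing step is the paper's shrinking argument. Since $r(S\cap G)\subset S\subset\Int K$, continuity of $r$ gives a compact neighborhood $K_1\subset\Int K$ of $S$ with $r(K_1\cap G)\subset K$. Then for any invariant $\tilde A\subset\susp K_1$, the set $\incl\inv(\tilde A)$ is $\cH$-invariant and lies in $K_1\cup r(K_1\cap G)\subset K$ by Lemma~\ref{lem:InclInvSusp}, hence in $S$. So you compare with $\Inv(K,\cH)$ for the larger set, instead of proving an identity for a single set.

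Your interior check has a second gap. At $\incl(x)$ with $x\in S\cap r(G)$, you only consider cylinder tips over a preimage lying in $S$. But $\pi\inv(\incl(x))=\setof{\iota(x)}\cup(r\inv(x)\times\setof{1})$, and $r\inv(x)$ may leave $K$, even when $r\inv(x)\cap S=\emptyset$. Take $X=[-1,3]$, $\dot x=x(x-1)$, $G=\setof{3}$, $r(3)=0$, $K=[-\tfrac12,\tfrac12]$. Then $S=\setof{0}$ is isolated, yet every neighborhood of $\incl(0)=\pi(3,1)$ contains points $\pi(3,s)$ with $s<1$. These lie outside $\susp K$, and outside $\susp K_1$ for every $K_1\subset K$. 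The paper's Case~2 has the same blind spot: it assumes $x=r(g)$ with $g\in S\cap G$ and that $U'$ is saturated.

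The repair is to add collar tips: set $\tilde N=\susp K_1\cup\pi\bigl((r\inv(K_1)\cap G)\times[\tfrac12,1]\bigr)$.
\begin{itemize}
\item Since $\incl\inv(\tilde N)\subset K_1\cup r(K_1\cap G)\subset K$, the previous argument still gives $\Inv(\tilde N,\Phi_\cH)=\susp S$.
\item With $V=\Int K_1$, the set $\iota(V)\cup\bigl((V\cap G)\times(0,1)\bigr)\cup\bigl((r\inv(V)\cap G)\times(\tfrac12,1]\bigr)$ is open and $\pi$-saturated, contains $\pi\inv(\susp S)$, and projects into $\tilde N$.
\end{itemize}

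\textbf{($\Leftarrow$).} The set $K=\incl\inv(\tilde K)$ can have a maximal invariant set larger than $S$. An $\cH$-invariant set may lie in $\incl\inv(\tilde K)$ while the fibers $\pi(\setof{g}\times(0,1))$ over its guard points leave $\tilde K$, so nothing forces its suspension into $\susp S$. Enlarging to $\tilde K\cup\susp K$ does not help, because you then lose control of the maximal invariant set on the suspension side.

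What is needed is your ``full fiber'' condition made concrete. Let $V=\setdef{g\in G}{\pi(\setof{g}\times[0,1])\subset\Int\tilde K}$. It is open in $G$ by compactness of $[0,1]$, and it contains $S\cap G$ by Lemma~\ref{lem:GuardInvariant}. Hence $G\setminus V$ is closed and disjoint from $S$. Since $S=\incl\inv(\susp S)$ is closed, take $K$ to be a compact neighborhood of $S$ inside the open set $\incl\inv(\Int\tilde K)\setminus(G\setminus V)$. Then every $\cH$-invariant $S''\subset K$ has $\susp S''\subset\tilde K$. Hence $\susp S''\subset\susp S$, and $S''\subset\incl\inv(\susp S)=S$.

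This is the role of the paper's set $V$; the constraint that actually matters is $K\cap G\subset V$.
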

\begin{proof}
  ($\Rightarrow$) Let $K$ be an isolating neighborhood for $S$ under $\cH$, so $S = \Inv(K,\cH)$ is contained in $\Int K$.
  We construct another isolating neighborhood $K_1$ with $S \subset K_1 \subset \Int K$ and $r(K_1 \cap G) \subset K$.
  Since $S$ is invariant, $r(S \cap G) \subset S \subset \Int K$.
  Since $r$ is continuous and $G$ is closed, there exists a neighborhood $U$ of $S \cap G$ in $G$ such that $r(U) \subset \Int K$.
  Together with the compactness of $S$, we can choose a compact neighborhood $K_1$ of $S$ such that $K_1 \subset \Int K$ and $K_1 \cap G \subset U$, which guarantees that $r(K_1 \cap G) \subset K$.

  We show $\susp K_1$ is an isolating neighborhood for $\susp S$ under $\Phi_\cH$.
  Let $\tilde{S} = \Inv(\susp K_1, \Phi_\cH)$.
  By Proposition~\ref{prop:SemiflowInvariantSet}, $\tilde{S} = \susp(S_1)$ where $S_1 = \incl\inv(\tilde{S})$ is an invariant set under $\cH$.
  Since $\tilde{S} \subset \susp K_1$, we have $S_1 \subset \incl\inv(\susp K_1)$.
  By Lemma~\ref{lem:InclInvSusp}, $\incl\inv(\susp K_1) = K_1 \cup r(K_1 \cap G)$.
  By construction of $K_1$, we have $r(K_1 \cap G) \subset K$, and since $K_1 \subset K$, it follows that $K_1 \cup r(K_1 \cap G) \subset K$.
  Therefore, $S_1$ is an invariant set under $\cH$ contained in $K$. Since $S = \Inv(K, \cH)$ is the maximal invariant set in $K$, we must have $S_1 \subseteq S$. Conversely, since $S \subset K_1$ is invariant, $\susp S \subset \susp K_1$ is invariant by Proposition~\ref{prop:HybridInvariantSet}, so $\susp S \subset \tilde{S}$. Thus, $\Inv(\susp K_1, \Phi_\cH) = \susp S$. It remains to show $\susp S \subset \Int(\susp K_1)$. Let $\tilde{x} \in \susp S$. We construct an open neighborhood $W_{\tilde{x}}$ contained in $\susp K_1$.

  \textit{Case 1: $\tilde{x} = \incl(x)$ for $x \in S \setminus r(G)$.}
  Since $x \in S \subset \Int K_1$, there exists $\varepsilon > 0$ such that $B(x,\varepsilon) \subset K_1$. Since $x \notin r(G)$ and $r(G)$ is compact, we can choose $\varepsilon$ sufficiently small such that that $B(x,\varepsilon) \cap r(G) = \emptyset$.
  Define $U' = \iota(B(x,\varepsilon)) \cup ((B(x,\varepsilon) \cap G) \times (0,1))$. This set is open in $X'$ and $\pi\inv(\pi(U'))=U'$.
  Then $W_{\tilde{x}} = \pi(U')$ is an open neighborhood of $\tilde{x}$ contained in $\susp K_1$.

  \textit{Case 2: $\tilde{x} = \incl(x)$ for $x \in S \cap r(G)$.}
  Let $x = r(g)$ for some $g \in S \cap G$. Since $g \in S \subset \Int K_1$ and $r(g) \in S \subset \Int K_1$, and $r$ is continuous, there exist open neighborhoods $U_g$ of $g$ and $V_x$ of $x$ such that $U_g \subset K_1$, $V_x \subset K_1$, $r(U_g \cap G) \subset V_x$, and $V_x \cap G \subset U_g$.
  Define $U' = \pi_0\inv(V_x) \cup ((U_g \cap G) \times (0, 1])$. Note that $U'$ is open and $\pi\inv(\pi(U'))=U'$.
  Then $W_{\tilde{x}} = \pi(U')$ is an open neighborhood of $\tilde{x}$ in $\susp X$. Since $U_g, V_x \subset K_1$, we have $W_{\tilde{x}} \subset \susp K_1$.

  \textit{Case 3: $\tilde{x} = \pi(g,s)$ for $g \in S \cap G$ and $s \in (0,1)$.}
  Since $g \in S \subset \Int K_1$, there exists $\varepsilon > 0$ such that $B(g,\varepsilon) \subset K_1$. The set $W_{\tilde{x}} = \pi((B(g,\varepsilon) \cap G) \times (0,1))$ is open in $\susp X$, contains $\tilde{x}$, and is contained in $\susp K_1$.

  In all cases, $\tilde{x} \in \Int(\susp K_1)$, so $\susp S \subset \Int(\susp K_1)$. Thus $\susp K_1$ isolates $\susp S$.

  ($\Leftarrow$) Assume $\susp S$ is an isolated invariant set under $\Phi_\cH$. Let $\tilde{N}$ be an isolating neighborhood such that $\Inv(\tilde{N}, \Phi_\cH) = \susp S \subseteq \Int(\tilde{N})$. We shall construct an isolating neighborhood $K$ of $S$. First, define $V \subset G$, as an open set in $G$, by
  \[
    V = \setdef{g \in G}{\pi(\setof{g} \times [0,1]) \subset \Int(\tilde{N})}.
  \]
  By Lemma~\ref{lem:GuardInvariant}, if $g \in S \cap G$, then $\pi(\{g\} \times [0,1]) \subseteq \susp S \subseteq \Int(\tilde{N})$. Thus, $S \cap G \subseteq V$ and $V$ is open in $G$. Now, define $U = \incl\inv(\Int(\tilde{N})) \cup V$ and note that
  $S \subseteq U$. Let $U_0\subset X$ be an open neighborhood of $S$ such that $S \subset U_0 \subset \cl(U_0) \subset U$. Let $K=\cl(U_0)$. We claim that $\Inv(K, \cH) = S$. By construction, $S \subseteq \Inv(K, \cH)$. Conversely, $\susp \Inv(K, \cH)$ is an invariant set contained in $\susp U \subseteq \tilde{N}$, so $\susp \Inv(K,\cH) \subset \susp S$. Therefore, Proposition~\ref{prop:HybridInvariantSet} yields equality.
\end{proof}
The following corollary is a combination of Proposition~\ref{prop:SemiflowInvariantSet} together with Theorem~\ref{thm:IsolatedSet}.
\begin{cor}
  Let $\tilde{S} \subset \susp X$ be an invariant set under $\Phi_\cH$. Then $\tilde{S}$ is an isolated invariant set if and only if $\incl\inv(\tilde{S})$ is an isolated invariant set under $\cH$.
\end{cor}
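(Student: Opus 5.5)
The plan is to reduce the corollary to Theorem~\ref{thm:IsolatedSet} by passing through the invariant set $S \coloneqq \incl\inv(\tilde{S})$ in $X$.

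First, I would invoke Proposition~\ref{prop:SemiflowInvariantSet} for the invariant set $\tilde{S}$ of $\Phi_\cH$. It gives two facts:
\begin{enumerate}[(i)]
  \item $S = \incl\inv(\tilde{S})$ is an invariant set under $\cH$;
  \item $\susp S = \susp(\incl\inv(\tilde{S})) = \tilde{S}$.
\end{enumerate}
Fact (ii) is the key identification. It says the set in the statement is literally the suspension of an $\cH$-invariant set. This places us in the setting of Theorem~\ref{thm:IsolatedSet}, whose hypothesis requires exactly an $\cH$-invariant set $S \subset X$.

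Next, I would apply Theorem~\ref{thm:IsolatedSet} to this $S$. It says $S$ is an isolated invariant set under $\cH$ if and only if $\susp S$ is an isolated invariant set under $\Phi_\cH$. Substituting $\susp S = \tilde{S}$ from (ii), and $S = \incl\inv(\tilde{S})$ by definition, this becomes: $\incl\inv(\tilde{S})$ is isolated under $\cH$ if and only if $\tilde{S}$ is isolated under $\Phi_\cH$. This is precisely the claimed equivalence, and both directions come at once from the biconditional.

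The argument is essentially bookkeeping, so I do not expect a genuine obstacle. The only point needing care is compactness. Isolated invariant sets are tacitly compact, as the proof of Theorem~\ref{thm:IsolatedSet} uses compactness of $S$. So I would check that if $\tilde{S}$ is compact then $\incl\inv(\tilde{S})$ is compact, and conversely:
\begin{itemize}
  \item Forward direction: $\incl$ is continuous, so $\incl\inv(\tilde{S})$ is closed in the compact space $X$, hence compact.
  \item Reverse direction: $\susp S = \pi(\pi_0\inv(S))$ is the continuous image of the closed set $\pi_0\inv(S)$ in the compact space $X'$, hence compact.
\end{itemize}
With this in place, the corollary follows directly from Proposition~\ref{prop:SemiflowInvariantSet} and Theorem~\ref{thm:IsolatedSet}.
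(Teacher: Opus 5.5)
Your proposal is correct and matches the paper's approach. The paper also obtains this corollary by applying Proposition~\ref{prop:SemiflowInvariantSet}, which shows that $S=\incl\inv(\tilde{S})$ is $\cH$-invariant with $\susp S=\tilde{S}$, and then Theorem~\ref{thm:IsolatedSet}; your compactness check is harmless but unnecessary, since Theorem~\ref{thm:IsolatedSet} is already stated as a biconditional for an arbitrary $\cH$-invariant set.
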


Having established the bijection between invariant sets and the preservation of isolation, we now define the Conley index for hybrid systems via the suspension construction.

%% file: sections/conleyindex.tex
\section{The Conley Index for Hybrid Systems}
\label{sec:conleyindex}

In this section, we define the Conley index for hybrid dynamical systems using the suspension semiflow constructed in Section~\ref{sec:suspension}. We also define hybrid index pairs, which allows for the computation of the index directly in the hybrid domain $X$ without explicit construction of the quotient space $\susp X$. For the remaining of this manuscript, let $\cH = (X,\varphi,G,r)$ be a hybrid dynamical system satisfying the Trapping Guard Condition (TGC).

\subsection{Suspension Conley Index}

The definition of the suspension Conley index relies on the correspondence between isolated invariant sets in $\cH$ and $\Phi_\cH$.
\begin{defn}
  \label{defn:SuspensionConleyIndex}
  Let $S \subset X$ be an isolated invariant set under $\cH$. By Theorem~\ref{thm:IsolatedSet}, $\susp S$ is an isolated invariant set for the suspension semiflow $\Phi_\cH$. The \emph{Suspension Conley Index} of $S$, denoted $\SCH_*(S)$, is defined as the homological Conley index of the corresponding isolated invariant set in the suspension semiflow
  \[
    \SCH_*(S) \coloneqq CH_*(\susp S).
  \]
\end{defn}

Computing $\SCH_*(S)$ via this definition requires identifying the suspension of the invariant set in the suspension space $\susp X$. Traditionally, the theory does not keep track of invariant sets, but  isolating neighborhoods instead. We now describe an approach via hybrid index pairs that allows us to compute the index directly.

\subsection{Hybrid Index Pairs}

Hybrid index pairs are the fundamental computational tool for the suspension Conley index. They generalize classical index pairs to the setting of hybrid dynamical systems, accounting for both continuous dynamics and discrete resets.

\begin{defn}
  \label{defn:HybridIndexPair}
  Let $S \subset X$ be an isolated invariant set. A pair of compact sets $(N,L)$ with $L \subset N \subset X$ is a \emph{hybrid index pair} for $S$ if:
  \begin{enumerate}[(i)]
    \item $S = \Inv(\cl(N\setminus L),\cH) \subset \Int(N\setminus L)$.
    \item $L$ is positively invariant in $N$: if $x \in L$, then for all $(t,n) \in \dom(\psi_x)$ such that $\psi_x(s,m) \in N$ for all $(s,m) \leq (t,n)$, we have $\psi_x(t,n) \in L$.
    \item $L$ is an exit set for $N$: if $x \in N$ and there exists $(t,n) \in \dom(\psi_x)$ with $\psi_x(t,n) \notin N$, then there exists $(T,k) \in \dom(\psi_x)$ such that $\psi_x(s,m) \in N$ for all $(s,m) \leq (T,k)$ and $\psi_x(T,k) \in L$.
  \end{enumerate}
\end{defn}
Although intuitive, verifying these conditions via hybrid trajectories may not be straight-forward. The following characterization uses  $\varphi$ and $r$ explicitly.
\begin{prop}
  \label{prop:AlternativeHybridIndexPair}
  A pair of compact sets $(N,L)$ with $L \subset N$ is a hybrid index pair for $S$ if and only if Condition (i) of Definition~\ref{defn:HybridIndexPair} holds and:
  \begin{enumerate}[(a)]
    \item If $x \in L \cap G$ and $r(x) \in N$, then $r(x) \in L$.
    \item If $x \in L\setminus G$ and $\varphi([0,T],x)\subseteq N$ for some $T < \stoptime(x)$, then $\varphi([0,T],x)\subset L$.
    \item If $x \in N \cap G$ and $r(x) \notin N$, then $x \in L$.
    \item If $x \in N \setminus G$ and $\varphi([0,\stoptime(x)],x) \not\subseteq N$, then there exists $T \leq \stoptime(x)$ such that $\varphi(T,x) \in L$.
  \end{enumerate}
\end{prop}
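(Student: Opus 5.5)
The plan is to show that the trajectory-level conditions (ii) and (iii) of Definition~\ref{defn:HybridIndexPair} are equivalent to the local conditions (a)--(d). The key tool is the decomposition of a hybrid trajectory $\psi_x$ into flow arcs $\gamma_n$ on $[t_n,t_{n+1}]$ joined by jumps $\gamma_{n+1}(t_{n+1}) = r(\gamma_n(t_{n+1}))$. The order on $\dom(\psi_x)$ is lexicographic in $(n,t)$. Each arc is a segment $\varphi([0,\stoptime(y)],y)$ with $y=\gamma_n(t_n)$. Since $\stoptime=0$ on $G$, a point of $G$ is immediately reset, so every step of a hybrid trajectory is either a flow segment starting off $G$ or a single reset $x\mapsto r(x)$ from $x\in G$. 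Condition (i) is common to both sides and plays no role.

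\emph{Necessity.} Assume (ii) and (iii). Each of (a)--(d) is a special case obtained by looking at the first step of $\psi_x$.
\begin{itemize}
\item[(a)] For $x\in L\cap G$, the trajectory is $\psi_x(0,0)=x$, $\psi_x(0,1)=r(x)$. Apply (ii) at $(0,1)$.
\item[(b)] For $x\in L\setminus G$ and $T<\stoptime(x)$, we have $\psi_x(s,0)=\varphi(s,x)$ for $s\le T$. Apply (ii) at each $(s,0)$.
\item[(c)] For $x\in N\cap G$ with $r(x)\notin N$, the trajectory leaves $N$ at $(0,1)$. Condition (iii) gives $(T,k)\le(0,1)$ with $\psi_x(T,k)\in L$. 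It cannot be $(0,1)$, since that point is outside $N$, so $x=\psi_x(0,0)\in L$.
\item[(d)] For $x\in N\setminus G$ whose arc up to impact leaves $N$, condition (iii) gives an $L$-point before the first exit. The first exit occurs on the arc $\{(s,0): s\le\stoptime(x)\}$, so this $L$-point is $\varphi(T,x)$ with $T\le\stoptime(x)$.
\end{itemize}

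\emph{Sufficiency of (ii).} Let $x\in L$, and let $(t,n)$ be such that the trajectory stays in $N$ up to $(t,n)$. Induct along the ordered steps up to $(t,n)$.
\begin{itemize}
\item For a jump from a point $y\in L\cap G$ to $r(y)\in N$, condition (a) gives $r(y)\in L$.
\item For a flow arc starting at $y\in L\setminus G$, condition (b) gives $L$ on $\varphi([0,T],y)$ for every $T<\stoptime(y)$. The endpoint at $T=\stoptime(y)$ then follows from closedness of $L$ and continuity of $\varphi$, provided it lies in $N$, which it does by hypothesis.
\end{itemize}
Since the domain up to $(t,n)$ consists of finitely many arcs and jumps, the induction terminates. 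Zeno accumulation is harmless here because $(t,n)$ has finite $n$.

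\emph{Sufficiency of (iii).} Suppose $x\in N$ and the trajectory leaves $N$. Let $(t^*,n^*)$ be the first step where it leaves; the exit set $\{(s,m):\psi_x(s,m)\notin N\}$ has a lexicographic infimum by compactness of $N$ and continuity on arcs.
\begin{itemize}
\item If the exit is by a jump, i.e.\ $\gamma_{n^*}(t^*)\in N\cap G$ but $r(\gamma_{n^*}(t^*))\notin N$, then (c) puts $\gamma_{n^*}(t^*)$ in $L$.
\item If the exit is during a flow arc starting at a point $y\in N\setminus G$, then (d) yields $T\le\stoptime(y)$ with $\varphi(T,y)\in L$.
\end{itemize}

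The main obstacle is that in the flow case (d) only asserts that some $\varphi(T,y)$ lies in $L$. It does not assert that $\varphi([0,T],y)\subseteq N$, which (iii) requires. I would choose the minimal such $T$; it exists since $L$ is closed. If the arc left $N$ before that $T$, compactness of $N$ gives a last time the arc is in $N$. I would then argue that (d), applied at intermediate points, together with (b)/(ii) forward propagation of $L$, forces an $L$-point before the first exit. If this fails, I would instead read (d) as implicitly requiring $\varphi([0,T],x)\subseteq N$, as in Definition~\ref{defn:IndexPair}(iii), and verify that reading.
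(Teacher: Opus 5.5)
Your route is the paper's: (ii)$\Leftrightarrow$(a),(b) by induction along the arcs and jumps of $\psi_x$, and (iii)$\Leftrightarrow$(c),(d) by looking at the first exit. The necessity directions are fine, and so is sufficiency of (a),(b). You even supply the closedness-of-$L$ argument at the arc endpoint $\varphi(\stoptime(y),y)$, which (b) with $T<\stoptime(y)$ does not cover and which the paper passes over.

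The genuine gap is the one you flagged, in the flow case of (c),(d)$\Rightarrow$(iii), and your primary remedy cannot close it. Applying (d) at an intermediate point $\varphi(s,y)$ before the exit gives nothing new, because the same $L$-point $\varphi(T,y)$ lying beyond the exit witnesses it. And (b) only propagates $L$ forward from points already in $L$.

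In fact the implication is false for (d) as written. Take $X=[0,10]$ with the flow of $\dot x=x(11-x)$, $G=\{10\}$, $r(10)=5$; the TGC holds on $U=(9,10]$. Let $N=[0,1]\cup[2,3]$, $L=[2,3]$ and $S=\{0\}$.
\begin{itemize}
\item Condition (i) holds: $\Inv(\cl(N\setminus L),\cH)=\Inv([0,1],\cH)=\{0\}\subset[0,1)=\Int(N\setminus L)$.
\item (a) and (c) are vacuous since $N\cap G=\emptyset$.
\item (b) holds because orbits move rightward and $[2,3]$ is a component of $N$.
\item (d) holds because every orbit from $(0,1]$ passes through $[2,3]$ before impact.
\end{itemize}
Yet $x=1$ leaves $N$ immediately without being in $L$, so (iii) fails. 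The pair would even produce the wrong index: $H_*(N,L)\cong H_*([0,1])\neq 0=H_*([0,1],\{1\})$, the latter coming from a genuine index pair for $\{0\}$.

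So your fallback is not a contingency but the required correction. Condition (d) must read ``there exists $T\le\stoptime(x)$ with $\varphi([0,T],x)\subseteq N$ and $\varphi(T,x)\in L$.'' With that reading, sufficiency is immediate from your first-exit decomposition: everything before the exiting arc lies in $N$ by minimality, and (d) covers the arc. Your necessity argument for (d) already yields this stronger form, since the $L$-point supplied by (iii) precedes the first exit.

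Choosing the minimal $T$ with $\varphi(T,y)\in L$ does not help. In the example it is the time to reach $2$, after the orbit has already left $N$ at $1$. This is exactly the unjustified step in the paper's own proof, which sets $T^*=\inf\{t\ge 0:\varphi(t,y)\in L\}$ and asserts without argument that the trajectory stays in $N$ up to $(t_k+T^*,k)$.
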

\begin{proof}
  We show that Definition~\ref{defn:HybridIndexPair}(ii) is equivalent to (a) and (b), and Definition~\ref{defn:HybridIndexPair}(iii) is equivalent to (c) and (d).

  \textit{Equivalence of (ii) and (a)-(b):}
  ($\Rightarrow$) Assume (ii). For (a), let $x \in L \cap G$ with $r(x) \in N$. We have $\psi_x(0,1) = r(x) \in N$ and $\psi_x(0,0) = x \in L$. Positive invariance then yields $r(x) \in L$. For (b), let $x \in L \setminus G$ and $\varphi([0,T],x) \subseteq N$. For $t \in [0,T]$, we have $\psi_x(t,0) = \varphi(t,x)$. Condition (ii) then implies $\varphi([0,T],x) \subseteq L$.

  ($\Leftarrow$) Assume (a) and (b). Let $x \in L$ and $(T,k) \in \dom(\psi_x)$ be such that $\psi_x(s,m) \in N$ for all $(s,m) \leq (T,k)$. We prove $\psi_x(t,n) \in L$ for all $(t,n) \leq (T,k)$ by induction on $n$.
  For $n=0$: if $x \in G$, then $\stoptime(x) = 0$, so $\psi_x(0,0) = x \in L$ and the claim holds. If $x \notin G$, condition (b) gives $\varphi([0,t_1],x) \subset L$ where $t_1 = \min(T, \stoptime(x))$.
  For the inductive step, assume the claim holds for crossings $0,\ldots,n-1$. Let $y = \psi_x(t_{n-1}, n-1) \in L \cap G$ (by the inductive hypothesis). Since $r(y) = \psi_x(0,n) \in N$, condition (a) gives $r(y) \in L$. If $r(y) \notin G$, condition (b) applied to the segment from $r(y)$ yields $\psi_x(t,n) \in L$ for $t \in [0, t_n - t_{n-1}]$. If $r(y) \in G$, the segment at index $n$ is the singleton $\{r(y)\} \subset L$, and the induction continues to index $n+1$.

  \textit{Equivalence of (iii) and (c)-(d):}
  ($\Rightarrow$) Assume (iii). For (c), let $x \in N \cap G$ with $r(x) \notin N$. We have $\psi_x(0,0) = x \in N$ and $\psi_x(0,1) = r(x) \notin N$. Condition (iii) guarantees an exit time $(T,k)$ such that $\psi_x(T,k) \in L$. Since the trajectory leaves $N$ immediately, we have $(T,k) = (0,0)$. Thus, $\psi_x(0,0) = x \in L$.
  For (d), let $x \in N \setminus G$ and assume $\varphi([0,\stoptime(x)], x) \not\subseteq N$. Then $\psi_x([0,\infty), 0) \not\subseteq N$. Condition (iii) yields $(T,0)$ with $T < \stoptime(x)$ such that $\psi_x(T,0) = \varphi(T,x) \in L$.

  ($\Leftarrow$) Assume (c) and (d). Let $x \in N$ and suppose $\psi_x([0,\infty)) \not\subseteq N$. We show that the trajectory exits through $L$. If the exit occurs at a jump $(T,k)$, then $\psi_x(T,k) \in N \cap G$ while $r(\psi_x(T,k)) \notin N$. Condition (c) gives $\psi_x(T,k) \in L$. If the exit occurs during the continuous segment indexed by $k$, let $y = \psi_x(t_{k},k)$. Then $\varphi([0, \stoptime(y)), y) \not\subseteq N$. Condition (d) yields $t_0 < \stoptime(y)$ with $\varphi(t_0,y) \in L$, so the set $\setdef{t \geq 0}{\varphi(t,y) \in L}$ is nonempty. Let $T^* = \inf\setdef{t \geq 0}{\varphi(t,y) \in L}$; this infimum is finite since $t_0$ is an upper bound. Since $L$ is closed and $t \mapsto \varphi(t,y)$ is continuous, $\varphi(T^*,y) \in L$. Thus $\psi_x(t_k + T^*,k) \in L$ with the trajectory in $N$ for $(t,n) \leq (t_k + T^*,k)$.
\end{proof}

The following characterization relates hybrid index pairs to the relaxed system, preparing us for the suspension construction.
\begin{lem}
  \label{lem:RelaxedHybridPair}
  If $(N,L)$ is a hybrid index pair for $S$ in $\cH$, then $(N',L')$ is a hybrid index pair for $S'$ in the relaxed system $\cH'$, where $N' = \pi_0\inv(N) \subset X'$, $L' = \pi_0\inv(L) \subset X'$, and $S' = \pi_0\inv(S) \subset X'$ are preimages under $\pi_0 : X' \to X$.
\end{lem}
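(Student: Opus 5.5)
I will verify the three conditions of Definition~\ref{defn:HybridIndexPair} for $(N',L')$ in $\cH'$. Rather than working with the trajectory definition directly, I will use the characterization in Proposition~\ref{prop:AlternativeHybridIndexPair}, applied to $\cH'=(X',\varphi',G',r')$. The guiding observation is that $\pi_0$ intertwines the two systems. A point $\iota(x)$ with $x\notin G$ flows under $\varphi'$ exactly as $x$ does under $\varphi$ until it reaches $\iota(G)$. A point $(g,s)$ on the cylinder moves vertically, and $\pi_0$ sends this whole vertical segment to $g$. Finally $r'(g,1)=\iota(r(g))$, so $\pi_0\circ r'=r\circ\pi_0$ on $G'$. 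Hence the image under $\pi_0$ of a relaxed trajectory is the corresponding hybrid trajectory, with each jump inserted as a pause of length one.

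Compactness of $N'$ and $L'$ is clear, since the preimage of a compact set under $\pi_0$ is compact: it equals $\iota(N)\cup (N\cap G)\times[0,1]$. For condition (i), I first show $\Inv(\cl(N'\setminus L'),\cH')=\pi_0\inv(\Inv(\cl(N\setminus L),\cH))$. The steps are:
\begin{enumerate}[(1)]
\item Check that $\cl(N'\setminus L')=\pi_0\inv(\cl(N\setminus L))$. This is a set-level statement about the cylinder, using that $\pi_0$ is continuous, closed and surjective.
\item Project full relaxed trajectories by $\pi_0$ to full hybrid trajectories.
\item Conversely, lift hybrid trajectories by inserting the vertical segments.
\end{enumerate}
Together these give the displayed equality, and the right-hand side is $\pi_0\inv(S)=S'$. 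Next, $S'\subset\Int(N'\setminus L')$ follows because $N'\setminus L'=\pi_0\inv(N\setminus L)$ and $\pi_0\inv$ of an open set is open, so $\pi_0\inv(\Int(N\setminus L))$ is an open set containing $S'$.

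For conditions (a)--(d) I split by where the point lies.
\begin{itemize}
\item \textbf{On $G'$.} Conditions (a) and (c) for $(g,1)\in G'$ reduce to the corresponding conditions for $g\in G$, because $r'(g,1)=\iota(r(g))$ and $(g,1)\in L'$ if and only if $g\in L$.
\item \textbf{Off $G'$, on the cylinder.} For $(g,s)$ with $s<1$, the $\varphi'$-trajectory up to $\stoptime'$ stays in $\pi_0\inv(g)$. So (b) holds trivially, and the hypothesis of (d) can never occur.
\item \textbf{Off $G'$, in $\iota(X)$.} For $\iota(x)$, the trajectory up to $\stoptime'(\iota(x))=\stoptime(x)+1$ is the $\varphi$-segment of $x$ followed by the vertical segment over the impact point $g$. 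Condition (b) then follows from (b) for $x$, together with the fact that $g\in L$ whenever $g\in N$; the latter comes from (b) plus closedness of $L$, or from $x\in G$ directly. Condition (d) follows from (d) for $x$, since leaving $N'$ along the vertical segment is impossible once $g\in N$.
\end{itemize}

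The main obstacle is the invariant-set identification in condition (i), specifically step (3): lifting backward hybrid trajectories to backward relaxed trajectories. One must make sure that the points inserted on the cylinder lie in $\cl(N'\setminus L')$, and that no extra full $\cH'$-trajectories exist that stay on the cylinder forever. The latter cannot happen, because $\stoptime'\le1$ there. I will handle this by noting that $(g,s)$ lies in $\cl(N'\setminus L')$ exactly when $g\in\cl(N\setminus L)$, and then arguing via Lemma~\ref{lem:GuardInvariant}-style reasoning at the level of $X'$.

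A secondary subtlety is the boundary case $T=\stoptime(x)$ in (b) and (d), that is, the treatment of the impact point itself. I will resolve it using closedness of $L$ and continuity of $\varphi$.
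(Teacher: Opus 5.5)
Your plan follows the paper's proof:
\begin{itemize}
\item compactness of the preimages;
\item for condition (i), projecting full $\cH'$-trajectories by $\pi_0$ and lifting hybrid trajectories through the cylinder;
\item the open set $\pi_0\inv(\Int(N\setminus L))$ for the interior condition;
\item the same case split ($G'$, cylinder, $\iota(X)$) for (a)--(d) of Proposition~\ref{prop:AlternativeHybridIndexPair}.
\end{itemize}
At $\iota(x)$ in (b) and (d), you follow the $\varphi'$-trajectory through the vertical segment over the impact point, up to $\stoptime'(\iota(x))=\stoptime(x)+1$, and use closedness of $L$ there. This is more careful than the paper, which argues as if $\stoptime'(\iota(x))=\stoptime(x)$.

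There is one false step. The equality $\cl(N'\setminus L')=\pi_0\inv(\cl(N\setminus L))$ in (1) fails in general, and so does your claim that $(g,s)\in\cl(N'\setminus L')$ exactly when $g\in\cl(N\setminus L)$. Since $G\times(0,1]$ is open in $X'$, a point $(g,s)$ with $s>0$ lies in $\cl(\pi_0\inv(A))$ if and only if $g$ lies in the closure, taken in $G$, of $A\cap G$. That can fail even when $g\in\cl(A)$.

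For a counterexample, take the cubic reset example of Section~\ref{subsec:CubicMap} with $N=[1/2,1]\times[-1,1]$ and $L=G$. This is a hybrid index pair for $S=\emptyset$; condition (c) forces $G\subset L$. Yet $\cl(N'\setminus L')=\iota(N)$, while $\pi_0\inv(\cl(N\setminus L))=\iota(N)\cup(G\times[0,1])$.

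Closedness and surjectivity of $\pi_0$ only guarantee that each point of $\cl(A)$ has \emph{some} preimage in $\cl(\pi_0\inv(A))$, namely $\iota(g)$. Equality of the full preimage would need $\pi_0$ to be open, which it is not.

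You never actually need the equality:
\begin{itemize}
\item The projection direction uses only $\cl(N'\setminus L')\subseteq\pi_0\inv(\cl(N\setminus L))$, which is immediate from continuity.
\item For the lifting direction, a full hybrid trajectory through a point of $S$ stays in $S$. Its lift, cylinder segments included, therefore lies in $\pi_0\inv(S)\subseteq\pi_0\inv(N\setminus L)=N'\setminus L'$, exactly as in the paper.
\end{itemize}
So the ``main obstacle'' you anticipate disappears. With (1) replaced by the one-sided inclusion, your argument goes through.
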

\begin{proof}
  We use the characterization of Proposition~\ref{prop:AlternativeHybridIndexPair}.

  Since $X'$ is compact and $\pi_0$ is continuous, $N'$ and $L'$ are compact sets with $L' \subset N'$.
  We show that $\Inv(\cl(N' \setminus L'), \cH') = S'$. Let $z \in \Inv(\cl(N' \setminus L'), \cH')$. By definition, there exists a full hybrid trajectory $\psi'_z$ through $z$ contained in $\cl(N' \setminus L')$. The projection $\pi_0 \circ \psi'_z$ defines a full hybrid trajectory for $\pi_0(z)$ in $\cH$. Since $\psi'_z$ is contained in $\cl(N' \setminus L')$, $\pi_0(\psi'_z)$ is contained in $\pi_0(\cl(N' \setminus L')) \subseteq \cl(N \setminus L)$. Thus $\pi_0(z) \in \Inv(\cl(N \setminus L), \cH) = S$, which implies $z \in \pi_0\inv(S) = S'$.
  Conversely, let $z \in S'$. Since $S$ is invariant, $x = \pi_0(z)$ admits a full hybrid trajectory $\psi_x$ contained in $S$. Define $\psi'_z(t,n) = (\psi_x(t,n), s_n(t))$ where $s_n(t)$ tracks the time within each flow segment; this satisfies $\pi_0(\psi'_z(t,n)) = \psi_x(t,n)$ and is a full trajectory in $\cH'$ contained in $S' \subset \cl(N' \setminus L')$.
  For the interior condition, $\pi_0\inv(\Int(N \setminus L))$ is open by continuity and contained in $\pi_0\inv(N \setminus L) = N' \setminus L'$, hence $\pi_0\inv(\Int(N \setminus L)) \subset \Int(N' \setminus L')$. Since $S \subset \Int(N \setminus L)$, we have $S' \subset \Int(N' \setminus L')$.

  We check (a). Let $x' \in L' \cap G'$ with $r'(x') \in N'$. Since $G' = G \times \{1\}$, we have $x' = (g,1)$ for some $g \in G$. The condition $x' \in L'$ means $(g,1) \in \pi_0\inv(L)$, which implies $g \in L$. The reset in the relaxed system is $r'(g,1) = \iota(r(g))$. The condition $r'(x') \in N'$ means $\iota(r(g)) \in \pi_0\inv(N)$, hence $r(g) \in N$. Since $g \in L \cap G$ and $r(g) \in N$, condition (a) for $(N,L)$ in $\cH$ yields $r(g) \in L$. Thus $\iota(r(g)) \in \iota(L) \subset \pi_0\inv(L) = L'$.

  We check (b). Let $x' \in L' \setminus G'$ and assume $\varphi'([0,T],x') \subseteq N'$ for some $T < \stoptime'(x')$. Two cases arise. If $x' = \iota(x)$ for some $x \in L \setminus G$, then $\varphi'(t,\iota(x)) = \iota(\varphi(t,x))$ for $t < \stoptime(x)$. The condition $\varphi'([0,T],x') \subseteq N'$ implies $\iota(\varphi([0,T],x)) \subseteq \pi_0\inv(N)$, hence $\varphi([0,T],x) \subseteq N$. Condition (b) for $(N,L)$ in $\cH$ then gives $\varphi([0,T],x) \subseteq L$, which yields $\varphi'([0,T],x') = \iota(\varphi([0,T],x)) \subseteq \iota(L) \subset L'$. If $x' = (g,s)$ for $g \in L \cap G$ and $s \in (0,1)$, then $\varphi'(t,(g,s)) = (g,s+t)$ for $t$ such that $s+t < 1$. Since $g \in L$ and $(g,s+t) \in \pi_0\inv(L) = L'$ for all such $t$, the trajectory segment remains in $L'$.

  We check (c). Let $x' \in N' \cap G'$ with $r'(x') \notin N'$. Since $x' \in G' = G \times \{1\}$, write $x' = (g,1)$ for some $g \in G$. The condition $x' \in N'$ means $(g,1) \in \pi_0\inv(N)$, hence $g \in N \cap G$. The condition $r'(x') \notin N'$ means $\iota(r(g)) \notin \pi_0\inv(N)$, hence $r(g) \notin N$. Condition (c) for $(N,L)$ in $\cH$ then gives $g \in L$. Thus $(g,1) \in \pi_0\inv(L) = L'$.

  We check (d). Let $x' \in N' \setminus G'$ and assume $\varphi'([0,\stoptime'(x')),x') \not\subseteq N'$. We show there exists $T < \stoptime'(x')$ such that $\varphi'(T,x') \in L'$. If $x' = \iota(x)$ for some $x \in N \setminus G$, then $\varphi'(t,\iota(x)) = \iota(\varphi(t,x))$ for $t < \stoptime(x)$. Since $\stoptime'(\iota(x)) = \stoptime(x)$ and $\varphi'([0,\stoptime(x)),\iota(x)) \not\subseteq N'$, we have $\varphi([0,\stoptime(x)),x) \not\subseteq N$. By condition (d) for $(N,L)$ in $\cH$, there exists $T < \stoptime(x)$ such that $\varphi(T,x) \in L$. Thus $\varphi'(T,\iota(x)) = \iota(\varphi(T,x)) \in L'$. If $x' = (g,s)$ for $g \in N \cap G$ and $s \in (0,1)$, then $\varphi'([0,\stoptime(x')],x') \subset \setof{\pi_0(x')}\times[0,1] \subset N'$, so this case is not possible.
\end{proof}

\subsection{Suspension of Hybrid Index Pairs}

We now establish a correspondence between hybrid index pairs and classical index pairs in the suspension semiflow. We break down the main result into three lemmas.
\begin{lem}
  \label{lem:SaturatedOpenSet}
  Let $(N,L)$ be a hybrid index pair for $S$ with $S \subset \Int(N \setminus L)$. Then there exists an open set $W' \subset X'$ with $S' \subset W' \subset N' \setminus L'$ such that $\pi\inv(\pi(W')) = W'$ and $\pi(W')$ is open in $\susp X$ with $\pi(W') \cap \susp L = \emptyset$.
\end{lem}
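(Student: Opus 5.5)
We build $W'$ as a saturated open set around $S' = \pi_0\inv(S)$, modelled on Case 2 of the proof of Theorem~\ref{thm:IsolatedSet}. Saturation in $X'$ with respect to $\pi$ only concerns the identifications $(g,1) \sim \iota(r(g))$ for $g \in G$. A set $W' \subset X'$ satisfies $\pi\inv(\pi(W')) = W'$ exactly when, for every $g \in G$, we have $(g,1) \in W'$ if and only if $\iota(r(g)) \in W'$.

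\textbf{Step 1: choose the open set in $X$.} Set $O = \Int(N \setminus L)$, which is an open set in $X$ containing $S$. Since $S$ is invariant, $r(S \cap G) \subset S \subset O$. By continuity of $r$, the set $V = G \cap O \cap r\inv(O)$ is relatively open in $G$ and contains $S \cap G$.

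\textbf{Step 2: define $W'$.} Let
\[
W' = \iota(O) \cup \bigl((V \times (0,1)) \cup \setof{(g,1) : g \in V}\bigr) \cup \bigl(\pi_0\inv(O) \cap (G \times [0,\epsilon))\bigr).
\]
A cleaner choice is $W' = \pi_0\inv(O) \setminus \bigl((G\setminus V) \times \setof{1}\bigr)$, adjusted for saturation. The nontrivial requirement is that if $\iota(y) \in W'$ with $y = r(g)$, then $(g,1) \in W'$. With $W' = \pi_0\inv(O) \setminus ((G\setminus V)\times\setof{1})$ this forces $V \supseteq G \cap r\inv(O)$, which is false in general. The fix is to shrink $O$ itself: replace $O$ by
\[
O_1 = O \setminus r\bigl(G \setminus V\bigr).
\]
This set is open because $G \setminus V$ is closed in the compact set $G$, so it is compact, and hence its image under $r$ is compact. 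It still contains $S$. Indeed, if $s = r(g) \in S$ with $g \in G \setminus V$, then $g \notin O \cap r\inv(O)$, whereas $r(g) = s \in O$. This forces $g \notin O$, but a point of $G$ sent into $S$ need not itself lie in $S$. So we instead use backward invariance: choose the backward hybrid trajectory of $s$ inside $S$. If its last jump comes from some $g \in S \cap G \subset V$, then $s$ has a preimage in $V$. The preimage may fail to be unique, however. This is the main obstacle. To handle it, define $W' = \pi_0\inv(O_2) \cap \pi\inv\pi(\cdots)$, where
\[
O_2 = \setof{x \in O : r\inv(x) \cap G \subset V}.
\]
This set is open: its complement in $O$ is $O \cap r(G \setminus V)$, which is relatively closed. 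It contains every point of $S$ whose $r$-preimages all lie in $V$. If some point of $S$ has a preimage $g \notin O$, we need an extra step, namely that the isolation condition forbids such non-invariant preimages; failing that, we weaken saturation by adding $(g,1)$ and an open collar $(g', s)$ with $s$ near $1$. Since $g \notin L$ is still needed, we use $g \in r\inv(O)$ together with condition (c) of Proposition~\ref{prop:AlternativeHybridIndexPair} to place such $g$ in $N$, or in a collar outside $L'$.

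\textbf{Step 3: verify the properties.} Once $W' = \pi_0\inv(O_2) \cup (V \times (0,1])$ is saturated, it is open in $X'$ and $\pi$ is a quotient map, so $\pi(W')$ is open. By construction $W' \subset \pi_0\inv(N \setminus L) = N' \setminus L'$, and $S' \subset W'$. Finally, $\pi(W') \cap \susp L = \emptyset$ because $\susp L = \pi(L')$ and $W' \cap L' = \emptyset$, so any overlap would have to come through an identification $(g,1) \sim \iota(r(g))$. Such an overlap would require $g \in L$ or $r(g) \in L$, and both are excluded because $V \subset O \cap r\inv(O)$.
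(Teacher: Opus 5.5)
\textbf{There is a genuine gap: your proposal never produces a set $W'$ with all the required properties, and in fact no argument can, because the statement as written is false.}

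\emph{Where your construction breaks.} The sets $O_1$ and $O_2$ coincide. Since $V=G\cap O\cap r\inv(O)$ and $r\bigl(\setdef{g\in G}{r(g)\notin O}\bigr)$ misses $O$, both equal $O\setminus r(G\setminus O)$. As you observe yourself, this set omits every $y\in S$ that has an $r$-preimage in $G\setminus O$.

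The set used in Step~3, $\pi_0\inv(O_2)\cup(V\times(0,1])$, has the same defect, and it is not saturated either. Take $g\in O_2\cap G$ with $r(g)\in L$; nothing in Proposition~\ref{prop:AlternativeHybridIndexPair} forbids this. Then $(g,1)\in W'$ but $\iota(r(g))\notin W'$, and $\pi(W')$ even meets $\susp L$.

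Your proposed rescues also fail. Isolation says nothing about guard points outside $N$ that reset into $S$. Condition (c) of Proposition~\ref{prop:AlternativeHybridIndexPair} only constrains points already in $N\cap G$, so it cannot place such a $g$ in $N$.

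\emph{Why the gap cannot be closed.} Saturation together with $\iota(y)\in W'$ forces $(g,1)\in W'$ for every $g\in G$ with $r(g)=y$. Hence $W'\subset N'$ requires $r\inv(S)\cap G\subset N$, which is not among the hypotheses. A counterexample:
\begin{itemize}
\item Let $X=D\sqcup[0,1]$, where the closed unit disk $D$ carries $\dot z=-z$ and the interval carries $\dot x=1$.
\item Set $G=\setof{1}$ and $r(1)=0\in D$.
\item The TGC holds on $(1/2,1]$, $S=\setof{0}$ is isolated, and $(N,L)=(D_{1/2},\emptyset)$ is a hybrid index pair for $S$, where $D_{1/2}$ is the closed disk of radius $1/2$.
\item But $\pi(\iota(0))=\pi(1,1)$ and $(1,1)\notin N'=\iota(D_{1/2})$, so no $W'$ as in the lemma exists.
\end{itemize}
Indeed $\incl(0)\notin\Int(\susp N)$, since the points $\pi(1,s)$ with $s<1$ lie outside $\susp N$ and converge to $\incl(0)$.

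\emph{Comparison with the paper.} The paper takes $U$ open with $S\subset U\subset\Int(N\setminus L)$ and puts $V=r\inv(U)\cap G$, without intersecting with $U$. It then sets $W'=\iota(U)\cup((U\cap G)\times[0,\delta))\cup(V\times(1-\delta,1])$. Saturation is automatic, because $(g,1)\in W'$ exactly when $r(g)\in U$. Condition (a) gives $V\cap L=\emptyset$, hence $W'\cap L'=\emptyset$.

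This is essentially your closing idea of adding $(g,1)$ together with a collar. After replacing $[0,\delta)$ by $[0,1)$, so that $(S\cap G)\times[\delta,1-\delta]\subset S'$ is covered, it correctly yields an open saturated $W'\supset S'$ with $\pi(W')\cap\susp L=\emptyset$. It does not yield $W'\subset N'$: the collar over $V\setminus N$ leaves $N'$, and the paper only verifies $W'\cap L'=\emptyset$.

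\emph{A salvage via your own $O_1$.} Under the extra hypothesis $r\inv(S)\cap G\subset O$, your $O_1$ does contain $S$. Then
\[
W'=\iota(O_1)\cup\bigl((O_1\cap G)\times(0,1)\bigr)\cup\bigl((r\inv(O_1)\cap G)\times(1-\delta,1]\bigr)
\]
is open, saturated, contains $S'$, and lies in $\pi_0\inv(O)\subset N'\setminus L'$, because a guard point outside $O$ cannot map into $O_1$. The condition $\pi(W')\cap\susp L=\emptyset$ then follows from saturation and $W'\cap L'=\emptyset$.
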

\begin{proof}
  Since $S \subset \Int(N \setminus L)$, continuity of $r$ and compactness of $S$ yield the existence of a neighborhood $U$ of $S$ such that $r(S \cap U) \subset \Int(\cl(N\setminus L))$ and $S \subset U \subset \Int(N\setminus L)$. Define $V = r\inv(U) \cap G$. Since $r$ is continuous and $U$ is open, $V$ is open in $G$. Note that $S \cap G \subset V$ since $r(S \cap G) \subset S \subset U$. Define the set $W' \subset X'$ by
  \[
    W' = \iota(U) \cup ((U \cap G) \times [0,\delta)) \cup (V \times (1-\delta,1]), \quad \delta \in (0,1/3).
  \]
  The set $W'$ is open in $X'$ as a union of open sets. We have $S' \subset W'$ since $\iota(S) \subset \iota(U)$ and $(S \cap G) \times (0,1] \subset V \times (0,1]$.

  Now we verify that $\pi\inv(\pi(W')) = W'$. The inclusion $W' \subseteq \pi\inv(\pi(W'))$ is automatic, so it suffices to show $\pi\inv(\pi(W')) \subseteq W'$. If $z' \in \pi\inv(\pi(W'))$, then $\pi(z') = \pi(w')$ for some $w' \in W'$. Since the only nontrivial identifications are $(g,1) \sim \iota(r(g))$, there are two nontrivial cases. If $w' = (g,1) \in V\times\setof{1}$ and $z' = \iota(r(g))$. Since $g \in V = r\inv(U) \cap G$, it implies $r(g) \in U$. Thus $z' = \iota(r(g)) \in \iota(U) \subset W'$. If $w' = \iota(x)$ for some $x \in U$ and $z' = (g,1) \in G'$ with $r(g) = x$. Then $r(g) = x \in U$, so $g \in r\inv(U)$. Since $g \in G$, we have $g \in r\inv(U) \cap G = V$. Therefore $z' = (g,1) \in V \times \{1\} \subset V \times (0,1] \subset W'$. In either case, $z' \in W'$, therefore $\pi\inv(\pi(W')) \subseteq W'$.

  Since $W'$ is open and $\pi\inv(\pi(W')) = W'$, the set $\pi(W')$ is open in $\susp X$. To see that $W' \cap L' = \emptyset$, note that $\iota(U) \cap \iota(L) = \emptyset$ since $U \cap L = \emptyset$. Moreover, if $g \in V \cap L$, then $g \in L \cap G$ and $r(g) \in U \subset N$. By Proposition~\ref{prop:AlternativeHybridIndexPair}(a), this implies $r(g) \in L$, but $r(g) \in U \subset N \setminus L$, a contradiction. Thus $W' \cap L' = \emptyset$, hence $\pi(W') \cap \susp L = \emptyset$.
\end{proof}

\begin{lem}
  \label{lem:PositiveInvarianceSuspension}
  If $(N,L)$ is a hybrid index pair for $S$ in $\cH$, then $\susp L$ is positively invariant in $\susp N$ under $\Phi_\cH$.
\end{lem}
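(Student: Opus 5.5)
We need to show that if $\tilde{x} \in \susp L$ and $\Phi_\cH([0,T],\tilde{x}) \subseteq \susp N$, then $\Phi_\cH([0,T],\tilde{x}) \subseteq \susp L$. The plan is to lift the suspension trajectory to a hybrid trajectory of the relaxed system $\cH'$ and use the fact that $(N',L')$ is a hybrid index pair there, by Lemma~\ref{lem:RelaxedHybridPair}.

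\textbf{Step 1: lift the starting point and the trajectory.} Pick a representative $z' \in \pi\inv(\tilde{x})$ with $z' \in L' = \pi_0\inv(L)$. This is possible because $\susp L = \pi(\pi_0\inv(L))$. By Lemma~\ref{lem:RelaxedOrbit}, every point $\Phi_\cH(t,\tilde{x})$ with $t \in [0,T]$ equals $\pi(\psi'_{z'}(t,n))$ for some $(t,n) \in \dom(\psi'_{z'})$. Here the relaxed system has no Zeno behaviour on bounded time, since each jump costs one unit of time on the cylinder. Hence $\psi'_{z'}$ covers the whole interval $[0,T]$.

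\textbf{Step 2: reduce membership in $\susp N$ to membership in $N'$.} We need that $\pi(w') \in \susp N$ forces $w' \in N'$, and similarly for $L$. This is not automatic, because of the identification $(g,1) \sim \iota(r(g))$. We handle it by Lemma~\ref{lem:InclInvSusp}: $\pi(w') \in \susp N$ means $\pi_0(w') \in N \cup r(N\cap G)$ on $\incl(X)$, and $\pi_0(w')\in N$ on the open cylinder part. The problematic points are $w' = \iota(r(g))$ with $g \in N\cap G$ but $r(g) \notin N$, and the analogous points for $L$.

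Along the lifted trajectory, such a point $\iota(r(g))$ is only reached immediately after the jump from $(g,1)$. At that jump we have $g \in N \cap G$ and $r(g)\notin N$, so Proposition~\ref{prop:AlternativeHybridIndexPair}(c) gives $g \in L$. Consequently $\pi(\iota(r(g))) = \pi(g,1) \in \susp L$ whenever it is in $\susp N$.

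We therefore argue segment by segment along $\psi'_{z'}$, and may choose at each jump time the representative in $X'$ that is most convenient. While the relaxed trajectory stays in $N'$, positive invariance of $L'$ in $N'$ (Definition~\ref{defn:HybridIndexPair}(ii) for $\cH'$) keeps it in $L'$, so its projection lies in $\susp L$.

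\textbf{Step 3: handle the case where the trajectory leaves $N'$.} Suppose the relaxed trajectory first leaves $N'$ while its image still lies in $\susp N$. By the analysis above, this can only happen through a jump $(g,1)\mapsto\iota(r(g))$ with $r(g)\notin N$. In that case $g\in L$, and the image point $\pi(g,1)$ lies in $\susp L$.

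After this, however, the trajectory continues from $r(g)\notin N$. Because $\tilde{x}$'s trajectory stays in $\susp N$, the continuing points must be of the form $r(g_2)$ with $g_2\in N\cap G$, which is not generic. We must show this continuation remains in $\susp L$, or else that it exits $\susp N$.

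\textbf{Main obstacle.} The hard part is exactly this subtlety: the suspension $\susp N$ can contain points $\incl(y)$ with $y \notin N$, namely $y \in r(N\cap G)$. I would first try to show that a trajectory passing through $\incl(y)$ with $y\notin N$ must leave $\susp N$ immediately, unless $y\in G$. This should follow because $\incl(y)$ has a $\susp N$-neighbourhood only along the cylinder direction, so flowing forward along $\varphi$ from $y\notin N$ exits at once.

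For the degenerate case $y\in G$, we get $y=r(g)$ and $y\in G$ simultaneously. Here the jump continues instantaneously and we iterate the argument, applying condition (a) or (c) at each successive jump. If the direct approach fails, the fallback is to replace $\susp N, \susp L$ by the images $\pi(N'),\pi(L')$ and verify that the statement is really about these sets.
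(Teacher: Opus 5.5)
\paragraph{Verdict: genuine gap.}
Your Steps~1--2 follow the paper's route: lift to $\cH'$, invoke Lemma~\ref{lem:RelaxedHybridPair}, and induct over jumps. You also correctly identify the point that route glosses over. $\susp N=\pi(N')$ contains the points $\incl(y)$ with $y\in E:=r(N\cap G)\setminus N$, so the lifted orbit can leave $N'$ while its image stays in $\susp N$. The paper's proof gets past this point by inferring $r(g)\in N$ from $\pi(\iota(r(g)))\in\susp N$. But $\pi(\iota(r(g)))=\pi(g,1)$ and $g\in L\subseteq N$, so that membership is automatic, and the inference fails in the example below. Your worry is therefore legitimate.

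\paragraph{Why your Step~3 does not close the gap.}
Step~3 is only a plan, and both claims it rests on are false. A point $\iota(r(g))$ with $r(g)\notin N$ need not be reached only immediately after a jump. An orbit through $\incl(y)$ with $y\notin N\cup G$ need not leave $\susp N$ at once. Whenever $r(N\cap G)$ contains an arc of a $\varphi$-orbit, a single flow segment runs through $\incl(E)\subseteq\susp N$ with no jump at all.

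Concretely, take:
\begin{itemize}
\item $\dot u=1$, $\dot v=0$ on $[0,1]^2$;
\item $G=\{1\}\times[0,1]$ and $r(1,v)=(0.4+0.2v,\,0.5)$;
\item $N=([0.9,1]\times[0,0.2])\cup\bigl(([0.38,0.46]\times[0.48,0.52])\setminus B((0.42,0.5),0.005)\bigr)$, with the open disk removed;
\item $L$ the explicit set forced by (a)--(d) of Proposition~\ref{prop:AlternativeHybridIndexPair}.
\end{itemize}
This is a hybrid index pair for $S=\emptyset$. The orbit of $\incl(0.42,0.5)\in\susp L$ crosses the removed disk while staying inside $\susp N$. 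Calling this ``not generic'' is not an argument. Your fallback is also empty, since $\susp N$ is by definition $\pi(\pi_0\inv(N))=\pi(N')$.

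\paragraph{The missing idea.}
Apply (c) and (a) to all of $E$, not only at the jump instant.
\begin{itemize}
\item Condition (c) gives $E\subseteq r(L\cap G)$.
\item Condition (a) gives $r(L\cap G)\cap N\subseteq L$.
\item Hence Lemma~\ref{lem:InclInvSusp} yields $\incl\inv(\susp N)=N\sqcup E$ and $\incl\inv(\susp L)=L\sqcup E$.
\item On the cylinder, $(\susp N\setminus\susp L)\cap\pi(G\times(0,1))=\pi\bigl(((N\setminus L)\cap G)\times(0,1)\bigr)$.
\end{itemize}
So $\susp N\setminus\susp L$ lies entirely over $N\setminus L$, and the points outside $N$ that worried you already lie in $\susp L$.

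\paragraph{How to finish.}
Argue directly in $\susp X$. Suppose $\Phi_\cH(t_1,\tilde x)\notin\susp L$ for some $t_1\le T$. Let $t_0<t_1$ be the last time in $[0,t_1]$ at which the orbit meets the closed set $\susp L$. Then the orbit lies in $\susp N\setminus\susp L$ on $(t_0,t_1]$.
\begin{itemize}
\item If $\Phi_\cH(t_0,\tilde x)=\pi(g,s)$ with $0<s<1$, then $g\in L$. The orbit stays in $\pi(\{g\}\times(0,1))\subseteq\susp L$ for a while.
\item Otherwise $\Phi_\cH(t_0,\tilde x)=\incl(x)$ with $x\in L\sqcup E$. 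Just after $t_0$ the orbit lies over $N\setminus L$, and $N$ is closed, so $x\in N$. Hence $x\in L$, because $E\cap N=\emptyset$.
\begin{itemize}
\item If $x\in G$, the orbit next runs up the fiber $\pi(\{x\}\times(0,1))\subseteq\susp L$.
\item If $x\notin G$, then $\varphi([0,\delta],x)\subseteq N$ for small $\delta<\stoptime(x)$, and (b) gives $\varphi(\delta,x)\in L$.
\end{itemize}
\end{itemize}
Every case contradicts the choice of $t_0$. No lift to $\cH'$ and no induction over jumps is needed.
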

\begin{proof}
  Let $\tilde{z} \in \susp L$ and assume $\Phi_\cH([0,T], \tilde{z}) \subseteq \susp N$ for some $T > 0$. We must show that $\Phi_\cH([0,T], \tilde{z}) \subseteq \susp L$.

  Let $z' \in L'$ be a lift of $\tilde{z}$ with $\pi(z') = \tilde{z}$. By Lemma~\ref{lem:RelaxedOrbit}, $\pi(\psi'_{z'}(t,n)) = \Phi_\cH(t, \tilde{z})$ for all $(t,n) \in \dom(\psi'_{z'})$. We prove that $\psi'_{z'}(t,n) \in L'$ for all $(t,n) \in \dom(\psi'_{z'})$ with $t \leq T$ by induction on $n$. If the trajectory does not reach $G'$ during $[0,T]$, then by Lemma~\ref{lem:RelaxedHybridPair}, condition (b) of Proposition~\ref{prop:AlternativeHybridIndexPair}, since $z' \in L'$ and $\varphi'([0,T],z') \subseteq N'$, we have $\varphi'([0,T],z') \subseteq L'$.

  Assume $\psi'_{z'}(t,m) \in L'$ for all $t \leq T$ and $m \leq n-1$. Consider the trajectory at the $n$-th crossing. Let $t_n$ be the time at which $\psi'_{z'}$ reaches $G'$ for the $n$-th time, say at $(g,1)$ for some $g \in G$. By hypothesis, $\psi'_{z'}(t_n, n-1) = (g,1) \in L'$, which means $g \in L \cap G$. Since $\Phi_\cH([0,T], \tilde{z}) \subseteq \susp N$, we have $\pi(\iota(r(g))) = \Phi_\cH(t_n, \tilde{z}) \in \susp N$, therefore $r(g) \in N$. By Proposition~\ref{prop:AlternativeHybridIndexPair}(a), since $g \in L \cap G$ and $r(g) \in N$, we have $r(g) \in L$. Therefore $\psi'_{z'}(0,n) = \iota(r(g)) \in L'$. If the trajectory does not reach $G'$ again before time $T$, condition (b) of Proposition~\ref{prop:AlternativeHybridIndexPair} guarantees that the trajectory segment from $\iota(r(g))$ remains in $L'$.

  Therefore, $\psi'_{z'}(t,n) \in L'$ for all $(t,n) \in \dom(\psi'_{z'})$ with $t \leq T$, which yields $\Phi_\cH([0,T], \tilde{z}) \subseteq \susp L$.
\end{proof}

\begin{lem}
  \label{lem:ExitSetSuspension}
  If $(N,L)$ is a hybrid index pair for $S$ in $\cH$, then $\susp L$ is an exit set for $\susp N$ under $\Phi_\cH$.
\end{lem}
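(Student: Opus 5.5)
We argue in the same way as Lemma~\ref{lem:PositiveInvarianceSuspension}: lift the suspension trajectory to the relaxed system $\cH'$ and use the exit-set conditions (c) and (d) of Proposition~\ref{prop:AlternativeHybridIndexPair}. By Lemma~\ref{lem:RelaxedHybridPair}, these conditions hold for $(N',L')$ in $\cH'$. Let $\tilde z\in\susp N$ with $\Phi_\cH([0,\infty),\tilde z)\not\subseteq\susp N$, and choose a lift $z'\in N'$ with $\pi(z')=\tilde z$. Since $N'=\pi_0\inv(N)$ and $\pi(N')=\susp N$, we have $\pi\inv(\susp N)\supseteq N'$. Two observations will be used repeatedly. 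First, a point $(g,1)\in G'$ lies in $N'$ iff $g\in N$, and its reset $\iota(r(g))$ lies in $N'$ iff $r(g)\in N$. Second, $\pi(g,1)=\pi(\iota(r(g)))$, so a relaxed trajectory leaves $N'$ exactly when the suspension trajectory could leave $\susp N$, up to this identification.

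By Lemma~\ref{lem:RelaxedOrbit}, $\Phi_\cH(t,\tilde z)=\pi(\psi'_{z'}(t,n))$. Let $\tau=\sup\setdef{t}{\Phi_\cH([0,t],\tilde z)\subseteq\susp N}<\infty$. We follow the relaxed hybrid trajectory $\psi'_{z'}$ up to time $\tau$ and locate the first index $(t,n)$ at which $\psi'_{z'}$ fails to stay in $N'$. Since $\pi(\psi'_{z'}(t,n))\in\susp N$ for $t\le\tau$, and points of the open cylinder $G\times(0,1)$ have singleton fibers, the trajectory must stay in $N'$ through the cylinder portions. The failure therefore happens in one of two ways.

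\emph{Case A: exit at a jump.} Here $\psi'_{z'}(t_n,n-1)=(g,1)\in N'\cap G'$ but $r'(g,1)\notin N'$, i.e.\ $g\in N$ and $r(g)\notin N$. Condition (c) gives $g\in L$, so $(g,1)\in L'$. We must still check that $\Phi_\cH$ stays in $\susp N$ until this time, which it does by the choice of the first failure. Then the exit time $T=t_n$ satisfies $\Phi_\cH(T,\tilde z)=\pi(g,1)\in\susp L$.

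\emph{Case B: exit during a continuous segment.} Starting from $y'=\psi'_{z'}(t_k,k)\in N'\setminus G'$, the segment satisfies $\varphi'([0,\stoptime'(y')),y')\not\subseteq N'$. Condition (d) for $(N',L')$ yields a time at which the segment lies in $L'$. Taking the infimum $T^*$ of such times and using closedness of $L'$, exactly as in the proof of Proposition~\ref{prop:AlternativeHybridIndexPair}, gives a first hitting point in $L'$ before the trajectory leaves $N'$. Projecting by $\pi$ then gives $\Phi_\cH([0,T],\tilde z)\subseteq\susp N$ and $\Phi_\cH(T,\tilde z)\in\susp L$.

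The main obstacle is the bookkeeping between ``leaving $\susp N$'' and ``leaving $N'$''. The quotient can make a relaxed trajectory leave $N'$ at a reset, namely when $r(g)\notin N$, while the suspension point $\pi(g,1)$ is still in $\susp N$. It is precisely this discrepancy that condition (c) handles. Conversely, one must exclude the possibility that the suspension trajectory leaves $\susp N$ while its lift stays in $N'$. This cannot happen because $\pi(N')=\susp N$. Finally, one must ensure that some failure in $N'$ actually occurs, i.e.\ that a lift entirely inside $N'$ would contradict $\Phi_\cH([0,\infty),\tilde z)\not\subseteq\susp N$. A possible Zeno accumulation of jumps should be ruled out here: each jump costs one unit of suspension time, so only finitely many jumps occur before any finite time.
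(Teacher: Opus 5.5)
Your strategy is the paper's: choose a lift $z'\in N'$, use Lemma~\ref{lem:RelaxedOrbit} and $\pi(N')=\susp N$ to see that $\psi'_{z'}$ must leave $N'$, invoke the exit behaviour of $(N',L')$ in $\cH'$, and project by $\pi$. Insisting that the lift lie in $N'$ is correct and in fact necessary: if $\tilde z=\pi(g,1)=\pi(\iota(r(g)))$ with $g\in N$ and $r(g)\notin N$, then the lift $\iota(r(g))$ is not in $N'$, and your Case~A handles exactly this situation. The difference is in how you get the exit property. The paper applies condition (iii) of Definition~\ref{defn:HybridIndexPair} to $(N',L')$ directly. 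That gives at once a hybrid time $(T,k)$ with $\psi'_{z'}(t,n)\in N'$ for all $(t,n)\le(T,k)$ and $\psi'_{z'}(T,k)\in L'$. You instead rebuild this from conditions (c) and (d) of Proposition~\ref{prop:AlternativeHybridIndexPair}.

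That rebuild fails in Case~B. Condition (d) only says that the segment from $y'$ meets $L'$ at \emph{some} time $\le\stoptime'(y')$. It does not say the segment remains in $N'$ until then. Closedness of $L'$ gives $\varphi'(T^*,y')\in L'$ for the first hitting time $T^*$. Nothing, however, yields $\varphi'([0,T^*],y')\subseteq N'$: the segment may exit $N'$ through a point outside $L'$ and re-enter at a point of $L'$.

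This really happens. Take $\dot x=1$ on $X=[0,4]$ with $G=\{4\}$ and $r(4)=0$, and let $N=[0,1]\cup[2,3]$, $L=\{3\}$. This pair satisfies condition (i) of Definition~\ref{defn:HybridIndexPair} (with $S=\emptyset$) and conditions (a)--(d). Yet the trajectory of $0$ leaves $N$ at $1$ without touching $L$, while $T^*=3$. The $(\Leftarrow)$ argument in the proof of Proposition~\ref{prop:AlternativeHybridIndexPair} makes the same leap, so saying ``exactly as in'' that proof does not close the gap.

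The repair is to use the exit property (iii) itself rather than (d). You can take it as delivered by Lemma~\ref{lem:RelaxedHybridPair}, as the paper does. More robustly, you can take it from (iii) for $(N,L)$, which is part of your hypothesis. The path $\pi_0\circ\psi'_{z'}$ is a hybrid trajectory of $\cH$ up to inserting pauses at guard points, and membership in $N'$ and $L'$ is decided by $\pi_0$, so (iii) transfers directly. With either route, the $\tau$-bookkeeping and the case split are unnecessary. All you need is your correct observation that the lift must eventually leave $N'$.
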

\begin{proof}
  Let $\tilde{z} \in \susp N$ and suppose $\Phi_\cH([0,\infty), \tilde{z}) \not\subseteq \susp N$. We must show there exists $T \geq 0$ such that $\Phi_\cH(T, \tilde{z}) \in \susp L$ and $\Phi_\cH([0,T], \tilde{z}) \subseteq \susp N$.

  Choose any $z' \in \pi\inv(\tilde{z})$. Since $\tilde{z} \in \susp N = \pi(N')$, we have $z' \in N'$. By Lemma~\ref{lem:RelaxedOrbit}, $\pi(\psi'_{z'}(t,n)) = \Phi_\cH(t, \tilde{z})$ for all $(t,n) \in \dom(\psi'_{z'})$. Since $\Phi_\cH([0,\infty), \tilde{z}) \not\subseteq \susp N$, there exists $(t_0,n_0) \in \dom(\psi'_{z'})$ with $\pi(\psi'_{z'}(t_0,n_0)) \notin \susp N = \pi(N')$. This implies $\psi'_{z'}(t_0,n_0) \notin N'$, hence $\psi'([0,\infty), z') \not\subseteq N'$.

  By Lemma~\ref{lem:RelaxedHybridPair}, $(N',L')$ is a hybrid index pair for $S'$ in $\cH'$. The exit set property (condition (iii) in Definition~\ref{defn:HybridIndexPair}) guarantees the existence of $(T,k) \in \dom(\psi'_{z'})$ such that $\psi'_{z'}(T,k) \in L'$ and $\psi'_{z'}(t,n) \in N'$ for all $(t,n) \leq (T,k)$ in $\dom(\psi'_{z'})$.

  Applying $\pi$ and Lemma~\ref{lem:RelaxedOrbit}, we obtain $\Phi_\cH(T, \tilde{z}) = \pi(\psi'_{z'}(T,k)) \in \pi(L') = \susp L$. For any $t \in [0,T]$, there exists a unique $n$ with $t \in I_n = [t_n, t_{n+1}]$. Since $I_0, I_1, \ldots, I_k$ partition $[0,T]$, we have $n \leq k$, and therefore $(t,n) \leq (T,k)$. By the exit set property, $\psi'_{z'}(t,n) \in N'$, so $\Phi_\cH(t, \tilde{z}) = \pi(\psi'_{z'}(t,n)) \in \pi(N') = \susp N$. Thus $\Phi_\cH([0,T], \tilde{z}) \subseteq \susp N$.
\end{proof}

\begin{thm}
  \label{thm:SuspensionIndexPair}
  If $(N,L)$ is a hybrid index pair for $S$ under $\cH$, then $(\susp N, \susp L)$ is an index pair for $\susp S$ under $\Phi_\cH$.
\end{thm}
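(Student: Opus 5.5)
We verify the three conditions of Definition~\ref{defn:IndexPair} for $(\susp N,\susp L)$ one at a time. Conditions (ii) and (iii) are already done by Lemma~\ref{lem:PositiveInvarianceSuspension} and Lemma~\ref{lem:ExitSetSuspension}. So the substance is condition (i) together with the compactness and inclusion requirements.

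We start with the preliminaries. Since $N' = \pi_0\inv(N)$ and $L' = \pi_0\inv(L)$ are compact (as noted in Lemma~\ref{lem:RelaxedHybridPair}) and $\pi$ is continuous, the sets $\susp N = \pi(N')$ and $\susp L = \pi(L')$ are compact. Also $L \subset N$ gives $\susp L \subset \susp N$.

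Next we show $\susp S \subset \Int(\susp N \setminus \susp L)$, using Lemma~\ref{lem:SaturatedOpenSet}. It supplies a saturated open $W' \supset S'$ with $W' \subset N' \setminus L'$. Hence $\pi(W')$ is open, contains $\pi(S') = \susp S$, lies in $\pi(N') = \susp N$, and is disjoint from $\susp L$. So $\pi(W') \subset \Int(\susp N\setminus \susp L)$.

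The main step is the equality $\susp S = \Inv(\cl(\susp N\setminus\susp L),\Phi_\cH)$.
\begin{itemize}
\item The inclusion $\subseteq$ follows from Proposition~\ref{prop:HybridInvariantSet}, which says $\susp S$ is invariant, together with the containment just shown.
\item For $\supseteq$, let $\tilde T = \Inv(\cl(\susp N\setminus\susp L),\Phi_\cH)$. By Proposition~\ref{prop:SemiflowInvariantSet}, $T := \incl\inv(\tilde T)$ is $\cH$-invariant and $\susp T = \tilde T$. It then suffices to prove $T \subset \cl(N\setminus L)$, since the hybrid index pair condition (i) would then give $T \subset S$, hence $\tilde T = \susp T \subset \susp S$.
\end{itemize}

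To get $T \subset \cl(N\setminus L)$, we first note the easy half: $x \in T$ gives $\incl(x) \in \cl(\susp N\setminus \susp L) \subset \susp N$. By Lemma~\ref{lem:InclInvSusp} this yields $x \in N \cup r(N\cap G)$, and invariance of $T$ should let us place $x$ in $N$.

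The real obstacle is the closure. A point of $\cl(\susp N \setminus \susp L)$ pulled back to $X$ need not lie in $\cl(N\setminus L)$, because neighborhoods in $\susp X$ of points of $\incl(r(G))$ also include cylinder points near $G\times\{1\}$.

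I would handle this dynamically rather than pointwise. Every point $\tilde y$ of $\tilde T$ has its whole trajectory in $\susp N$. By the positive invariance of Lemma~\ref{lem:PositiveInvarianceSuspension}, if $\tilde y \in \susp L$ then its full forward orbit stays in $\susp L$. That orbit then lies in $\susp L \cap \cl(\susp N\setminus\susp L)$, i.e. on the boundary. Its $\omega$-limit set would be a nonempty compact invariant subset of $\susp L \cap \tilde T$.

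I would then argue that invariant pieces in $\susp L$ are excluded, as follows. Lift the full orbit via Lemma~\ref{lem:RelaxedOrbit} and project it with $\pi_0$. This produces a full hybrid trajectory in $N$ whose points lie either in $N\setminus L$ or in $L$ approached from $N\setminus L$, so they lie in $\cl(N\setminus L)$ at the level of $X$.

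If this direct route is delicate, my fallback is to check membership by cases, mirroring Cases 1--3 of Theorem~\ref{thm:IsolatedSet}: $\incl(x)$ with $x \notin r(G)$, $x \in r(G)$, and interior cylinder points. In each case I would show that a sequence in $\susp N\setminus\susp L$ converging to $\incl(x)$ either projects to a sequence in $N\setminus L$ converging to $x$, or comes from $(g_k,s_k)$ with $s_k \to 1$, $g_k \in N\setminus L$ and $r(g_k)\to x$. In the latter case, invariance of $T$ and the hybrid exit/positive-invariance conditions (a), (c) of Proposition~\ref{prop:AlternativeHybridIndexPair} force $x \in \cl(N\setminus L)$.

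Once $T\subset\cl(N\setminus L)$ is established, all of condition (i) holds, and combining it with the two lemmas completes the argument.
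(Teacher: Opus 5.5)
Your reduction is the paper's: set $\tilde T=\Inv(\cl(\susp N\setminus\susp L),\Phi_\cH)$, pass to the $\cH$-invariant set $T=\incl\inv(\tilde T)$, and aim for $T\subseteq\cl(N\setminus L)$. You also identify the right obstacle. Besides points of $\cl(N\setminus L)$, the set $\incl\inv(\cl(\susp N\setminus\susp L))$ contains every $r(g)$ with $g\in\cl((N\setminus L)\cap G)$, approached through cylinder points $\pi(g,s)$ with $s\to 1$. Neither of your routes gets past this obstacle.
\begin{itemize}
\item The dynamical route ends by asserting that points of $L$ ``approached from $N\setminus L$'' lie in $\cl(N\setminus L)$ at the level of $X$. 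That is exactly the inference you had just said fails. The claim that invariant pieces of $\susp L$ cannot lie in $\cl(\susp N\setminus\susp L)$ is not argued.
\item The fallback asserts that invariance plus (a) and (c) of Proposition~\ref{prop:AlternativeHybridIndexPair} force $x\in\cl(N\setminus L)$. But (a) only concerns jumps out of $L$, and (c) only guarantees that a point of $(N\setminus L)\cap G$ jumps into $N$. Nothing stops it from landing in $L\setminus\cl(N\setminus L)$.
\end{itemize}

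In fact the missing step is false, so it cannot be filled. Take $X=[0,5]$, $\dot x=\min(1,5-x)$, $G=[3,5]$ and $r(g)=\min(2g-5,3)$; the TGC holds with $U=(5/2,5]$. Let $N=[1,4]$ and $L=[1,3]$.
\begin{itemize}
\item Every trajectory starting in $\cl(N\setminus L)=[3,4]$ is in $[1,3)$ after at most two jumps, so $\Inv(\cl(N\setminus L),\cH)=\emptyset$.
\item $L$ is forward invariant and no trajectory ever leaves $N$, so $(N,L)$ is a hybrid index pair for $S=\emptyset$.
\item Yet $T=[1,3]$ is a hybrid periodic orbit: flow from $1$ to $3$, then $r(3)=1$. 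Its suspension $\incl([1,3])\cup\pi(\{3\}\times[0,1])$ is a $\Phi_\cH$-periodic orbit lying in $\susp L$.
\item This orbit is contained in $\cl(\susp N\setminus\susp L)$. Indeed $\pi(g,s)\in\susp N\setminus\susp L$ for $g\in(3,4]$ and $s\in(0,1)$, and $\pi(g,s)\to\incl(2g-5)$ as $s\to 1$.
\end{itemize}
Hence $T\not\subseteq\cl(N\setminus L)$, and $(\susp N,\susp L)$ is not an index pair for $\susp S$. A repelling equilibrium can be added inside $N\setminus L$ to get the same failure with $S\neq\emptyset$.

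The paper's proof crosses this point via the assertion $\incl(\Int L)\subset\Int(\susp L)$. That fails in this example: $2\in\Int L$, but $\pi(7/2,s)\to\incl(2)$ with $\pi(7/2,s)\notin\susp L$. So the paper does not supply the missing idea either, and the statement needs an extra hypothesis. A natural one is $r((N\setminus L)\cap G)\subseteq\cl(N\setminus L)$. Under it, a compactness argument in $X'$ works: a limit of lifts in $N'\setminus L'$ lying over $\incl(x)$ is either $\iota(x)$ with $x\in\cl(N\setminus L)$, or $(g,1)$ with $g\in\cl((N\setminus L)\cap G)$ and $r(g)=x$. This gives $\incl\inv(\cl(\susp N\setminus\susp L))\subseteq\cl(N\setminus L)$ directly, and your reduction then finishes without any dynamics.
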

\begin{proof}
  We show that $(\susp N, \susp L)$ satisfies the three conditions of Definition~\ref{defn:IndexPair}. Since $X'$ is compact, $\pi$ and $\pi_0$ are continuous, $\susp N$ and $\susp L$ are compact. Since $L \subset N$, we have $\susp L \subset \susp N$.

  We first show that $\susp S \subset \Int(\susp N \setminus \susp L)$. By Lemma~\ref{lem:SaturatedOpenSet}, there exists an open set $W' \subset X'$ such that $S' \subset W' \subset N' \setminus L'$ and $\pi(W') \cap \susp L = \emptyset$. Let $\tilde{W} = \pi(W')$. Since $W'$ is $\pi$-saturated, $\tilde{W}$ is open in $\susp X$. Since $S \subset N$, we have $\susp S \subset \susp N$. Thus $\susp S \subset \tilde{W} \cap \susp N \subseteq \susp N \setminus \susp L$. Since $\tilde{W} \cap \susp N$ is open in $\susp N$, we have $\susp S \subset \Int(\susp N \setminus \susp L)$.

  Now, we show that $\susp S = \Inv(\cl(\susp N \setminus \susp L), \Phi_\cH)$. Since $\susp S$ is invariant by Proposition~\ref{prop:HybridInvariantSet} and contained in $\Int(\cl(\susp N \setminus \susp L))$, we have $\susp S \subseteq \Inv(\cl(\susp N \setminus \susp L), \Phi_\cH)$. Conversely, let $\tilde{A} = \Inv(\cl(\susp N \setminus \susp L),\Phi_\cH)$. By Proposition~\ref{prop:SemiflowInvariantSet}, $A = \incl\inv(\tilde{A})$ is an invariant set in $\cH$. If we show that $A \subseteq S$, then $\tilde{A} = \susp A \subseteq \susp S$ concludes the proof of condition (i).

  First, note that $\tilde{A} \subseteq \susp N$ implies $A \subseteq N$. Moreover, we have that $\cl(\susp N \setminus \susp L) \cap \Int(\susp L) = \emptyset$. Since $\iota(\Int(L)) \subset \Int(\susp L)$, it follows that $\iota(A) \cap \iota(\Int(L)) = \emptyset$, and therefore $A \cap \Int(L) = \emptyset$.
  Since $A \subseteq N$ and $A \cap \Int(L) = \emptyset$, we have $A \subseteq \cl(N \setminus L)$.
  Since $S = \Inv(\cl(N \setminus L), \cH)$ is the maximal invariant set in $\cl(N \setminus L)$, we have that $A \subseteq S$, and thus $\tilde{A} = \susp A \subseteq \susp S$.

  Since conditions (ii) and (iii) follow from Lemma~\ref{lem:PositiveInvarianceSuspension} and Lemma~\ref{lem:ExitSetSuspension}, the proof is complete.
\end{proof}

Theorem~\ref{thm:SuspensionIndexPair} allows us to compute the suspension Conley index using familiar techniques from the hybrid domain, without explicitly constructing the quotient space.

\subsection{Continuation and Robustness}

One of the properties of the Conley index is its stability under perturbations. While the notion of a neighborhood of dynamical systems is well-established \cite{Alex2023}, it is lacking in the context of hybrid systems. Even in the suspension semiflow setting, it is easy to verify that small changes in $r$ might significantly alter the topological structure of $\susp X$. In what follows, we describe a concept of robustness for the suspension Conley index under continuous deformations of the local semiflow, while keeping the rest of hybrid structure (state space, guard set, and reset map) fixed. Throughout, let $\Lambda=[-1,1]$.

\begin{defn}
  \label{defn:ContinuousFamily}
  A \emph{continuous family of hybrid dynamical systems} $\setof{\cH_\lambda}_{\lambda \in \Lambda}$ consists of hybrid systems $\cH_\lambda=(X,\varphi_\lambda,G,r_\lambda)$ where each $\varphi_\lambda : \cU_\lambda \subseteq \Rp \times X \to X$ is a local semiflow with domain $\cU_\lambda$, such that
  \begin{align*}
    \Phi : \cD &\to X, \quad \Phi(t,x,\lambda) = \varphi_\lambda(t,x), \\
    R : G \times \Lambda &\to X, \quad R(g,\lambda) = r_\lambda(g),
  \end{align*}
  are continuous, with $\cD = \setdef{(t,x,\lambda) \in \Rp \times X \times \Lambda}{(t,x) \in \cU_\lambda}$.
\end{defn}
\begin{defn}
  \label{defn:UniformTGC}
  A continuous family of HDS $\{\cH_\lambda\}_{\lambda \in \Lambda}$ satisfies the \emph{Uniform Trapping Guard Condition} (UTGC) if there exists an open neighborhood $U \subseteq X$ of $G$ such that:
  \begin{enumerate}[(i)]
    \item The map $\Lambda \times U \to \Rp$ given by $(\lambda, x) \mapsto \stoptime_\lambda(x)$ is continuous.
    \item The map $H: \Lambda \times U \times [0,1] \to U$ given by $H(\lambda, x, t) = \varphi_\lambda(t \cdot \stoptime_\lambda(x), x)$ is continuous on $\lambda$ and a strong deformation retract from $U$ to $G$ for each $\lambda$.
  \end{enumerate}
\end{defn}
\begin{defn}
  \label{defn:FlowFamily}
  A continuous family $\{\cH_\lambda\}_{\lambda \in \Lambda}$ is \emph{flow-dependent} if the reset map $r$ is independent of $\lambda$, that is, $\cH_\lambda = (X, \varphi_\lambda, G, r)$ for all $\lambda \in \Lambda$.
\end{defn}
\begin{rem}
  For flow-dependent families, the suspension space
  \[
    \Sigma_{\cH_\lambda} X = \frac{X \sqcup (G \times [0,1])}{(g,0) \sim g,\, (g,1) \sim r(g)}
  \]
  is independent of $\lambda$, so we write $\susp X$ for the suspension space. The suspension semiflow $\Phi_{\cH_\lambda}$ varies continuously with $\lambda$ on the fixed space $\susp X$, allowing classical continuation theorems for isolated invariant sets to apply directly.
\end{rem}
\begin{prop}
  \label{prop:FlowDependentContinuity}
  If $\{\cH_\lambda\}_{\lambda \in \Lambda}$ is a flow-dependent family satisfying the UTGC, then $\{\Phi_{\cH_\lambda}\}_{\lambda \in \Lambda}$ is a continuous family of semiflows on $\susp X$. That is, the map $(t, z, \lambda) \mapsto \Phi_{\cH_\lambda}(t,z)$ is continuous on $\Rp \times \susp X \times \Lambda$.
\end{prop}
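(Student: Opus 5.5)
The plan is to lift the family to the relaxed space and push down through the quotient. I would define the combined relaxed flow $\Phi'(t,z',\lambda)=\varphi'_\lambda(t,z')$ on its natural domain in $\Rp\times X'\times\Lambda$, and first show that it is continuous there. Then I would obtain $(t,\tilde z,\lambda)\mapsto\Phi_{\cH_\lambda}(t,\tilde z)$ by gluing finitely many such segments across the identification $(g,1)\sim\iota(r(g))$. The quotient step uses that $\pi\times\mathrm{id}_\Lambda:X'\times\Lambda\to\susp X\times\Lambda$ is a quotient map, since $\Lambda$ is compact Hausdorff and hence locally compact. Because $r$ does not depend on $\lambda$, the identification and the space $\susp X$ are the same for every $\lambda$.

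\textbf{Step 1: continuity of $\Phi'$.} On the cylinder part $G\times(0,1]$, $\varphi'_\lambda(t,(g,s))=(g,s+t)$ is independent of $\lambda$ and clearly continuous. On $\iota(X)$, the definition splits by comparing $t$ with $\stoptime_\lambda(x)$: it is $\iota(\varphi_\lambda(t,x))$ for $t<\stoptime_\lambda(x)$, and $(\varphi_\lambda(\stoptime_\lambda(x),x),\,t-\stoptime_\lambda(x))$ afterwards. Away from the guard neighborhood $U$, no impact happens within small times, so continuity comes directly from the continuity of $\Phi$ in Definition~\ref{defn:ContinuousFamily}. Inside $U$, UTGC(i) makes $(\lambda,x)\mapsto\stoptime_\lambda(x)$ continuous, and UTGC(ii) makes the impact point $H(\lambda,x,1)=\varphi_\lambda(\stoptime_\lambda(x),x)$ continuous in $(\lambda,x)$. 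Both branches are therefore continuous, and they agree on the switching locus $t=\stoptime_\lambda(x)$, where both equal $(\varphi_\lambda(\stoptime_\lambda(x),x),0)\sim\iota(\cdot)$. The pasting lemma on the two closed pieces $\{t\le\stoptime_\lambda(x)\}$ and $\{t\ge\stoptime_\lambda(x)\}$, which are closed because $\stoptime$ is continuous, then gives continuity.

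\textbf{Step 2: passing to the quotient and concatenating.} For a fixed bounded time horizon, the composite $\pi\circ\Phi'$ agrees with $\Phi_{\cH_\lambda}$ up to the first arrival at $G'$. After arrival at $(g,1)$, the trajectory continues from $\iota(r(g))$, which is continuous in $g$ because $r$ is fixed. Near a point $(t_0,\tilde z_0,\lambda_0)$, I would use compactness and a positive lower bound on the cylinder transit time (equal to $1$) to see that only finitely many resets occur in $[0,t_0+1]$. I would then show continuity by induction on the number of resets, using the semigroup property to write $\Phi_{\cH_\lambda}(t,\cdot)$ as a composition of continuous pieces. Well-definedness on $\susp X$ holds because both representatives $(g,1)$ and $\iota(r(g))$ yield the same future. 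Continuity at points of $\susp X\times\Lambda$ then follows from the quotient property of $\pi\times\mathrm{id}_\Lambda$.

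\textbf{Main obstacle.} The delicate point is continuity near points $x\notin U$ whose trajectory later enters $U$, and at the boundary of $U$, where $\stoptime_\lambda$ is only controlled inside $U$. I would handle this as follows. Flow from $x$ by $\varphi_{\lambda}$ until the orbit is inside $U$ but not yet at the guard; this takes a positive time, and that time persists for nearby $(x,\lambda)$ by continuity of $\Phi$ and openness of $U$. Then write the flow as the semigroup composition of this initial segment with the $U$-local description from Step 1. A secondary subtlety is a neighbourhood of a point $\iota(r(g))$ with $r(g)\in G$ (an immediate second reset). This is harmless because the cylinder always costs time $1$, so each further reset just adds another continuous transit step.
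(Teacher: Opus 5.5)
Your argument is sound, but it takes a different route from the paper. The paper does no local analysis. It absorbs the parameter into the phase space by considering the single hybrid system $(X\times\Lambda,\ (t,(x,\lambda))\mapsto(\varphi_\lambda(t,x),\lambda),\ G\times\Lambda,\ (g,\lambda)\mapsto(r(g),\lambda))$. It notes that the UTGC gives the TGC for this system with neighborhood $U\times\Lambda$, and applies Theorem~\ref{thm:hybridsemiflow} to obtain a continuous suspension semiflow on $\susp(X\times\Lambda)$. Finally, since $r$ is independent of $\lambda$, it identifies $\susp(X\times\Lambda)$ with $\susp X\times\Lambda$, under which that semiflow becomes $(t,z,\lambda)\mapsto(\Phi_{\cH_\lambda}(t,z),\lambda)$.

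You instead re-prove the relevant direction of that theorem with the parameter carried along. You paste across $t=\stoptime_\lambda(x)$ on $U$, propagate continuity of the impact time from $U$ to all of $X\times\Lambda$ by flowing into $U$, bound the number of resets using the unit cylinder transit time, and descend through the quotient map $\mathrm{id}\times\pi\times\mathrm{id}_\Lambda$. The paper's reduction is far shorter and isolates the one place where flow-dependence matters: the identification of the suspension spaces. That identification rests on the same fact you use, namely that $\pi\times\mathrm{id}_\Lambda$ is a quotient map. Your version is self-contained and shows exactly where each hypothesis enters, at the cost of more bookkeeping.

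One detail should be made explicit in your ``main obstacle'' step. Continuity of $\Phi$ and openness of $U$ only give $\varphi_{\lambda'}(t_1,x')\in U$ for nearby $(x',\lambda')$. To conclude $\stoptime_{\lambda'}(x')=t_1+\stoptime_{\lambda'}(\varphi_{\lambda'}(t_1,x'))$, you also need nearby orbits to miss $G$ on $[0,t_1]$. This follows because $\varphi_\lambda([0,t_1],x)$ is compact and disjoint from the closed set $G$, combined with a tube-lemma argument.
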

\begin{proof}
  Consider the hybrid system $\cH$ on the product space $X \times \Lambda$ defined by
  \[
    \cH = (X \times \Lambda, \Phi, G \times \Lambda, R),
  \]
  where $\Phi(t, (x,\lambda)) = (\varphi_\lambda(t,x), \lambda)$ and $R(g,\lambda) = (r(g), \lambda)$.
  Since the family $\setof{\cH_\lambda}$ satisfies the UTGC, $\cH$ satisfies the Trapping Guard Condition on $X \times \Lambda$.
  By Theorem~\ref{thm:hybridsemiflow}, the suspension semiflow $\Phi_{\cH}$ is well-defined and continuous on $\susp(X \times \Lambda)$.

  Since the family is flow-dependent, that is, $r$ is fixed, there is a homeomorphism $h: \susp(X \times \Lambda) \to (\susp X) \times \Lambda$.
  The semiflow $\Phi_\cH$ on $\susp(X \times \Lambda)$ is conjugate via $h$ to the map $((z, \lambda), t) \mapsto (\Phi_{\cH_\lambda}(t,z), \lambda)$.
  Therefore, continuity of $\Phi_\cH$ implies the continuity of this map with respect to $(t, z, \lambda)$.
  Thus, $\setof{\Phi_{\cH_\lambda}}_{\lambda \in \Lambda}$ is a continuous family of semiflows on $\susp X$.
\end{proof}
\begin{thm}
  \label{thm:Robustness}
  Let $\setof{\cH_\lambda}_{\lambda \in \Lambda}$ be a flow-dependent family satisfying the UTGC.
  If $N$ is an isolating neighborhood in $\cH_0$, then there exists $\varepsilon > 0$ such that $N$ is an isolating neighborhood for $\cH_\lambda$ for all $\lambda \in (-\varepsilon, \varepsilon)$.
\end{thm}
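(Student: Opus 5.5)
The plan is to transfer the question to the suspension, apply the classical robustness of isolating neighborhoods for continuous families of semiflows on a compact metric space, and pull the conclusion back via Theorem~\ref{thm:IsolatedSet}.

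\emph{Step 1: move to the suspension.} Let $S_0=\Inv(N,\cH_0)\subset\Int N$. By the forward direction in the proof of Theorem~\ref{thm:IsolatedSet}, there is a compact neighborhood $K_1$ of $S_0$ with $K_1\subset\Int N$ and $r(K_1\cap G)\subset N$. For this $K_1$, the set $\tilde N=\susp K_1$ is an isolating neighborhood of $\susp S_0$ under $\Phi_{\cH_0}$. Because the family is flow-dependent, $r$ and $G$ do not depend on $\lambda$, so $\susp X$, $\susp K_1$ and $\susp N$ are the same sets for every $\lambda$.

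\emph{Step 2: classical continuation.} By Proposition~\ref{prop:FlowDependentContinuity}, $(t,z,\lambda)\mapsto\Phi_{\cH_\lambda}(t,z)$ is continuous on $\Rp\times\susp X\times\Lambda$. The standard argument then shows that isolation of a compact set persists for nearby parameters. Suppose no $\varepsilon$ works. Then there are $\lambda_k\to0$ and points $z_k\in\partial\tilde N$ lying on full $\Phi_{\cH_{\lambda_k}}$-trajectories inside $\tilde N$. Extract a convergent subsequence $z_k\to z$. On each compact time interval the trajectories converge uniformly, using joint continuity and a diagonal argument; backward trajectories are handled by choosing preimages and using compactness. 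The limit is a full $\Phi_{\cH_0}$-trajectory through $z\in\partial\tilde N$ contained in $\tilde N$. This contradicts $\Inv(\tilde N,\Phi_{\cH_0})\subset\Int\tilde N$. Hence there is $\varepsilon>0$ such that, for $|\lambda|<\varepsilon$, $\tilde S_\lambda:=\Inv(\tilde N,\Phi_{\cH_\lambda})\subset\Int\tilde N$.

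\emph{Step 3: pull back to $\cH_\lambda$.} Let $S_\lambda=\Inv(N,\cH_\lambda)$. We must show $S_\lambda\subset\Int N$.

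This is where I expect the main obstacle. $N$ need not satisfy $r(N\cap G)\subset N$, so invariant sets of $\cH_\lambda$ in $N$ are not automatically inside $K_1$, and $\susp N$ need not relate cleanly to $\tilde N$.

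The approach I would try first is to run Step 2 directly with $\susp N$ in place of $\tilde N$. For that I must check that $\susp N$ isolates under $\Phi_{\cH_0}$. Its maximal invariant set is $\susp A$ with $A=\incl\inv(\cdot)$ invariant in $\incl\inv(\susp N)=N\cup r(N\cap G)$. A full hybrid trajectory in $\susp N$ visits $\incl(X)$ only at points of $N$, except possibly post-reset points $r(g)$ with $g\in N\cap G$. Such a trajectory projects to a hybrid trajectory lying in $N$ at all continuous times. I would argue that the hybrid notion of $\Inv(N,\cH)$ should include exactly these points, so that $\Inv(\susp N,\Phi_{\cH_0})=\susp S_0$. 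The interior condition would then follow by the three-case neighborhood construction of Theorem~\ref{thm:IsolatedSet}, using $S_0\subset\Int N$ and $r(S_0\cap G)\subset S_0$.

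With $\susp N$ isolating at $\lambda=0$, Step 2 gives $\Inv(\susp N,\Phi_{\cH_\lambda})\subset\Int\susp N$ for small $|\lambda|$. Since $S_\lambda$ is $\cH_\lambda$-invariant, Proposition~\ref{prop:HybridInvariantSet} shows $\susp S_\lambda$ is $\Phi_{\cH_\lambda}$-invariant inside $\susp N$. Therefore $\incl(S_\lambda)\subset\Int\susp N$, and because $\incl$ is continuous, $S_\lambda\subset\incl\inv(\Int\susp N)$.

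One detail remains: points of $\partial N$ that are reset images $r(g)$ with $g$ interior could lie in $\incl\inv(\Int\susp N)$ without lying in $\Int N$. To exclude them, I would intersect with the analogous argument applied to the smaller neighborhood $K_1$ from Step 1. This shows the invariant sets are trapped near $S_0$, hence inside $K_1\subset\Int N$, for $\lambda$ small. Concretely, I would apply Step 2 to $\tilde N$ together with upper semicontinuity: $\tilde S_\lambda$ lies in any prescribed neighborhood of $\susp S_0$ for small $\lambda$. I would also show that any invariant set of $\Phi_{\cH_\lambda}$ in $\susp N$ lies in $\tilde N$, by the same limiting argument applied to $\susp N\setminus\Int\tilde N$.
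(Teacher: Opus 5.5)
Your Steps 1 and 2 are fine. Step 3, however, rests on the identification $\Inv(\susp N,\Phi_{\cH_0})=\susp S_0$, and this fails in general.

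By Lemma~\ref{lem:InclInvSusp}, $\incl\inv(\susp N)=N\cup r(N\cap G)$. So a full $\Phi_{\cH_0}$-trajectory in $\susp N$ may climb the cylinder over some $g\in N\cap G$, arrive at $\pi(g,1)=\incl(r(g))$ with $r(g)\notin N$, and keep flowing inside $\incl(r(N\cap G))$. Such a trajectory is not the suspension of a hybrid trajectory in $N$: under Definition~\ref{defn:HybridInvariantSet} the post-jump point $r(g)$ belongs to $\psi([0,\infty),x)$. Arguing that $\Inv(N,\cH)$ ``should include these points'' therefore amounts to changing the definition.

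Here is a concrete counterexample. Take the constant family on $X=[0,1]\times[-1,1]$ with $\varphi(t,(x,y))=(x+t,y)$, $G=\setof{1}\times[-1,1]$ and $r(1,y)=(|y|/2,0)$; the TGC holds as in Section~\ref{subsec:CubicMap}. Let $N=X\setminus\big((\tfrac18,\tfrac14)\times(-\tfrac18,\tfrac18)\big)$.
\begin{itemize}
\item Every forward trajectory reaches the periodic orbit $P=[0,1]\times\setof{0}$ and then crosses the removed box. Hence $S_0=\Inv(N,\cH_0)=\emptyset$, and $N$ is isolating.
\item For $x\in(\tfrac18,\tfrac14)$ we have $(x,0)=r(1,2x)$ with $(1,2x)\in N\cap G$. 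So $\susp P\subset\susp N$, and $\Inv(\susp N,\Phi_{\cH_0})\supseteq\susp P\neq\susp S_0$.
\item In fact $\susp N$ does not isolate at all, since $\incl(x,\eta)\notin\susp N$ for small $\eta\neq0$.
\item With the admissible choice $K_1=\emptyset$, $\susp P$ is a $\Phi_{\cH_\lambda}$-invariant set in $\susp N$ outside $\tilde N$ for every $\lambda$.
\end{itemize}
So your fallback claim fails too. The limiting argument on $\susp N\setminus\Int\tilde N$ only produces a point of $\Inv(\susp N,\Phi_{\cH_0})\setminus\Int\tilde N$, and that set need not be empty.

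There is also a smaller issue. Since $\incl\inv(\susp K_1)=K_1\cup r(K_1\cap G)$, landing in $\tilde N$ gives $S_\lambda\subset\Int N$ only if $r(K_1\cap G)\subset\Int N$, not merely $\subset N$ as you state in Step 1.

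The missing idea is to suspend only the admissible jumps. Let $G_N=\setdef{g\in N\cap G}{r(g)\in N}$ and $N^\sharp=\incl(N)\cup\pi(G_N\times[0,1])$. Then:
\begin{itemize}
\item $N^\sharp$ is compact.
\item $\incl\inv(N^\sharp)=N$, because $\pi(G_N\times\setof{1})=\incl(r(G_N))\subset\incl(N)$.
\item Every suspended full hybrid trajectory of $\cH_\lambda$ in $N$ lies in $N^\sharp$.
\item By Proposition~\ref{prop:SemiflowInvariantSet}, $\incl\inv(\Inv(N^\sharp,\Phi_{\cH_\lambda}))$ is an $\cH_\lambda$-invariant subset of $N$.
\end{itemize}
Hence $\Inv(N^\sharp,\Phi_{\cH_\lambda})=\susp\Inv(N,\cH_\lambda)$ for every $\lambda$.

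The direct contradiction argument then needs neither $K_1$ nor $\tilde N$. Suppose $x_k\in\partial N\cap\Inv(N,\cH_{\lambda_k})$ with $\lambda_k\to0$. Compactness of $\susp X$, joint continuity (Proposition~\ref{prop:FlowDependentContinuity}) and a diagonal argument over the times $-m$ give a full $\Phi_{\cH_0}$-trajectory in the closed set $N^\sharp$ through $\incl(x^*)$, with $x^*\in\partial N$. Therefore $x^*\in S_0\subset\Int N$, a contradiction.

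This is essentially the paper's route, which is your ``run Step 2 with $\susp N$'' variant applied pointwise on $\partial N$. The paper also works with $\susp N$, and its final appeal to Proposition~\ref{prop:SemiflowInvariantSet} silently needs the same identification. Replacing $\susp N$ by $N^\sharp$ is what makes that step correct as well.
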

\begin{proof}
  We prove it by contradiction. Suppose that for each $k \in \N$, there exist $\lambda_k \in \Lambda$ with $|\lambda_k| < 1/k$ and $x_k \in \partial N \cap \Inv(N, \cH_{\lambda_k})$. Since $x_k \in \Inv(N, \cH_{\lambda_k})$, let $\gamma_k : \R \to \susp N$ be the suspended full trajectory on $\Phi_{\cH_{\lambda_k}}$ with $\gamma_k(0) = \incl(x_k)$. Note that $z_k$ is in $\tilde{K} = \incl(\partial N) \cup \pi((\partial N \cap G) \times [0,1])$. Since $\partial N$ and $G$ are closed, $\tilde{K}$ is closed in $\susp X$. Passing to a subsequence, $z_k \to z^* \in \tilde{K}$.

  We show $z^* \in \Inv(\susp N, \Phi_{\cH_0})$. Fix $t \geq 0$. Since $\gamma_k(t) \in \susp N$ for all $k$, and by the continuity of $\Phi$,
  \[
    \Phi_{\cH_0}(t, z^*) = \lim_{k \to \infty} \Phi_{\cH_{\lambda_k}}(t, z_k) = \lim_{k \to \infty} \gamma_k(t) \in \susp N.
  \]
  For backward invariance, fix $t > 0$. Similarly, $w_k = \gamma_k(-t) \in \susp N$ admit a convergent subsequence $w_{k_j} \to w^* \in \susp N$ by compactness. Continuity yields $\Phi_{\cH_0}(t, w^*)
  = z^*$. Thus, there exists a backward orbit for $z^*$. Combined with forward invariance, $z^* \in \Inv(\susp N, \Phi_{\cH_0})$.

  Back to the original space $X$, if $z^* = \incl(x^*)$ for some $x^* \in \partial N$, then Proposition~\ref{prop:SemiflowInvariantSet} implies $x^* \in \Inv(N, \cH_0)$. If $z^* = \pi(g^*, s)$ for some $g^* \in \partial N \cap G$ and $s \in (0,1)$, then Lemma~\ref{lem:GuardInvariant} implies $\incl(g^*) \in \Inv(\susp N, \Phi_{\cH_0})$, and Proposition~\ref{prop:SemiflowInvariantSet} gives $g^* \in \Inv(N, \cH_0)$. In either case, $\Inv(N, \cH_0) \cap \partial N \neq \emptyset$, contradicting that $N$ isolates $S_0$.
\end{proof}
\begin{defn}
  \label{defn:Continuation}
  Let $\setof{\cH_\lambda}_{\lambda \in \Lambda}$ be a flow-dependent family satisfying UTGC. For a compact set $N \subset X$ and $S_\lambda = \Inv(N,\cH_\lambda)$, we say $S_{\lambda_0}$ and $S_{\lambda_1}$ are \emph{related by continuation} if $N$ is an isolating neighborhood for $\cH_\lambda$ for all $\lambda \in [\lambda_0, \lambda_1]$.
\end{defn}

\begin{thm}
  \label{thm:Continuation}
  Let $\setof{\cH_\lambda}_{\lambda \in \Lambda}$ be a flow-dependent continuous family satisfying UTGC. If $S_{\lambda_0}$ and $S_{\lambda_1}$ are related by continuation, then $\SCH_*(S_{\lambda_0}) \cong \SCH_*(S_{\lambda_1})$.
\end{thm}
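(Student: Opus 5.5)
The plan is to reduce the statement to the classical continuation theorem for continuous families of semiflows on a fixed compact metric space. For a flow-dependent family satisfying UTGC, the suspension space $\susp X$ does not depend on $\lambda$. By Proposition~\ref{prop:FlowDependentContinuity}, the map $(t,z,\lambda)\mapsto\Phi_{\cH_\lambda}(t,z)$ is continuous on $\Rp\times\susp X\times\Lambda$. So it suffices to produce a single compact set $\tilde N\subset\susp X$ that is an isolating neighborhood for $\Phi_{\cH_\lambda}$ for every $\lambda\in[\lambda_0,\lambda_1]$, with $\Inv(\tilde N,\Phi_{\cH_\lambda})=\susp S_\lambda$. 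The classical continuation result \cite{Conley1978,Salamon1985} then gives $CH_*(\susp S_{\lambda_0})\cong CH_*(\susp S_{\lambda_1})$, which is $\SCH_*(S_{\lambda_0})\cong\SCH_*(S_{\lambda_1})$ by Definition~\ref{defn:SuspensionConleyIndex}.

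The natural candidate is $\tilde N=\susp N$. For each $\lambda$, the proof of Theorem~\ref{thm:IsolatedSet} ($\Rightarrow$) shows how to shrink $N$ to a compact $K_1^\lambda$ with $\susp K_1^\lambda$ isolating $\susp S_\lambda$. These shrunken sets depend on $\lambda$, so instead I would argue with $\susp N$ locally in $\lambda$ and patch.

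\textbf{Step 1.} Show that for each $\mu\in[\lambda_0,\lambda_1]$ there is a compact $\tilde N_\mu$ isolating $\susp S_\mu$ under $\Phi_{\cH_\mu}$, obtained from Theorem~\ref{thm:IsolatedSet}.

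\textbf{Step 2.} Using the continuity of the suspended family together with the argument of Theorem~\ref{thm:Robustness} (a contradiction argument with limits of full trajectories in the compact space $\susp X$), show that $\tilde N_\mu$ still isolates $\Inv(\tilde N_\mu,\Phi_{\cH_\lambda})$ for all $\lambda$ in an open interval $J_\mu\ni\mu$. The standard classical fact gives this openness directly, since isolating neighborhoods persist under small perturbations of a continuous family of semiflows.

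\textbf{Step 3.} Identify $\Inv(\tilde N_\mu,\Phi_{\cH_\lambda})$ with $\susp S_\lambda$ for $\lambda\in J_\mu$, possibly after shrinking $J_\mu$. Proposition~\ref{prop:SemiflowInvariantSet} says this invariant set is $\susp A$ with $A=\incl\inv(\Inv(\tilde N_\mu,\Phi_{\cH_\lambda}))$ invariant under $\cH_\lambda$. Since $\tilde N_\mu\subset\susp N$, we get $A\subset\incl\inv(\susp N)=N\cup r(N\cap G)$ by Lemma~\ref{lem:InclInvSusp}. The aim is to show $A\subset N$, hence $A\subset S_\lambda$. For the reverse inclusion I would choose $\tilde N_\mu$ to contain $\susp S_\lambda$ for $\lambda$ near $\mu$, using upper semicontinuity of $\lambda\mapsto S_\lambda$, which follows from the same compactness argument as in Theorem~\ref{thm:Robustness}.

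\textbf{Step 4.} Cover $[\lambda_0,\lambda_1]$ by finitely many $J_\mu$. On each $J_\mu$ the index is constant by classical continuation with the fixed neighborhood $\tilde N_\mu$, and chaining the overlaps gives the isomorphism.

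I expect Step 3 to be the main obstacle. The trouble is points $r(g)\notin N$ with $g\in N\cap G$. An invariant set of $\cH_\lambda$ could pass through such points while its suspension stays in $\susp N$, so $\susp N$ need not isolate $\susp S_\lambda$ exactly. I would handle this as in Theorem~\ref{thm:IsolatedSet}, using a uniform version of $K_1$: a compact $K_1\subset\Int N$ with $r(K_1\cap G)\subset N$ that contains $S_\lambda$ for all $\lambda$ in a neighborhood of $\mu$. Such a $K_1$ exists because $r$ is $\lambda$-independent and $\bigcup_{\lambda\in J}S_\lambda$ is compact and contained in $\Int N$ for a compact interval $J$. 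The compactness and containment come from the closedness of $\{(x,\lambda): x\in S_\lambda\}$, proven by the limiting argument of Theorem~\ref{thm:Robustness}. With that, one can take $\tilde N_\mu=\susp K_1$, and the argument of Theorem~\ref{thm:IsolatedSet} runs verbatim for every nearby $\lambda$.
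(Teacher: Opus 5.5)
Your route differs from the paper's, and your worry is justified. The paper takes $\susp N$ itself as a common isolating neighborhood for all $\lambda\in[\lambda_0,\lambda_1]$. It asserts $\Inv(\susp N,\Phi_{\cH_\lambda})=\susp S_\lambda$ on the strength of Propositions~\ref{prop:HybridInvariantSet} and~\ref{prop:SemiflowInvariantSet}, and then quotes classical continuation once. As you observe, those propositions together with Lemma~\ref{lem:InclInvSusp} only place $\incl\inv(\Inv(\susp N,\Phi_{\cH_\lambda}))$ inside $N\cup r(N\cap G)$. For example, a rest point $p=r(g)\notin N$ of $\varphi_\lambda$ with $g\in N\cap G$ gives a fixed point $\incl(p)=\pi(g,1)\in\susp N\setminus\susp S_\lambda$, which typically lies on $\partial(\susp N)$. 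So $\susp N$ need not even be isolating. Working with $\susp K_1$, where $r(K_1\cap G)\subset N$, does pin down the maximal invariant set correctly. The paper's version is one line but unjustified; yours costs a semicontinuity and chaining argument.

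Two steps of your plan still need repair. \emph{First}, closedness of $\setdef{(x,\lambda)}{x\in S_\lambda}$ cannot be quoted from the limiting argument of Theorem~\ref{thm:Robustness}. That argument ends by passing from $\incl(x^*)\in\Inv(\susp N,\Phi_{\cH_0})$ to $x^*\in\Inv(N,\cH_0)$ via Proposition~\ref{prop:SemiflowInvariantSet}, which is exactly the inference you rejected. It does work if you track more. The approximating full trajectories lie in $\susp S_{\lambda_k}$, hence in the compact set $\tilde M=\incl(N)\cup\pi\big((N\cap r\inv(N))\times[0,1]\big)$, because $r(S_{\lambda_k}\cap G)\subset S_{\lambda_k}\subset N$. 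Moreover $\incl\inv(\tilde M)=N$, so the limit point lies in an $\cH_\lambda$-invariant subset of $N$, i.e.\ in $S_\lambda$.

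\emph{Second}, the interior condition does not run verbatim. Suppose some $h\in G\setminus K_1$ has $r(h)\in S_\lambda$; $K_1\subset\Int N$ cannot exclude this, since $h$ may lie outside $N$. Then every neighborhood of $\incl(r(h))=\pi(h,1)$ contains points $\pi(h,s)$ with $s<1$ close to $1$, and these are not in $\susp K_1$. So $\susp K_1$ is not a neighborhood of $\susp S_\lambda$. Case~2 in the proof of Theorem~\ref{thm:IsolatedSet} overlooks the same preimages, so you cannot lean on it here.

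Both problems disappear with
\[
\tilde N=\tilde M\cup\pi\big(r\inv(N)\times[\tfrac12,1]\big).
\]

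\textbf{Same maximal invariant set.} Take $h\in r\inv(N)\setminus N$ and $s\in[\tfrac12,1)$. A backward orbit through $\pi(h,s)$ runs down the cylinder and leaves $\tilde N$. Hence $\Inv(\tilde N,\Phi_{\cH_\lambda})=\Inv(\tilde M,\Phi_{\cH_\lambda})=\susp S_\lambda$.

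\textbf{Interior.} For $x\in S_\lambda$, choose an open $B\ni x$ with $B\subset N$ and $r(B\cap G)\subset N$, and take $\varepsilon<\tfrac12$. Then the $\pi$-saturated open set $\iota(B)\cup\big((B\cap G)\times[0,\varepsilon)\big)\cup\big(r\inv(B)\times(1-\varepsilon,1]\big)$ maps into $\tilde N$. Small product neighborhoods of the cylinder points of $\susp S_\lambda$ also map into $\tilde M$. So $\susp S_\lambda\subset\Int\tilde N$.

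Since this single $\tilde N$ isolates $\susp S_\lambda$ for every $\lambda\in[\lambda_0,\lambda_1]$ at once, the sets $K_1$, the semicontinuity and the patching all become unnecessary. The paper's one-shot appeal to classical continuation then goes through with $\tilde N$ in place of $\susp N$.
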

\begin{proof}
  By the definition of continuation, $N$ is an isolating neighborhood for $\cH_\lambda$ for all $\lambda \in [\lambda_0, \lambda_1]$, and $S_\lambda = \Inv(N, \cH_\lambda)$. Since the family is flow-dependent, the reset map $r$ is independent of $\lambda$, so the suspension space $\susp X$ is a fixed compact metrizable space for all $\lambda$.

  By Theorem~\ref{thm:IsolatedSet}, for each $\lambda \in [\lambda_0, \lambda_1]$, $\susp S_\lambda$ is an isolated invariant set under $\Phi_{\cH_\lambda}$. Moreover, by Propositions~\ref{prop:HybridInvariantSet} and \ref{prop:SemiflowInvariantSet}, the correspondence $S \mapsto \susp S$ is a bijection between invariant sets of $\cH_\lambda$ in $N$ and invariant sets of $\Phi_{\cH_\lambda}$ in $\susp N$. Thus $\Inv(\susp N, \Phi_{\cH_\lambda}) = \susp S_\lambda$.

  By Proposition~\ref{prop:FlowDependentContinuity}, the map $(\lambda, z, t) \mapsto \Phi_{\cH_\lambda}(t,z)$ is continuous on $[\lambda_0, \lambda_1] \times \susp X \times \Rp$. Since $[\lambda_0, \lambda_1]$ is connected and $\susp N$ isolates $\susp S_\lambda$ for each $\lambda$, the classical continuation theorem for semiflows \cite{Handbook2002,Rybakowski1987} yields that the Conley index $CH_*(\susp S_\lambda)$ is constant for $\lambda \in [\lambda_0, \lambda_1]$. Therefore $CH_*(\susp S_{\lambda_0}) \cong CH_*(\susp S_{\lambda_1})$, and by Definition~\ref{defn:SuspensionConleyIndex}, $\SCH_*(S_{\lambda_0}) \cong \SCH_*(S_{\lambda_1})$.
\end{proof}

\begin{cor}
  Let $\setof{\cH_\lambda}_{\lambda \in \Lambda}$ be a flow-dependent family satisfying UTGC. Under the assumptions of Theorem~\ref{thm:Robustness}, $\SCH_*(S_\lambda)$ is locally constant.
\end{cor}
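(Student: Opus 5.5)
The corollary should follow by combining Theorem~\ref{thm:Robustness} with Theorem~\ref{thm:Continuation}, applied at an arbitrary parameter value rather than only at $\lambda=0$. Fix $\lambda^* \in \Lambda$ such that $N$ is an isolating neighborhood for $\cH_{\lambda^*}$, and set $S_\lambda = \Inv(N,\cH_\lambda)$. The goal is an $\varepsilon>0$ with $\SCH_*(S_\lambda)\cong\SCH_*(S_{\lambda^*})$ for all $\lambda \in \Lambda$ with $|\lambda-\lambda^*|<\varepsilon$.

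The first step is to reparametrize. I will consider the family $\mu \mapsto \cH_{\lambda^*+\mu}$, with $\mu$ restricted to a small symmetric interval around $0$ inside $\Lambda - \lambda^*$; at an endpoint of $\Lambda$ this becomes a one-sided interval. I will check that the shifted family is still flow-dependent and still satisfies the UTGC, since those conditions are pointwise in $\lambda$ together with joint continuity. Rescaling the interval to $[-1,1]$ then puts us in the setting of Theorem~\ref{thm:Robustness}.

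The second step is to apply Theorem~\ref{thm:Robustness} to the shifted family. This gives $\varepsilon>0$ such that $N$ isolates $\cH_\lambda$ for every $\lambda$ with $|\lambda-\lambda^*|<\varepsilon$.

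The third step is to use continuation. For any such $\lambda$, the closed interval between $\lambda^*$ and $\lambda$ lies inside $(\lambda^*-\varepsilon,\lambda^*+\varepsilon)$. Hence $N$ isolates $\cH_\nu$ for every $\nu$ in that interval, so by Definition~\ref{defn:Continuation} the sets $S_{\lambda^*}$ and $S_\lambda$ are related by continuation. Theorem~\ref{thm:Continuation} then gives $\SCH_*(S_\lambda)\cong\SCH_*(S_{\lambda^*})$. If $\lambda<\lambda^*$, Definition~\ref{defn:Continuation} is written for $[\lambda_0,\lambda_1]$ with $\lambda_0\le\lambda_1$, so I will simply order the endpoints.

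The main obstacle is bookkeeping rather than substance. Theorem~\ref{thm:Robustness} is stated only at $\lambda=0$, so I must confirm that the reparametrization preserves the hypotheses and handle the boundary case $\lambda^*=\pm1$. If the corollary is meant only at $\lambda=0$, the second and third steps apply directly without reparametrizing.
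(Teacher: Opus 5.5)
Your proposal is correct and is essentially the paper's argument: Theorem~\ref{thm:Robustness} supplies $\varepsilon$, every closed interval from the base point to $\lambda$ then lies where $N$ isolates, and Theorem~\ref{thm:Continuation} gives $\SCH_*(S_\lambda)\cong\SCH_*(S_{\lambda^*})$. The paper only does this at $\lambda^*=0$, so your reparametrization is a harmless extension to every parameter where $N$ isolates. At $\lambda^*=\pm1$, use the even reparametrization $\nu\mapsto\lambda^*\mp\delta|\nu|$ rather than an affine rescaling of the one-sided interval, so that $\nu=0$ still corresponds to $\lambda^*$.
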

\begin{proof}
  By Theorem~\ref{thm:Robustness}, there exists $\varepsilon > 0$ such that $N$ isolates $S_\lambda$ for all $\lambda \in (-\varepsilon, \varepsilon)$. Thus any $S_\lambda$ with $|\lambda| < \varepsilon$ is related by continuation to $S_0$, and by Theorem~\ref{thm:Continuation}, $\SCH_*(S_\lambda) \cong \SCH_*(S_0)$.
\end{proof}

\subsection{Reconstruction Theorems}

By leveraging classical results from Conley index theory through the suspension semiflow, we can obtain existence results for fixed points and periodic orbits in hybrid systems.

\begin{thm}
  \label{thm:HybridWazewski}
  If $\SCH_*(S) \neq 0$, then $S \neq \emptyset$.
\end{thm}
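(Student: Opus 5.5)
The plan is to reduce the statement directly to the classical Wa\.zewski property for the semiflow $\Phi_\cH$ on $\susp X$, using the correspondence of Section~\ref{sec:suspension}. By Definition~\ref{defn:SuspensionConleyIndex}, $\SCH_*(S) = CH_*(\susp S)$, where $\susp S$ is an isolated invariant set for $\Phi_\cH$ by Theorem~\ref{thm:IsolatedSet}. Since $\cH$ satisfies the TGC, Theorem~\ref{thm:hybridsemiflow} guarantees that $(\susp X,\Phi_\cH)$ is a genuine (global, continuous) semiflow on the compact metric space $\susp X$. This is exactly the setting in which the classical Wa\.zewski principle recalled in Section~\ref{sec:preliminaries} applies. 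Hence $CH_*(\susp S)\neq 0$ implies $\susp S \neq \emptyset$.

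It then remains to transfer nonemptiness back to $X$. I will argue by contrapositive. If $S=\emptyset$, then $\pi_0\inv(\emptyset)=\emptyset$, so $\susp S = \pi(\pi_0\inv(S)) = \emptyset$. The empty set has the index pair $(\emptyset,\emptyset)$, which gives $CH_*(\emptyset)=H_*(\emptyset,\emptyset;\F)=0$. Consequently $\SCH_*(S)=0$.

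Alternatively, I can argue forward. Given $\susp S\neq\emptyset$, Proposition~\ref{prop:HybridInvariantSet} gives $S=\incl\inv(\susp S)$. A point of $\susp S$ is either of the form $\incl(x)$ with $x\in S$, or of the form $\pi(g,s)$ with $g\in S\cap G$. In the second case Lemma~\ref{lem:GuardInvariant} gives $\incl(g)\in\susp S$. Either way $S\neq\emptyset$.

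The only step needing care is confirming that the classical Wa\.zewski property is available for semiflows rather than flows, since the index pair definition used here (Definition~\ref{defn:IndexPair}) is the semiflow version. I would cite \cite{Conley1978,Rybakowski1987,Salamon1985}. The argument itself is essentially immediate once well-definedness of $CH_*$ is granted: independence of the index pair lets us compute the index of $\emptyset$ with $(\emptyset,\emptyset)$.
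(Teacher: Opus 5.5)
Your proposal is correct and matches the paper's proof: both reduce to the classical Wa\.zewski property for the semiflow $\Phi_\cH$ via $\SCH_*(S)=CH_*(\susp S)$, and your forward transfer step is exactly the paper's argument through Lemma~\ref{lem:GuardInvariant}. Your contrapositive version is a bit more economical. Since $\susp(\emptyset)=\pi(\pi_0\inv(\emptyset))=\emptyset$ follows directly from the definition of $\susp$ (equivalently, any point $\pi(g,s)$ of $\susp S$ already has $g=\pi_0(g,s)\in S$), neither Lemma~\ref{lem:GuardInvariant} nor Proposition~\ref{prop:HybridInvariantSet} is actually needed.
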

\begin{proof}
  Assume $\SCH_*(S) \neq 0$. By definition, $CH_*(\susp S) \neq 0$, so the classical Wazewski property yields $\susp S \neq \emptyset$. Let $\tilde{z} \in \susp S$. If $\tilde{z} \in \incl(X)$, then $\incl\inv(\tilde{z}) \neq \emptyset$. If $\tilde{z} \in \pi(G \times (0,1))$, Lemma~\ref{lem:GuardInvariant} gives $\pi(\{g\} \times [0,1]) \subset \susp S$ for some $g \in G$, so $\incl(g) \in \susp S \cap \incl(X)$. In either case, $S = \incl\inv(\susp S) \neq \emptyset$.
\end{proof}

\begin{thm}
  \label{thm:FixedPoint}
  Let $S$ be an isolated invariant set. If
  \[
    \SCH_k(S) \cong
    \begin{cases}
      \F & \text{if } k=n, \\
      0 & \text{otherwise},
    \end{cases}
  \]
  then $S$ contains a fixed point $p$ of $\cH$. Moreover, $p \notin G$.
\end{thm}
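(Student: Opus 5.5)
By Definition~\ref{defn:SuspensionConleyIndex}, the hypothesis says $CH_*(\susp S)$ is $\F$ in degree $n$ and zero in all other degrees. Theorem~\ref{thm:IsolatedSet} shows that $\susp S$ is an isolated invariant set of the semiflow $\Phi_\cH$, and Theorem~\ref{thm:hybridsemiflow} shows that $\Phi_\cH$ is a genuine continuous semiflow on the compact metric space $\susp X$. So Theorem~\ref{thm:McCord}(i) applies directly and gives a point $\tilde{p} \in \susp S$ with $\Phi_\cH(t,\tilde{p}) = \tilde{p}$ for all $t \geq 0$. What remains is to translate this back to $\cH$ and to locate $\tilde{p}$.

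First I rule out the cylinder interior. Suppose $\tilde{p} = \pi(g,s)$ with $g \in G$ and $s \in (0,1)$. Then $\Phi_\cH(t,\tilde{p}) = \pi(g,s+t)$ for small $t>0$. Because $\pi$ is injective on $G \times (0,1)$, this point differs from $\tilde{p}$, a contradiction.

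Next I rule out $\tilde{p} = \incl(x)$ with $x \in G$. In that case $\stoptime(x)=0$, so $\Phi_\cH(t,\tilde{p}) = \pi(x,t)$ for $t \in (0,1)$. This point lies in $\pi(G\times(0,1))$, which is disjoint from $\incl(X)$, as already used in Lemma~\ref{lem:InclInvSusp}. So again $\Phi_\cH(t,\tilde{p}) \neq \tilde{p}$. Hence $\tilde{p} = \incl(p)$ for some $p \in X \setminus G$.

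Now set $p = \incl\inv(\tilde p)$; by Proposition~\ref{prop:HybridInvariantSet}, $p \in \incl\inv(\susp S) = S$. To see that $p$ is fixed for $\cH$, I need $\stoptime(p) = \infty$ and $\varphi(t,p)=p$ for all $t$. Suppose instead $\stoptime(p) = \tau < \infty$. Then $\Phi_\cH(\tau + 1/2, \tilde{p}) = \pi(\varphi(\tau,p), 1/2) \notin \incl(X)$, contradicting that $\tilde{p}$ is fixed. So $\stoptime(p)=\infty$, and $\varphi(t,p)$ is defined for all $t\ge0$. For every $t$ we then have $\incl(\varphi(t,p)) = \Phi_\cH(t,\tilde{p}) = \tilde{p} = \incl(p)$, using Lemma~\ref{lem:InclusionOrbit} with $n=0$. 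Since $\incl$ is injective on $X \setminus r(G)$ and identifies only $r(g)$ with $\pi(g,1)$, which lies outside $\incl(X)$ unless identified, injectivity on $\iota(X)$ holds, and $\varphi(t,p) = p$. Thus $\psi_p(t,0)=p$ for all $t$: $p$ is a fixed point of $\cH$ in $S$, and $p\notin G$.

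The main obstacle is the point-set bookkeeping on the quotient: checking that $\pi$ restricted to $\iota(X)$ is injective and that $\pi(G\times(0,1))$ is disjoint from $\incl(X)$. The only nontrivial identification is $(g,1)\sim\iota(r(g))$, and it never merges two points of $\iota(X)$, so I expect both facts to follow directly from Definition~\ref{defn:SuspensionSemiflow}.
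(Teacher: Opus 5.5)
Your proposal is correct and follows the paper's proof. You apply Theorem~\ref{thm:McCord}(i) to $\susp S$, exclude $\pi(G\times(0,1))$ and $\incl(G)$ because $\Phi_\cH$ moves points along the cylinder, and pull the fixed point back through $\incl$. The only difference is cosmetic: where the paper cites Lemma~\ref{lem:GuardInvariant} and Lemma~\ref{lem:SuspensionOrbit}, you argue directly from the formula for $\varphi'$, Lemma~\ref{lem:InclusionOrbit}, and the injectivity of $\incl$. That injectivity does hold, since $\pi$ only glues $(g,1)\notin\iota(X)$ to $\iota(r(g))$.
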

\begin{proof}
  Given that $\SCH_*(S) = CH_*(\susp S)$, it follows from Theorem~\ref{thm:McCord}(i) that $\susp S$ contains a fixed point. Note that if $\tilde{q}$ is a fixed point of $\Phi_\cH$, then $\tilde{q} \notin \pi(G \times (0,1))$ since $\Phi_\cH$ is gradient-like along the cylinder. By Lemma~\ref{lem:GuardInvariant}, it follows that $\incl\inv(\tilde{q}) \cap G = \emptyset$, and $\incl\inv(\tilde{q}) = \setof{p}$ for some $p \in X \setminus G$. By Lemma~\ref{lem:SuspensionOrbit}, $\psi([0,\infty),p)=\setof{p}$.
\end{proof}

\begin{defn}
  \label{defn:HybridPeriodic}
  Let $\cH=(X,\varphi,G,r)$ be an HDS and let $x \in X$. The point $x$ is a \emph{hybrid periodic point} of $\cH$ if there exists $(T,m) \in \dom(\psi_x)$ with $T+m>0$ such that $\psi_x(t+T,k+m)=\psi_x(t,k)$ for every $(t,k)\in\dom(\psi_x)$. The image $\psi_x([0,T],m)$ is a \emph{hybrid periodic orbit} of $\cH$. The orbit is \emph{purely continuous} if $m=0$ and \emph{purely discrete} if $T=0$.
\end{defn}

\begin{thm}
  \label{thm:PeriodicOrbit}
  Let $S$ be an isolated invariant set with $S \cap G \neq \emptyset$. If
  \[
    \SCH_k(S) \cong
    \begin{cases}
      \F & \text{if } k=n,n+1, \\
      0 & \text{otherwise},
    \end{cases}
  \]
  then $S$ contains a hybrid periodic orbit of $\cH$.
\end{thm}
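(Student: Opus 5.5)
The plan is to reduce the statement to Theorem~\ref{thm:McCord}(ii) applied to $\susp S$ under $\Phi_\cH$, and then transport the periodic orbit back to $\cH$.

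\textbf{Homological hypothesis.} By Definition~\ref{defn:SuspensionConleyIndex}, $CH_*(\susp S)\cong\SCH_*(S)$ is $\F$ in degrees $n,n+1$ and zero otherwise, so it is nonzero.
\begin{itemize}
  \item If $n$ is even, write $n=2j$; then $\dim CH_{2j}=\dim CH_{2j+1}=1$.
  \item If $n$ is odd, write $n+1=2j$; then $\dim CH_{2j}=\dim CH_{2j-1}=1$.
\end{itemize}
For every other index both dimensions vanish. Hence the alternative in Theorem~\ref{thm:McCord}(ii) holds for all indices.

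\textbf{Poincar\'e section.} This is where the hypothesis $S\cap G\neq\emptyset$ is used, and it is the main obstacle. The natural candidate is the mid-cylinder slice
\[
\Xi=\pi\bigl((G\cap N)\times\{1/2\}\bigr),
\]
with $N$ an isolating neighborhood of $S$. In the suspension, every trajectory crossing $\Xi$ does so transversally, with unit speed in the $s$-coordinate. It crosses in one direction only, and the section is a closed local section because the cylinder coordinate is a genuine flow-box.

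What must be shown is that every full trajectory in $\susp S$ meets $\Xi$ infinitely often forward and backward in time. This uses Lemma~\ref{lem:GuardInvariant} and the fact that the time spent between jumps in a compact isolated set should be bounded. That bound would come from TGC near $G$ together with compactness.

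This step may fail as stated. A trajectory in $S$ could avoid $G$ entirely, for example an equilibrium of $\varphi$ in $S\setminus G$. In that case I would try to argue that such sets are excluded. Otherwise I would restrict to the part of $\susp S$ through the cylinder, using a Morse-type decomposition to isolate the recurrent hybrid part. The expected resolution is an implicit assumption that $\Xi$ is a Poincar\'e section for $\susp S$ in McCord's sense, with return time continuous and bounded. I would verify continuity of the return time from the continuity of $\stoptime$ on $U$ provided by TGC.

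\textbf{Transport back to $\cH$.} Granting the section, Theorem~\ref{thm:McCord}(ii) yields a periodic orbit $\tilde\gamma\subset\susp S$ of $\Phi_\cH$ with some period $\tau>0$.

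If $\tilde\gamma$ meets $\pi(G\times(0,1))$, then by Lemma~\ref{lem:GuardInvariant} it passes through $\incl(x)$ for some $x\in S\cap G$, or it meets $\incl(X)$ in general.

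Pick $\tilde\gamma(0)=\incl(x)$ with $x\in S$. Decompose $\tau=T+m$, where $m$ is the number of cylinder traversals, as in the proof of Lemma~\ref{lem:SuspensionOrbit}. Lemma~\ref{lem:InclusionOrbit}, together with injectivity of $\incl$ off $r(G)$ and the determinism of $\psi_x$, then gives $\psi_x(t+T,k+m)=\psi_x(t,k)$ for all $(t,k)\in\dom(\psi_x)$. So $x$ is a hybrid periodic point in the sense of Definition~\ref{defn:HybridPeriodic}, and its orbit lies in $\incl\inv(\susp S)=S$ by Proposition~\ref{prop:HybridInvariantSet}.

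Finally, $T+m=\tau>0$. The case $m=0$, a purely continuous orbit, is also allowed by the definition.
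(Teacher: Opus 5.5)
Your route is the paper's route. You verify the dimension condition of Theorem~\ref{thm:McCord}(ii) for $CH_*(\susp S)$, take $\Xi=\pi(G\times\{1/2\})$ as the Poincar\'e section, and pull the resulting periodic orbit of $\Phi_\cH$ back to a hybrid periodic point via Lemmas~\ref{lem:InclusionOrbit} and~\ref{lem:SuspensionOrbit}.

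The step you explicitly ``grant'' is exactly the step the paper's proof asserts without argument (``$\Xi$ is a Poincar\'e section''). So your proposal is as complete as the published proof. Your doubt is also well founded. The hypothesis $S\cap G\neq\emptyset$ only guarantees that \emph{some} orbit of $\susp S$ crosses $\Xi$. A Poincar\'e section in the sense of \cite{McCord1995} must be crossed in forward time by \emph{every} point of $\susp S$.

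Three refinements follow.

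First, the equilibrium scenario you raise does not endanger the stated conclusion. By Definition~\ref{defn:HybridPeriodic}, an equilibrium $p\notin G$ of $\varphi$ is itself a purely continuous hybrid periodic point, with $(T,m)=(1,0)$. So that case can simply be split off.

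Second, what neither you nor the paper rules out is a non-stationary trajectory of $S$ that never reaches $G$. Your Morse-decomposition idea does not immediately help, because the index of a sub-invariant set is not controlled by $\SCH_*(S)$. A clean repair is to add the hypothesis that every forward hybrid trajectory starting in $S$ meets $G$. Then every point of $\susp S$ enters the cylinder in positive time and crosses $\Xi$ transversally. Moreover, $\Xi$ is a flow box inside the open set $\pi(G\times(0,1))$, and $\Xi\cap\susp S$ is compact, so $\Xi$ is a genuine Poincar\'e section.

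Third, $\incl$ is injective on all of $X$, since each $\pi$-class contains at most one point of $\iota(X)$. So the qualifier ``off $r(G)$'' in your transport step is unnecessary. Once $\Xi$ is a section, the periodic orbit must cross it, which forces $m\geq 1$.
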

\begin{proof}
  Under the hypothesis, $\SCH_*(S) = CH_*(\susp S)$ satisfies the conditions of Theorem~\ref{thm:McCord}(ii), and $\Xi = \pi(G \times \setof{1/2})$ is a Poincaré section, so $\susp S$ contains a periodic orbit. Let $x \in X$ be such that $\susp(x)$ intersects the periodic orbit. By Lemma~\ref{lem:SuspensionOrbit}, the orbit is generated by $\susp(\psi([0,\infty),x)) = \Phi_\cH([0,\infty),\incl(x))$, so $x$ is a hybrid periodic point of $\cH$. If $\psi([0,\infty),x) \cap (X \setminus G) = \emptyset$, then $x \in G$ and
  \[
    \psi([0,\infty),x) = \setof{x,r(x),r^2(x),\ldots,r^{m-1}(x)},
  \]
  where $m$ is the period of the orbit, hence the orbit is purely discrete.
\end{proof}

If $S \cap G = \emptyset$, then Theorem~\ref{thm:McCord}(ii) applies directly to the continuous dynamics, and no hybrid structure is involved.

\begin{cor}
  \label{cor:PeriodicOrbitI}
  Under the hypothesis of Theorem~\ref{thm:PeriodicOrbit}, if $r$ has no periodic points, then $S$ contains a hybrid periodic orbit that is not purely discrete.
\end{cor}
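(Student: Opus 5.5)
The plan is to take the hybrid periodic orbit produced by Theorem~\ref{thm:PeriodicOrbit} and show that it cannot be purely discrete unless $r$ has a periodic point. Under the hypotheses, Theorem~\ref{thm:PeriodicOrbit} gives a hybrid periodic point $x \in S$ with period $(T,m) \in \dom(\psi_x)$, $T+m>0$, such that $\psi_x(t+T,k+m)=\psi_x(t,k)$ for all $(t,k)\in\dom(\psi_x)$. We suppose, for contradiction, that every hybrid periodic orbit in $S$ is purely discrete. In particular this orbit has $T=0$, and then $m\ge 1$ since $T+m>0$.

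\textbf{Step 1: the whole trajectory lies in $G$.} With $T=0$, the identity $\psi_x(t,k+m)=\psi_x(t,k)$ has to hold on the entire domain. Hence the continuous times $t_0=0,t_1,\dots$ satisfy $t_{k+m}=t_k$. Since the $t_k$ are nondecreasing, this forces $t_0=t_1=\cdots=t_m=0$, and by periodicity $t_k=0$ for all $k$. By Definition~\ref{defn:HybridTrajectory}(ii) we get $\stoptime(\psi_x(0,k))=0$ for every $k$. Since $G$ is closed, the TGC gives that points of $U\setminus G$ have positive impact time, so $\stoptime(y)=0$ forces $y\in G$. Thus every $\psi_x(0,k)\in G$, and by Definition~\ref{defn:HybridTrajectory}(iii) we have $\psi_x(0,k+1)=r(\psi_x(0,k))$, so $\psi_x(0,k)=r^k(x)$.

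\textbf{Step 2: conclude.} Periodicity now reads $r^{m}(x)=x$ with $m\ge 1$. So $x\in G$ is a periodic point of $r$, which contradicts the hypothesis. This is exactly the case singled out at the end of the proof of Theorem~\ref{thm:PeriodicOrbit}, where $\psi([0,\infty),x)=\setof{x,r(x),\dots,r^{m-1}(x)}$. Therefore the orbit obtained there has $T>0$ and is not purely discrete.

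\textbf{Main obstacle.} The delicate point is justifying $\stoptime(y)=0\Rightarrow y\in G$. The infimum in \eqref{eq:ImpactTime} could in principle be $0$ without being attained. Closedness of $G$ together with continuity of $t\mapsto\varphi(t,y)$ shows the infimum is attained whenever it is finite: $\varphi(t_j,y)\in G$ with $t_j\downarrow 0$ gives $y=\varphi(0,y)\in G$. The remaining care is in Step 1, where one must use that periodicity is imposed on the full domain. That is what rules out a periodic orbit that accumulates continuous time only in non-periodic segments.
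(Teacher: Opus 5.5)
Your argument is correct and is essentially the paper's. The paper gives no separate proof and relies on the closing remark in the proof of Theorem~\ref{thm:PeriodicOrbit}, that an orbit avoiding $X\setminus G$ is $\setof{x,r(x),\ldots,r^{m-1}(x)}$; you supply the direction the corollary actually needs ($T=0$ forces every jump time $t_k$ to vanish, so the trajectory lies in $G$ and $r^m(x)=x$), and, as your final paragraph already shows, $\stoptime(y)=0\Rightarrow y\in G$ follows from closedness of $G$ and continuity of $\varphi$ for every $y\in X$, without the TGC.
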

\begin{cor}
  \label{cor:PeriodicOrbitII}
  Under the hypothesis of Theorem~\ref{thm:PeriodicOrbit}, if $r(G) \cap G = \emptyset$, then $S$ contains a hybrid periodic orbit that is not purely discrete.
\end{cor}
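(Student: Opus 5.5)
The statement to prove is Corollary~\ref{cor:PeriodicOrbitII}: if $r(G)\cap G=\emptyset$, the hybrid periodic orbit produced by Theorem~\ref{thm:PeriodicOrbit} cannot be purely discrete. I would prove this by contradiction, reducing it to the fact that a purely discrete hybrid periodic orbit forces $r$ to have a periodic point lying in $G$ whose image also lies in $G$.

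First, Theorem~\ref{thm:PeriodicOrbit} gives a hybrid periodic point $x\in S$ with period $(T,m)$, $T+m>0$. Suppose the orbit were purely discrete, so $T=0$ and $m\ge 1$. Then the periodicity relation $\psi_x(t,k+m)=\psi_x(t,k)$ forces every flow interval $I_k$ to be degenerate. Indeed, if some $I_k$ had positive length, the total continuous time over one period would be positive. That contradicts $t$ being unchanged under the shift by $(0,m)$.

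Hence $t_{k+1}=t_k$ for all $k$, which means $\stoptime(\psi_x(t_k,k))=0$, and so every point of the orbit lies in $G$. In particular $x\in G$, and $\psi_x(0,1)=r(x)\in G$. This contradicts $r(G)\cap G=\emptyset$.

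Alternatively, one can invoke the description at the end of the proof of Theorem~\ref{thm:PeriodicOrbit}. A purely discrete orbit is $\setof{x,r(x),\ldots,r^{m-1}(x)}$ with all points in $G$, so $r$ would map a point of $G$ back into $G$, which is impossible. Either way the orbit must contain points of $X\setminus G$, that is, it is not purely discrete.

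The main subtlety is justifying that $T=0$ really forces all iterates to lie in $G$. This comes from the definition of the hybrid trajectory, where $t_{n+1}=t_n+\stoptime(\gamma_n(t_n))$, combined with the fact that $\stoptime$ vanishes exactly on $G$ because $G$ is closed. If $\stoptime(y)=0$ for some $y\notin G$, the TGC continuity of $\stoptime$ near $G$ still gives $y\in\cl G=G$. With that settled, the argument is immediate.
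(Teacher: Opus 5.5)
Your proof is correct and follows essentially the paper's route. The end of the proof of Theorem~\ref{thm:PeriodicOrbit} identifies a purely discrete orbit with $\setof{x,r(x),\ldots,r^{m-1}(x)}\subset G$, which $r(G)\cap G=\emptyset$ forbids, and your observation that $T=0$ forces every flow interval $I_k$ to be degenerate supplies the implication ``purely discrete $\Rightarrow$ orbit contained in $G$'' that the paper states only in the converse direction.

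One minor fix. $\stoptime(y)=0$ implies $y\in G$ simply because there are times $t_j\to 0$ with $\varphi(t_j,y)\in G$, $\varphi(t_j,y)\to\varphi(0,y)=y$, and $G$ is closed. The TGC continuity of $\stoptime$, which holds only near $G$, is not what gives this.
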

\begin{rem}
  A sufficient, and stronger, condition for $r$ to not have periodic points is that $r(G) \cap G = \emptyset$, that is, there is no discrete dynamics involved in $\cH$.
\end{rem}

\subsection{Hybrid Morse Representations}

The correspondence between invariant sets in $\cH$ and $\Phi_\cH$ extends to Morse representations. Given a Morse representation of an isolated invariant set in the suspension semiflow, we obtain a corresponding representation in the hybrid system.

\begin{defn}
  \label{defn:HybridMorseRepresentation}
  Let $S \subset X$ be an isolated invariant set under $\cH$. A \emph{hybrid Morse representation} of $S$ is a finite collection $\sM = \{M_1, \ldots, M_n\}$ of mutually disjoint, nonempty, compact invariant subsets of $S$, together with a strict partial order $<$ on $\sM$, such that $\{\susp M_1, \ldots, \susp M_n\}$ with the induced order is a Morse representation of $\susp S$ under $\Phi_\cH$.
\end{defn}
\begin{lem}
  \label{lem:CompactIsolatedSet}
  If $S \subset X$ is an isolated invariant set under $\cH$, then $S$ is compact.
\end{lem}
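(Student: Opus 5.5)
The plan is to transfer compactness from the suspension. Since $S$ is isolated under $\cH$, Theorem~\ref{thm:IsolatedSet} makes $\susp S$ an isolated invariant set for the semiflow $\Phi_\cH$ on the compact metric space $\susp X$, and Proposition~\ref{prop:HybridInvariantSet} gives $S = \incl\inv(\susp S)$. The map $\incl = \pi\circ\iota$ is continuous. It therefore suffices to show that $\susp S$ is closed in $\susp X$: then $S$ is a closed subset of the compact space $X$, hence compact.

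To show $\susp S$ is closed, I would take an isolating neighborhood $\tilde N$ for $\susp S$ (for instance $\susp K_1$ from the proof of Theorem~\ref{thm:IsolatedSet}) and prove that $\Inv(\tilde N,\Phi_\cH)$ is closed. This is the standard argument for continuous semiflows on compact spaces. Let $\tilde z_k \in \Inv(\tilde N,\Phi_\cH)$ with $\tilde z_k \to \tilde z$. Each $\tilde z_k$ has a full trajectory $\gamma_k\colon\R\to\tilde N$.

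\emph{Forward part.} Continuity of $\Phi_\cH$ gives $\Phi_\cH(t,\tilde z)=\lim_k\gamma_k(t)\in\tilde N$ for every $t\ge 0$, since $\tilde N$ is closed.

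\emph{Backward part.} Compactness of $\tilde N$ and a diagonal argument over $t=-1,-2,\dots$ give a subsequence along which $\gamma_k(-m)\to w_m$ for every $m\in\N$. Continuity gives $\Phi_\cH(1,w_{m+1})=w_m$ and $\Phi_\cH(m,w_m)=\tilde z$. Concatenating the segments $\Phi_\cH([0,1],w_{m+1})$ produces a backward trajectory through $\tilde z$. Each segment lies in $\tilde N$, since it is the limit of $\gamma_k([-m-1,-m])$.

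Hence $\tilde z\in\Inv(\tilde N,\Phi_\cH)=\susp S$, so $\susp S$ is closed and therefore compact.

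As a remark, one could instead argue directly in $X$ with hybrid trajectories. I expect that to be the main obstacle: limits of hybrid trajectories can lose or gain jumps (grazing, accumulation of resets), and the reset is discontinuous in time. Routing through $\Phi_\cH$, which is a genuine continuous semiflow by Theorem~\ref{thm:hybridsemiflow} under the TGC, avoids this entirely.

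The remaining points are routine:
\begin{itemize}
\item Continuity of $\iota$ and $\pi$ holds by the quotient and mapping-cylinder constructions.
\item The identity $S=\incl\inv(\susp S)$ holds because $S$ is invariant, hence $r(S\cap G)\subseteq S$, via Lemma~\ref{lem:InclInvSusp}.
\item The diagonal extraction of the backward orbit is valid because $\tilde N$ is compact, being closed in the compact metric space $\susp X$.
\end{itemize}
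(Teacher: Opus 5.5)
Your argument is essentially the paper's proof. You pass to $\susp S$ via Theorem~\ref{thm:IsolatedSet} and use that a maximal invariant set of a continuous semiflow inside a compact set is closed; the paper simply cites classical Conley theory here, whereas you correctly spell out the forward-limit and diagonal backward-orbit argument. You then pull back along the continuous map $\incl$ using $S=\incl\inv(\susp S)$, with closedness in the compact space $X$ replacing the paper's appeal to $S\subset K$.

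One caveat applies equally to the paper. The ($\Rightarrow$) proof of Theorem~\ref{thm:IsolatedSet}, including the construction of the $\susp K_1$ you mention, already uses compactness of $S$, so strictly the reasoning is circular. You can bypass that theorem by running your closedness argument on the compact set $\tilde K=\incl(K)\cup\pi\bigl(\{g\in K\cap G : r(g)\in K\}\times[0,1]\bigr)$, where $K$ is the given isolating neighborhood with $S=\Inv(K,\cH)$. This set satisfies $\incl\inv(\tilde K)=K$ and, by Propositions~\ref{prop:HybridInvariantSet} and~\ref{prop:SemiflowInvariantSet}, $\Inv(\tilde K,\Phi_\cH)=\susp S$.
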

\begin{proof}
  By Theorem~\ref{thm:IsolatedSet}, $\susp S$ is an isolated invariant set under $\Phi_\cH$. Since $\susp X$ is a compact metric space and $\Phi_\cH$ is a continuous semiflow, $\susp S$ is compact by classical Conley theory. As $\incl: X \to \susp X$ is continuous, $S = \incl\inv(\susp S)$ is closed. Since $S \subset K$ for some compact isolating neighborhood $K$, $S$ is compact.
\end{proof}
\begin{prop}
  \label{prop:MorseCorrespondence}
  Let $S \subset X$ be an isolated invariant set under $\cH$.
  \begin{enumerate}[(i)]
    \item If $\{\tilde{M}_1, \ldots, \tilde{M}_n\}$ is a Morse representation of $\susp S$ under $\Phi_\cH$, then the sets $M_i = \incl\inv(\tilde{M}_i)$ form a hybrid Morse representation of $S$.
    \item If $\{M_1, \ldots, M_n\}$ is a hybrid Morse representation of $S$, then each $M_i$ is an isolated invariant set under $\cH$ with $\SCH_*(M_i) = CH_*(\susp M_i)$.
  \end{enumerate}
\end{prop}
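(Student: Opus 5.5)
For (i), the idea is to pull everything back along $\incl$ using the bijection between invariant sets established in Propositions~\ref{prop:HybridInvariantSet} and~\ref{prop:SemiflowInvariantSet}. Each Morse set $\tilde M_i$ is a compact invariant subset of $\susp S$. By Proposition~\ref{prop:SemiflowInvariantSet}, $M_i = \incl\inv(\tilde M_i)$ is invariant under $\cH$ and satisfies $\susp M_i = \tilde M_i$. The remaining properties follow directly:
\begin{itemize}
\item $M_i$ is closed because $\incl$ is continuous and $\tilde M_i$ is closed, and it is contained in the compact set $S$ (Lemma~\ref{lem:CompactIsolatedSet}), so it is compact.
\item $M_i \subset \incl\inv(\susp S) = S$ by Proposition~\ref{prop:HybridInvariantSet}.
\item $M_i$ is nonempty by the argument of Theorem~\ref{thm:HybridWazewski}: a point of $\tilde M_i$ in $\pi(G\times(0,1))$ forces, via Lemma~\ref{lem:GuardInvariant}, a point of $\tilde M_i$ in $\incl(X)$.
\item The $M_i$ are mutually disjoint because preimages of disjoint sets are disjoint.
\end{itemize}
Transporting the order by $M_i < M_j \iff \tilde M_i < \tilde M_j$, the family $\{\susp M_i\} = \{\tilde M_i\}$ with the induced order is exactly the given Morse representation. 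This is precisely Definition~\ref{defn:HybridMorseRepresentation}, so (i) requires essentially no new work.

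For (ii), the natural route is to show that each $\susp M_i$ is an isolated invariant set for $\Phi_\cH$ and then apply Theorem~\ref{thm:IsolatedSet}. The index identity $\SCH_*(M_i) = CH_*(\susp M_i)$ is then just Definition~\ref{defn:SuspensionConleyIndex}. The classical input needed is that Morse sets of a Morse representation of an isolated invariant set are themselves isolated invariant sets. Here $\susp S$ is isolated by Theorem~\ref{thm:IsolatedSet}, and $\{\susp M_j\}$ is a Morse representation of it by hypothesis, so this classical fact applies.

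I expect proving that classical fact to be the main obstacle if a self-contained argument is wanted. I would proceed as follows:
\begin{enumerate}
\item Let $\tilde N$ isolate $\susp S$. Using compactness and disjointness of the $\susp M_j$, choose a compact neighborhood $\tilde N_i \subset \tilde N$ of $\susp M_i$ that is disjoint from the other Morse sets.
\item Suppose a full trajectory $\gamma$ stays in $\tilde N_i$. Then $\gamma$ lies in $\Inv(\tilde N) = \susp S$.
\item Its $\alpha$- and $\omega$-limit sets lie in Morse sets, and since they lie in $\tilde N_i$, both must lie in $\susp M_i$.
\item If $\gamma$ were not contained in $\susp M_i$, property (3) of Definition~\ref{defn:MorseRepresentation} would give $M_i < M_i$, contradicting strictness of the order.
\item Hence $\Inv(\tilde N_i) = \susp M_i \subset \Int \tilde N_i$.
\end{enumerate}
One subtlety in step 4: property (3) concerns trajectories not contained in $\bigcup_k M_k$. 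However, $\gamma$ avoids every $M_j$ with $j \neq i$, so $\gamma \not\subset \susp M_i$ forces $\gamma \not\subset \bigcup_k \susp M_k$, and the condition applies. Alternatively, one can simply cite the standard result from \cite{Handbook2002}.

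Finally, $M_i$ is invariant under $\cH$ by Definition~\ref{defn:HybridMorseRepresentation}. Theorem~\ref{thm:IsolatedSet} then transfers isolation of $\susp M_i$ under $\Phi_\cH$ to isolation of $M_i$ under $\cH$, and the equality of indices is immediate from the definition.
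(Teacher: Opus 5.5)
Your proposal is correct and follows the paper's proof. In (i) you pull the Morse sets back along $\incl$ via Proposition~\ref{prop:SemiflowInvariantSet} and transport the order. In (ii) you use that Morse sets of the isolated invariant set $\susp S$ are isolated, then transfer isolation back with Theorem~\ref{thm:IsolatedSet}. Your version is also more complete than the paper's: it checks that the $M_i$ are nonempty, gets compactness from closedness instead of from Lemma~\ref{lem:CompactIsolatedSet} (which the paper applies before the $M_i$ are known to be isolated), and gives a self-contained proof of the classical isolation fact that the paper only asserts.
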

\begin{proof}
  For (i), by Proposition~\ref{prop:SemiflowInvariantSet}, each $M_i = \incl\inv(\tilde{M}_i)$ is invariant under $\cH$ with $\susp M_i = \tilde{M}_i$. The sets $M_i$ are mutually disjoint since $\incl$ is injective and the $\tilde{M}_i$ are disjoint. Compactness follows from Lemma~\ref{lem:CompactIsolatedSet}. The partial order on $\{M_i\}$ is inherited from $\{\tilde{M}_i\}$.

  For (ii), since $\{\susp M_1, \ldots, \susp M_n\}$ is a Morse representation of $\susp S$, each $\susp M_i$ is an isolated invariant set under $\Phi_\cH$. By Theorem~\ref{thm:IsolatedSet}, each $M_i$ is isolated under $\cH$. The index formula follows from Definition~\ref{defn:SuspensionConleyIndex}.
\end{proof}

\begin{rem}
  \label{rem:HybridMorseOrder}
  The partial order of the hybrid Morse representation admits a direct description in $\cH$. By Lemma~\ref{lem:SuspensionOrbit} and Proposition~\ref{prop:SemiflowInvariantSet}, full hybrid trajectories of $\cH$ on $S$ correspond, via $\incl$ and $\susp$, to full trajectories of $\Phi_\cH$ on $\susp S$. Thus $M_i < M_j$ if and only if there exists a full hybrid trajectory in $S$ whose lift to $\susp S$ has $\alpha$-limit in $\susp M_j$ and $\omega$-limit in $\susp M_i$. A complete and intrinsic characterization in terms of hybrid limit sets, attractor-repeller pairs, and connection matrices is the subject of future work.
\end{rem}

%% file: sections/examples.tex
\section{Examples}
\label{sec:examples}

In this section, we illustrate the application of the theoretical framework. We compute the index by inspection for several hybrid systems and showcase how Conley index theory can be applied via the correspondence with the suspension semiflow $\Phi_\cH$ to characterize the underlying hybrid dynamics. Throughout, we take homology coefficients over a field $\F$. All examples in this section satisfy the Trapping Guard Condition, restricting the state space $X$ or guard $G$ to the appropriate region if necessary.

\input{examples/bouncingball}
\input{examples/thermostat}

\input{examples/cubicmap}
\input{examples/rimlesswheel}
\input{examples/neuron}

\input{examples/brokenthermostat}

%% file: examples/bouncingball.tex
\subsection{Bouncing Ball: The Index of Zeno}
\label{subsec:BouncingBall}

We revisit the bouncing ball (Example~\ref{ex:BouncingBall}), defined by the hybrid system $\cH = (X, \varphi, G, r)$, with a coefficient of restitution $c \in [0,1)$. It is a well-known that all solutions of this system converge to the origin in finite time. Let $S = \setof{(0,0)}$ be the set representing the origin. If $0 < c < 1$, trajectories approaching $S$ exhibit Zeno behavior, meaning they undergo an infinite number of impacts in a finite amount of time.

We analyze this invariant set using the suspension semiflow. Note that $S \subset G$ and $r(0,0)=(0,0)$. We examine the structure of the corresponding invariant set $\susp S$ in the suspension space $\susp X$. Following the construction in Section~\ref{sec:suspension}, $\susp S$ is derived from the relaxed system over $S$. This involves the cylinder segment $S \times [0,1]$ in $X'$, where the endpoint $((0,0),1)$ is identified to $((0,0),0)$ since $r(0,0)=(0,0)$. Since $S$ is a point, $\susp S$ is the mapping torus of the identity map on a point, which is topologically a circle, $S^1$.

The suspension semiflow $\Phi_\cH$ flows continuously around this circle with period $T=1$. Thus, the Zeno point in $\cH$ corresponds to a periodic orbit in $\Phi_\cH$.

Although that is enough to characterize the homological index, we compute the Hybrid Suspension Index $\SCH_*(S)$ by identifying an isolating neighborhood. Consider the energy function $E(x,y) = \frac{1}{2}y^2 + gx$. Note that the energy is conserved along the flow, since $\dot{E} = y\dot{y} + g\dot{x} = -gy + gy = 0$, and strictly decreases at impacts since if $(0,y) \in G$ with $y < 0$, then $E(r(0,y)) = \frac{1}{2}c^2y^2 < \frac{1}{2}y^2 = E(0,y)$. Thus the set
\[
  N_\varepsilon = \setdef{ (x,y) \in X }{ E(x,y) \leq \varepsilon }
\]
is an isolating neighborhood for $S$. The pair $(N_\varepsilon, \emptyset)$ is an index pair. By Theorem~\ref{thm:SuspensionIndexPair}, $\SCH_*(S) = H_*(\susp N_\varepsilon)$, and $\susp N_\varepsilon \cong S^1$.
\begin{figure}[ht]
  \centering
  \begin{tikzpicture}
    \draw[->, thick] (-0.5, 0) -- (4, 0) node[below right] {$x$ (position)};
    \draw[->, thick] (0, -2.5) -- (0, 2.5) node[below left] {$y$ (velocity)};
    \draw[line width=2pt, red] (0,0) -- (0,-2.5) node[left, black] {$G$};
    \draw[thick, midarrow=0.5] (0, 2.2) .. controls (3.5, 2.2) and (3.5, -2.2) .. (0, -2.2);
    \node[above right] at (2.5, 1.5) {$\varphi$};
    \draw[thick, dashed, midarrow=0.5] (0, -2.2) .. controls (-1.5, -1) and (-1.5, 1) .. (0, 1.5);
    \draw[thick,midarrow=0.5] (0, 1.5) .. controls (2, 1.5) and (2, -1.5) .. (0, -1.5);
    \draw[thick, dashed, midarrow=0.5] (0, -1.5) .. controls (-0.6, -1) and (-0.6, 1) .. (0, 0.75);
    \node[above left] at (-1.1, 0) {$r$};
  \end{tikzpicture}
  \caption{Hybrid dynamical system representing the bouncing ball. Solid lines represent the flow $\varphi$, and dashed lines represent the reset map $r$ upon hitting the guard set $G$ in red.}
\end{figure}

The index is $\SCH_*(S) = H_*(S^1)$, that is,
\[
  \SCH_k(S) = H_k(S^1) =
  \begin{cases}
    \F & \text{if } k=0, 1 \\
    0 & \text{otherwise.}
  \end{cases}
\]

In the semiflow setting, this is the Conley index of an attracting periodic orbit. Note that the suspension admits a Poincaré section. Since $\dim \SCH_0(S) = \dim \SCH_1(S) = 1$, Theorem~\ref{thm:McCord}(ii) confirms the existence of a periodic orbit in $\susp S$.



%% file: examples/thermostat.tex
\subsection{Thermostat: (Hybrid) Periodic Orbit}
\label{subsec:Thermostat}

Consider a standard thermostat model controlling temperature $z$, switching between ON ($q=1$) and OFF ($q=0$).
In our setting, consider the hybrid system defined as follows:
\begin{itemize}
  \item $X = [0,100] \times \setof{0,1}$, where the first coordinate represents temperature and the second coordinate represents the heater state (0 = OFF, 1 = ON),
  \item $\varphi$ is the local semiflow generated by the ODE $\dot{z} = -z + z_0 + z_{\delta} \cdot q$, where $z_0$ is the base temperature and $z_{\delta}$ is the temperature increment when the heater is on,
  \item $G = [0,z_{\min}] \times \{0\} \cup [z_{\max},100] \times \{1\}$, where $z_{\min}, z_{\max} \in [0,100]$ with $z_{\min} < z_{\max}$ are the minimum and maximum temperature thresholds,
  \item $r(z,q) = (z,1-q)$ is the reset map.
\end{itemize}

We shall consider the following parameters as in Example 1.9 of \cite{GoebelSanfeliceTeel2012}.
\begin{equation*}
  \begin{array}{rlrl}
    z_0      &= 60.0, & \quad z_{\min} &= 70.0, \\
    z_{\delta} &= 30.0, & \quad z_{\max} &= 80.0.
  \end{array}
\end{equation*}

Let $S = [z_{\min},z_{\max}]\times \setof{0,1}$. Note that $S$ is an isolated invariant set. Under the suspension, the continuous segments of the trajectory are connected by the flow through the suspension cylinders at the resets. The resulting set $\susp S$ is a continuous periodic orbit in $\Phi_\cH$, also homeomorphic to $S^1$.
Using the index pair $(N,L) = (X, \emptyset)$, we find that the Hybrid Suspension Index is given by
\[
  \SCH_k(S) = H_k(S^1) =
  \begin{cases}
    \F & \text{if } k=0, 1 \\
    0 & \text{otherwise,}
  \end{cases}
\]
which corresponds to the index of a stable periodic orbit. Unlike the bouncing ball, the thermostat satisfies $r(G)\cap G = \emptyset$: reset images $(z,1-q)$ have $q' = 1-q \neq q$, placing them outside $G$. Since $\dim \SCH_0(S) = \dim \SCH_1(S) = 1$ and $r(G) \cap G = \emptyset$ excludes periodic points of $r$, Corollary~\ref{cor:PeriodicOrbitII} guarantees $S$ contains a hybrid periodic orbit.
\begin{figure}[ht]
  \centering
  \begin{subfigure}{0.45\textwidth}
    \centering
    \adjustbox{max width=\linewidth}{%
    \begin{tikzpicture}
      \draw[thick, black] (0,0) -- (7,0);
      \draw[thick, black] (0,2) -- (7,2);
      \draw[thick, blue, midarrow=0.5] (5,0) -- (3,0);
      \draw[thick, blue, midarrow=0.5] (3,2) -- (5,2);
      \draw[thick, blue, midarrow=0.5,dashed] (3,0) -- (3,2);
      \draw[thick, blue, midarrow=0.5,dashed] (5,2) -- (5,0);
      \draw[red, midarrow=0.5,dashed] (0,0) -- (0,2);
      \draw[red, midarrow=0.5,dashed] (1,0) -- (1,2);
      \draw[red, midarrow=0.5,dashed] (2,0) -- (2,2);
      \draw[red, midarrow=0.5,dashed] (6,2) -- (6,0);
      \draw[red, midarrow=0.5,dashed] (7,2) -- (7,0);
      \draw[very thick, red] (0,0) -- (3,0);
      \draw[very thick, red] (5,2) -- (7,2);
    \end{tikzpicture}}
    \caption{$\cH = (X,\varphi,G,r)$}
    \label{subfig:thermostat_hds}
  \end{subfigure}
  \hfill
  \begin{subfigure}{0.45\textwidth}
    \centering
    \begin{tikzpicture}
      \draw[very thick, blue] (0,0) circle (1.0);
      \fill[blue] (0,-1) circle (2pt);
    \end{tikzpicture}
    \caption{$(N/L,[L])\cong (S^1,\{*\})$}
    \label{subfig:thermostat_homology}
  \end{subfigure}
  \caption{Thermostat hybrid system on $[0,100]\times\setof{0,1}$, with the trajectory through $S$ in blue and the guard set $G$ in red. The index pair $(N,L) = (X,\emptyset)$ produces $N/L \simeq S^1$, which has the index of an attracting periodic orbit.}
\end{figure}

%% file: examples/cubicmap.tex
\subsection{Cubic Reset Map: Bistability and Morse Decompositions}
\label{subsec:CubicMap}
We analyze a system where the dynamics are driven by a nonlinear reset map, illustrating a non-trivial Morse decomposition in the hybrid setting. We consider the usual suspension of the map as follows:
\begin{itemize}
  \item $X = [0,1] \times [-1,1]$,
  \item $\varphi(t,(x,y)) = (x+t \Mod{1}, y)$, $t \in \Rp$,
  \item $G = \{1\}\times [-1,1]$,
  \item $r(x,y) = \left(0,\frac{3y - y^3}{2}\right)$.
\end{itemize}
The Trapping Guard Condition is trivially satisfied as $\stoptime(x,y)=1-x$ is continuous and the flow is transverse to $G$. The interesting dynamics are determined by the iteration of the map $f$. The map $f$ has three fixed points: $y=-1, 0, 1$, which yields three periodic trajectories, $S_0$, $S_1$ and $S_2$.
\begin{figure}[ht]
  \centering
  \begin{subfigure}{0.55\textwidth}
    \centering
    \begin{tikzpicture}[
      xscale=5]
      \draw[fill=gray!10, draw=none] (0, -1) rectangle (1, 1);
      \draw[thick] (0,-1) -- (0,1);
      \draw[very thick, red] (1,-1) -- (1,1);
      \draw[thin] (0,-1) -- (1,-1);
      \draw[thin] (0,1) -- (1,1);

      \node[below] at (1, -1) {$x$};
      \node[left] at (0, 1) {$y$};

      \draw[thick, violet, midarrow=0.3] (0,0) -- (1,0);
      \node[violet, anchor=west] at (1, 0) {$S_0$};
      \draw[thick, blue, midarrow=0.3] (0,1) -- (1,1);
      \node[blue, anchor=west] at (1, 1) {$S_1$};
      \draw[thick, blue, midarrow=0.3] (0,-1) -- (1,-1);
      \node[blue, anchor=west] at (1, -1) {$S_2$};

      \node[red, anchor=west] at (1,-0.5) {$G$};

      \draw[thick, gray ,midarrow=0.3] (0, 0.4) -- (1, 0.4);
      \draw[thick, gray, dashed, midarrow=0.3] (1, 0.4) -- (0, 0.57);
      \draw[thick, gray, midarrow=0.3] (0, 0.57) -- (1, 0.57);
      \draw[thick, gray, midarrow=0.3] (0, -0.5) -- (1, -0.5);
      \draw[thick, gray, dashed, midarrow=0.3] (1, -0.5) -- (0, -0.69);
      \draw[thick, gray, midarrow=0.3] (0, -0.69) -- (1, -0.69);
    \end{tikzpicture}
    \caption{$\cH = (X,\varphi,G,r)$}
    \label{subfig:unstable}
  \end{subfigure}%
  \hfill
  \begin{subfigure}{0.45\textwidth}
    \centering
    \begin{tikzpicture}
      \draw[->] (-1.4, 0) -- (1.4, 0) node[below] {$y_{n}$};
      \draw[->] (0, -1.4) -- (0, 1.4) node[left] {$y_{n+1}$};
      \draw[domain=-1.25:1.25, samples=100, smooth, variable=\y, thick, blue] plot ({\y}, {(3*\y - \y*\y*\y)/2});
      \draw[dashed, gray] (-1.3, -1.3) -- (1.3, 1.3) node[pos=0.8, below right] {$y_{n+1}=y_n$};

      \fill[violet] (0,0) circle (2pt);
      \fill[blue] (1,1) circle (2pt);
      \fill[blue] (-1,-1) circle (2pt);
    \end{tikzpicture}
    \caption{Graph of $(3y-y^3)/2$ and its fixed points}
    \label{subfig:cubic_map}
  \end{subfigure}
  \caption{Visualization of the hybrid system with a nonlinear reset map.}
  \label{fig:unstable}
\end{figure}
We compute their indices. For the attractors $S_{1}$ and $S_{2}$, the analysis is identical to the thermostat example:
\[
  \SCH_k(S_{1}) = \SCH_k(S_{2}) = H_k(S^1).
\]
For $S_{0}$, the corresponding invariant set $\susp S_{0}$ in the suspension semiflow $\Phi_\cH$ is an unstable periodic orbit. The instability arises from $f'(0) = \tfrac{3}{2} > 1$, giving one unstable direction transverse to the orbit. Combined with the flow direction along the periodic orbit, the unstable manifold has dimension $2 = 1 + 1$.
By Example~\ref{ex:conley}, the Conley index of a hyperbolic periodic orbit with unstable manifold of dimension $n+1$ is $CH_{k} = \F$ for $k=n,n+1$ and zero otherwise. Here $n+1=2$, so $n=1$ and
\[
  \SCH_k(S_{0}) =
  \begin{cases}
    \F & \text{if } k=1, 2 \\
    0 & \text{otherwise.}
  \end{cases}
\]
The indices clearly distinguish the stability types. The system admits a Morse representation $\sM=\{S_{0}, S_{1}, S_{2}\}$ with the ordering $S_{1} < S_{0}$ and $S_{2} < S_{0}$, which captures the bistable dynamics. Note that for all three    sets, Theorem~\ref{thm:McCord}(ii) applies since $G$ is a section, confirming they contain periodic orbits.

We remark that, in this case, the Hybrid Suspension Index of the suspension of the map is equivalent to the reduced mapping torus index \cite{Weilandt2019} of the map $f$; see also \cite{Floer1990,Robbin1989}.

%% file: examples/rimlesswheel.tex
\subsection{Rimless Wheel: A Simple Planar Walking Model}
\label{subsec:RimlessWheel}

We discuss the rimless wheel model as in \cite{Rimless}. The model represents a simple planar walking model, where a wheel with spokes rolls down on an inclined plane \cite{Coleman1998}. Given the slope $\gamma$ and the spoke angle $2\alpha$, one considers the angle $\theta$ between the vertical and the spoke to define the hybrid dynamical system as follows:
\begin{itemize}
  \item $X = [\gamma-\alpha,\alpha+\gamma] \times [-1,2] \subset \R^2$,
  \item $\varphi$ generated by the ODE $\ddot{\theta} = \sin(\theta)$,
  \item $G = \setdef{ (\theta,\dot{\theta}) \in X } { \theta = \alpha + \gamma , \dot{\theta}\geq 0 }$,
  \item $r(\theta,\dot{\theta}) = (2\gamma - \theta, \cos(2\alpha) \dot{\theta})$.
\end{itemize}
Consider the parameters $\alpha=0.4$ and $\gamma=0.2$ and notice that the continuous part is governed by the pendulum equation $\ddot{\theta}=\sin(\theta)$, therefore, the quantity
\[
  E(\theta,\dot{\theta}) = \frac{1}{2}\dot{\theta}^2+\cos(\theta),
\]
is preserved along trajectories on $X\setminus G$. Indeed, note that
\[
  \frac{d}{dt} \left( \frac{1}{2}\dot{\theta}^2+\cos(\theta) \right)
  = \dot{\theta}\ddot{\theta}-\dot{\theta}\sin(\theta)
  = \dot{\theta}(\ddot{\theta}-\sin(\theta))
  = 0.
\]
Let $p = (-0.2,0.7)$ and $q = (-0.2,0.3)$. We show that the pair $(N_{p,q},\emptyset)$ is a hybrid index pair for $\cH$, where
\[
  N_{p,q} = \setdef{ (\theta,\dot{\theta}) \in X }{ E(q) \leq E(\theta,\dot{\theta}) \leq E(p) }
\]
It is enough to verify what happens to $E$ after the reset since the solutions starting at $p$ and $q$ will bound our set. Solving for $p_f = \varphi(\stoptime(p),p)$, and $q_f = \varphi(\stoptime(q),q)$, using $E$ at $\theta=0.6$ yields the final points in the segment
\begin{align*}
  p_f & = (0.6,\sqrt{(0.7)^2+2(\cos(0.2)-\cos(0.6))}), \\
  q_f & = (0.6,\sqrt{(0.3)^2+2(\cos(0.2)-\cos(0.6))}).
\end{align*}
Thus,
\begin{align*}
  E(r(p_f)) & = \frac{1}{2}[\cos(0.8)]^2\left[ (0.7)^2+2(\cos(0.2)-\cos(0.6)) \right] + \cos(-0.2) \approx 1.174, \\
  E(r(q_f)) & = \frac{1}{2}[\cos(0.8)]^2\left[ (0.3)^2+2(\cos(0.2)-\cos(0.6)) \right] + \cos(-0.2) \approx 1.077.
\end{align*}
Since $E(q) \approx 1.025$ and $E(p) \approx 1.225$, we have $E(r(p_f)) < E(p)$ and $E(r(q_f)) > E(q)$. Thus reset images from both boundaries land strictly inside $N_{p,q}$, so $r(N_{p,q}\cap G) \subset \Int(N_{p,q})$. Let $A = \Inv(N,\cH)$ be the maximal invariant set in $N$ and observe that its index is $\SCH_*(A) = H_*(S^1)$.
\begin{figure}
  \centering
  \begin{tikzpicture}[x=6cm,y=2cm]
    \def\alpha{0.4}
    \def\gamma{0.2}
    \def\a{0.1}

    \draw[-{Stealth[length=2mm]}] (\gamma-\alpha,0) -- (\alpha+\gamma,0) node[right] {$\theta$};
    \draw[-{Stealth[length=2mm]}] (0,-1) -- (0,2) node[left] {$\dot{\theta}$};

    \draw[dashed, gray] (\gamma-\alpha, -1) rectangle (\alpha+\gamma, 2);
    \draw[red, thick] (\alpha+\gamma, 0) -- (\alpha+\gamma, 2) node[below left] {$G$};

    \fill[blue] (-0.2, 0.7) circle (0.5pt) node[left] {$p$};
    \fill[blue] (-0.2, 0.3) circle (0.5pt) node[left] {$q$};
    \draw[blue, thick, midarrow=0.5] (-0.2, 0.7) .. controls (0.2, 0.8) and (0.4, 0.85) .. (0.6, 0.92);
    \draw[blue, thick, midarrow=0.5] (-0.2, 0.3) .. controls (0.2, 0.4) and (0.4, 0.5) .. (0.6, 0.62);

    \fill[blue] (0.6, 1.0) circle (0.5pt) node[right] {$p_f$};
    \fill[blue] (0.6, 0.6) circle (0.5pt) node[right] {$q_f$};

    \draw[blue, thick, dashed, midarrow=0.8] (0.6, 0.92) -- (-0.2, 0.65);
    \draw[blue, thick, dashed, midarrow=0.8] (0.6, 0.62) -- (-0.2, 0.45);

    \fill[gray!60, opacity=0.5] (-0.2, 0.7) .. controls (0.2, 0.8) and (0.4, 0.85) .. (0.6, 0.92) -- (0.6, 0.62) .. controls (0.4, 0.5) and (0.2, 0.4) .. (-0.2, 0.3) -- cycle;
    \node at (0.2, 0.8) [above] {$N_{p,q}$};

    \draw[black, very thick, midarrow=0.5] (-0.2, 0.55) .. controls (0, 0.55) and (0.4, 0.7) .. (0.6, 0.8);

    \node[fill=gray!30, rectangle, minimum width={2*\a*1cm},minimum height={2*\a*1cm}, rotate=45, line width=0.5pt] at (0,0) {};
    \draw[black!50, ultra thick] (0,2*\a*1cm) -- (2*\a*1cm,0);
    \draw[black!50, ultra thick] (0,-2*\a*1cm) -- (-2*\a*1cm,0);
  \end{tikzpicture}
  \caption{Illustration of index pairs for limit cycle and equilibrium in the Rimless Wheel.}
  \label{fig:rimless_wheel}
\end{figure}
We also consider the equilibrium point $R = \setof{(0,0)}$. Since $R \cap G = \emptyset$ (as $\theta = 0 \neq \alpha + \gamma = 0.6$), the hybrid dynamics around $R$ are purely continuous. The suspension $\susp R$ is simply $\pi(\iota(R))\cong R$ itself, a fixed point for $\Phi_\cH$. The Hybrid Suspension Index is therefore the classical Conley index of $R$ under $\varphi$. Note that $R$ consists of a saddle point with a one-dimensional unstable manifold. The Conley index of hyperbolic fixed points is well-known, and in this case given by
\[
  \SCH_k(R) =
  \begin{cases}
    \F & \text{if } k=1 \\
    0 & \text{otherwise.}
  \end{cases}
\]
If one identifies an index pair for $R$, then Theorem~\ref{thm:McCord}(i) guarantees that $R$ contains a fixed point. This illustrates how the framework handles invariant sets arising from both the hybrid interaction and the underlying continuous flow.

%% file: examples/neuron.tex
\subsection{Neuron model: Quadratic Integrate-and-Fire}
\label{subsec:Neuron}

We analyze a simple spiking neuron model based on \cite{Izhikevich2003}. The 2-dimensional model has variables $v$, for the membrane potential, and $u$, for the recovery variable. The hybrid dynamical system can be written as $\cH = (X,\varphi,G,r)$ where 
\begin{itemize}
    \item $X = \setdef{(v,u)\in \R^2}{v \leq v_{peak}}$, for some peak value of $v$, $v_{peak} > 0$. 
    \item $\varphi$ are solutions of the ODE
    \[
        \begin{cases} 
            C\dot{v} & = k(v-v_r)(v-v_t) - u + I \\
            \dot{u} & = a(b(v-v_r)-u)            
        \end{cases} 
    \]
    where $I$ is the input current and $C, k, a > 0$, $b < 0$, $v_r < v_t$ are parameters. 
    \item $G = \setdef{(v,u) \in X}{v = v_{peak}}$,
    \item $r(v,u) = (c,u+d)$ where $c < v_{peak}$ is the reset voltage and $d>0$ is a recovery increment. 
\end{itemize}
We consider the parameters $C = 100$, $k = 0.7$, $v_r = -60$, $v_t = -40$, $a = 0.03$, $b = -2$, $I = 70$, $c = -50$, $d = 100$, and $v_{peak} = 35$. The quadratic term pushes $v$ towards the threshold, producing a spike. At the peak $v_{peak}$, the voltage is reset to $c$ and recovery variable increased by $d$. The recovery variable acts as negative feedback since it increases at the spike and suppresses activity until the decay. 

On the guard $G$, we have that $\dot{v} = (k(v_{peak}-v_r)(v_{peak}-v_t) - u + I)/C$. For the parameters of interest, $u < k(v_{peak}-v_r)(v_{peak}-v_t) + I$, so $\dot{v}>0$ on $G$. Therefore, the flow is transverse to $G$ and there is a neighborhood of $G$ on $X$ that reaches it in finite time. 
\begin{figure}[ht]
  \centering
  \begin{tikzpicture}[xscale=3.5, yscale=1.8]
    \fill[gray!30, opacity=0.6] 
      (-1,-1) -- (0.5,-1) -- (0.5,0.53) -- (-0.48,0.53) -- (-0.48,2) -- (-1,2) -- cycle;
    \draw[thick] 
      (-1,-1) -- (0.5,-1) -- (0.5,0.53) -- (-0.48,0.53) -- (-0.48,2) -- (-1,2) -- cycle;
    \node at (-0.85, 1.7) {$N$};
    
    \draw[red, very thick] (0.5, -1) -- (0.5, 0.53);
    \node[red, right] at (0.5, -0.2) {$G \cap N$};
    
    \draw[red, thick, dashed] (-0.61, -0.07) -- (-0.61, 1.9);
    \node[red, left] at (-0.5, 2.15) {\small $r(N\cap G)$};
    
    \draw[black!50, thick] (-1, 2.0) parabola bend (-0.61, 0) (-0.22, 2.0);
    \node[black!50] at (-0.05, 2.00) {\small $\dot{v}=0$};
    
    \draw[black!50, thick] (-1, 0.13) -- (0.5, -0.63);
    \node[black!50] at (0.65, -0.75) {\small $\dot{u}=0$};
    
    \draw[black, very thick, midarrow=0.5] 
      (-0.61, 0.21) .. controls (-0.70, 0.15) and (-0.70, 0.02) .. (-0.65, -0.02)
      .. controls (-0.60, -0.08) and (-0.45, -0.10) .. (-0.20, -0.11)
      .. controls (0.05, -0.12) and (0.30, -0.12) .. (0.50, -0.12);
    \draw[black, very thick, dashed, midarrow=0.8] (0.5, -0.12) -- (-0.61, 0.21);
    
    \fill[black] (-0.61, 0.21) circle (1pt);
    \fill[black] (0.5, -0.12) circle (1pt);
    
    \node[below, font=\scriptsize] at (-1, -1) {$v_0$};
    \node[below, font=\scriptsize] at (-0.61, -1) {$c$};
    \node[below, font=\scriptsize] at (-0.5, -1) {$v_1$};
    \node[below, font=\scriptsize] at (0.5, -1) {$v_{peak}$};
    
    \node[left, font=\scriptsize] at (-1, -1) {$u_0$};
    \node[left, font=\scriptsize] at (-1, 0.53) {$u_1$};
    \node[left, font=\scriptsize] at (-1, 2) {$u_2$};
    
  \end{tikzpicture}
  \caption{L-shaped forward-invariant region $N$ for the simple neuron model. The limit cycle (solid black) with reset jump (dashed black) is contained in $N$. The nullclines are shown in gray. The constants values are $v_0=-80$, $v_1=-40$, $u_0=-300$, $u_1=160$, $u_2=600$.}

  \label{fig:neuron}
\end{figure}

We now construct a compact set $N$ that is forward-invariant. The key insight is constructing a bounded neighborhood whose flow is transverse at each point. We do this by evaluating the vector field along vertical or horizontal lines with the knowledge of the nullclines. Consider the L-shaped compact set $N \subseteq X$ defined by
\[
  N = ([v_0, v_1] \times [u_0, u_2]) \cup ([v_1, v_{peak}] \times [u_0, u_1]). 
\]
Together with the fact that $r(N \cap G) \subseteq \Int(N)$, the pair $(N,\emptyset)$ is an index pair for the maximal invariant set contained in $N$. Let $A = \Inv(N, \cH)$ and observe that its index is $\SCH_*(A) = H_*(S^1)$. Therefore, by Corollary~\ref{cor:PeriodicOrbitII}, $A$ contains a periodic orbit of $\cH$. We highlight the simulated limit cycle in Figure~\ref{fig:neuron}.

%% file: examples/brokenthermostat.tex
\subsection{``Broken'' Thermostat: A Cautionary Example}
\label{subsec:BrokenThermostat}

We remark that this example violates the Trapping Guard Condition and falls outside the formal theory developed in previous sections. It serves to illustrate that the TGC is essential for the index theory to behave as expected. We compute the index directly from the homological information of the pairs, without the theoretical assurances provided by the hybrid suspension semiflow.

Consider a variation of the thermostat (Example~\ref{subsec:Thermostat}) where the sensor is not working properly. In particular, we keep the same parameters but modify the guard to be simply
\[
  G = \setof{ (z_{\min},0), (z_{\max},1)}.
\]
Let $z_1 = z_0+z_\delta$.
If the system starts in the ON state ($q=1$) with $z > z_{\max}$, the temperature goes to $z_1$. If the system starts in the OFF state ($q=0$) with $z < z_{\min}$, the temperature goes to $z_0$. In either case, the initial condition does not go through the guard set at any point, so the system either goes to $(z_0,0)$ or $(z_1,1)$.

Let $S = [z_{\min},z_{\max}]\times\setof{0,1}$. Note that $S$ is an isolated invariant set that corresponds to the desired oscillatory behavior with the temperature decreasing when the thermostat is OFF ($q=0$) until it reaches $z_{\min}$, then ON ($q=1$) until it reaches $z_{\max}$, and repeating. Given that slight perturbations at $(z_{\min},0)$ and $(z_{\max},1)$ lead the system to the attracting points $(z_0,0)$ and $(z_1,1)$, one would expect $S$ to exhibit the index of an unstable periodic orbit.

Although the system does not satisfy the TGC and thus falls outside the formal theory developed in Section~\ref{sec:suspension}, we can still construct a suspension-like space by attaching cylinders to the guard and taking the quotient:
\[
  \susp X \cong \frac{(G \times [0,1])\bigsqcup X } {(x,0)\sim x, (x,1)\sim r(x)}.
\]
We emphasize that this is a heuristic construction for illustrative purposes, not a rigorous application of Definition~\ref{defn:SuspensionSemiflow} because the suspension semiflow $\Phi_\cH$ does not extend continuously across the guard. Consider the pair of compact sets
\begin{align*}
  N & = X, \\
  L & = \left([0,z_{\min}-\varepsilon] \times \setof{0}\right) \cup \left( [z_{\max}+\varepsilon,100]\times\setof{1}\right),
\end{align*}
for sufficiently small $\varepsilon>0$. Then $(N,L)$ satisfies the conditions for a hybrid index pair, that is, $\Inv(N\setminus L, \cH) = S$, $L$ is positively invariant, and $L$ is the exit set for $N$. The pair, the suspension $\susp X$, and the homotopy type of the quotient are illustrated in Figure~\ref{fig:broken_thermostat}.
\begin{figure}[ht]
  \centering
  \begin{subfigure}{0.32\textwidth}
    \centering
    \adjustbox{max width=\linewidth}{%
    \begin{tikzpicture}[scale=0.7]
      \useasboundingbox (-0.2,-0.3) rectangle (7.2,2.3);
      \draw[thick, black] (0,0) -- (7,0);
      \draw[thick, black] (0,2) -- (7,2);
      \draw[thick, red, midarrow=0.5] (5,0) -- (3,0);
      \draw[thick, red, midarrow=0.5] (3,2) -- (5,2);
      \draw[thick, red, midarrow=0.5,dashed] (3,0) -- (3,2);
      \draw[thick, red, midarrow=0.5,dashed] (5,2) -- (5,0);
      \node[circle, fill, red, inner sep=2pt] at (3,0) {};
      \node[circle, fill, red, inner sep=2pt] at (5,2) {};
      \node[circle, fill, blue, inner sep=2pt] at (1.5,0) {};
      \node[circle, fill, blue, inner sep=2pt] at (6,2) {};
      \draw[blue,midarrow=0.5] (2.7,0) -- (1.5,0);
      \draw[blue,midarrow=0.5] (0,0) -- (1.5,0);
      \draw[blue,midarrow=0.5] (5.3,2) -- (6,2);
      \draw[blue,midarrow=0.5] (7,2) -- (6,2);
    \end{tikzpicture}}
    \caption{$\cH = (X,\varphi,G,r)$}
    \label{subfig:broken_hds}
  \end{subfigure}
  \hfill
  \begin{subfigure}{0.32\textwidth}
    \centering
    \adjustbox{max width=\linewidth}{%
    \begin{tikzpicture}[scale=0.7]
      \useasboundingbox (-0.2,-0.3) rectangle (7.2,2.3);
      \draw[ultra thick, blue] (0,0) -- (3,0);
      \draw[ultra thick, blue] (5,2) -- (7,2);
      \draw[thick, black] (3,0) -- (7,0); 
      \draw[thick, black] (0,2) -- (5,2); 
      \draw[thick, black] (3,0) -- (3,2); 
      \draw[thick, black] (5,0) -- (5,2); 
    \end{tikzpicture}}
    \caption{$(\susp N, \susp L)$}
    \label{subfig:broken_suspension}
  \end{subfigure}
  \hfill
  \begin{subfigure}{0.32\textwidth}
    \centering
    \begin{tikzpicture}[scale=0.7]
      \useasboundingbox (-2.2,-1.3) rectangle (2.2,1.3);
      \draw[thick, black] (-1,0) circle (1.0);
      \draw[thick, black] (1,0) circle (1.0);
      \fill[blue] (0,0) circle (2.5pt);
      \node[blue, right] at (0.05,-0.05) {\small $[\susp L]$};
    \end{tikzpicture}
    \caption{$\susp N/\susp L \simeq S^1 \vee S^1$}
    \label{subfig:broken_quotient}
  \end{subfigure}
  \caption{The broken thermostat (a) as a hybrid dynamical system with isolated invariant sets highlighted, (b) the suspension space $\susp X=\susp N$ with $\susp L$ highlighted in blue, and (c) the homotopy type of the quotient $\susp N/\susp L$.}
  \label{fig:broken_thermostat}
\end{figure}

The homological Conley index is computed in the suspension. Set $\susp N = \susp X$ and let $\susp L \subset \susp X$ be the image of $L$ under the natural inclusion $X \hookrightarrow \susp X$. Since $L \cap G = \emptyset$, $\susp L$ is identified with $L$ as two disjoint closed intervals (Figure~\ref{subfig:broken_suspension}). Collapsing $\susp L$ to a base point $[\susp L]$, the quotient $\susp N/\susp L$ is homotopy equivalent to $S^1 \vee S^1$ (Figure~\ref{subfig:broken_quotient}). Thus
\[
  H_k(\susp N, \susp L) \cong \tilde H_k(\susp N/\susp L) =
  \begin{cases}
    \F \oplus \F & \text{if } k = 1, \\
    0 & \text{otherwise.}
  \end{cases}
\]
Note that $X/L$ alone is a wedge of two intervals, hence contractible, while the figure-eight is produced entirely by the cylinder identifications. This shows that the dictionary of Conley indices does not transfer to the hybrid setting without the suspension.

%% file: sections/conclusion.tex
\section{Conclusion}
\label{sec:conclusion}

This work bridges the gap between topological dynamics and hybrid dynamical systems, providing a framework for analyzing hybrid dynamics through Conley Index Theory. By leveraging existing frameworks for Dynamical Systems and Algebraic Topology in a continuous setting, we analyzed the topological structure of hybrid systems that satisfy the trapping guard condition \cite{Kvalheim2021}. The results show the potential of this approach in reconstructing the dynamics of hybrid systems and identifying their invariant sets.

Future work will focus on extending these methods to a broader class of hybrid systems and exploring the implications of the results in practical applications. Towards exploratory goals, we aim to develop efficient computational tools for analyzing hybrid systems by improving on the existing Python libraries of Conley-Morse-Graph Database (CMGDB) \cite{Zin2009,CMGDB} and Cubical Homology Project (pyCHomP) \cite{pyCHomP,ComputationalHomologyTextbook} to compute the Conley Index. These libraries build on algorithmic foundations for chain recurrence, Conley index computation, connection matrices, and lattice and combinatorial representations of global dynamics \cite{BanKalies2006,BoczkoKaliesMischaikow2007,Bush2012,RookFields,HarkerMischaikowSpendlove2021,kaliesvandervorst2024priestley,KaliesKastiVandervorst2018,KaliesMischaikowVandervorst2005,Mrozek1999}. We highlight existing computational tools, such as MORALS \cite{morals2024}, that analyze high-dimensional continuous controllers in robotics using Conley theory, providing insight into the underlying structure of the dynamics.

We emphasize that while our reconstruction results rely on the trapping guard condition, the broader applicability of our topological approach extends beyond this restriction. The reconstruction theory \cite{McCord1988,McCord1995} used requires an underlying semiflow to describe the invariant sets within the Morse sets, however this semiflow may not always exist. Nonetheless, even under weaker conditions, such as the hybrid basic conditions in \cite{GoebelSanfeliceTeel2012}, one can still extend Conley's framework in the sense of \cite{Goebel2023}.

%% file: references.bib
@article{BanKalies2006,
  title     = {A computational approach to {C}onley's decomposition theorem},
  author    = {Ban, Hyunju and Kalies, William D},
  journal   = {J. Comput. Nonlinear Dynam.},
  publisher = {ASME International},
  volume    = {1},
  number    = {4},
  pages     = {312--319},
  month     = oct,
  year      = {2006},
  doi       = {10.1115/1.2338651}
}

@article{BoczkoKaliesMischaikow2007,
  author  = {Eric Boczko and William D. Kalies and Konstantin Mischaikow},
  title   = {Polygonal approximation of flows},
  journal = {Topology Appl.},
  volume  = {154},
  number  = {13},
  pages   = {2501--2520},
  year    = {2007},
  doi     = {10.1016/j.topol.2007.03.019}
}

@article{Mrozek1999,
  author    = {Mrozek, Marian},
  title     = {An Algorithmic Approach to the {C}onley Index Theory},
  journal   = {J. Dynam. Differential Equations},
  publisher = {Springer Nature},
  volume    = {11},
  number    = {4},
  pages     = {711--734},
  year      = {1999},
  doi       = {10.1023/A:1022615629693}
}

@article{Bush2012,
  author  = {Bush, J. and Gameiro, M. and Harker, S. and Kokubu, H. and Mischaikow, K. and Obayashi, I. and Pilarczyk, P.},
  title   = {Combinatorial-topological framework for the analysis of global dynamics},
  journal = {Chaos},
  volume  = {22},
  number  = {4},
  pages   = {047508},
  year    = {2012},
  month   = dec,
  doi     = {10.1063/1.4767672},
  pmid    = {23278094}
}

@book{Conley1978,
  author    = {Charles Conley},
  title     = {Isolated Invariant Sets and the Morse Index},
  series    = {CBMS Regional Conference Series in Mathematics},
  volume    = {38},
  publisher = {American Mathematical Society},
  address   = {Providence, R.I.},
  year      = {1978}
}

@book{GoebelSanfeliceTeel2012,
  title     = {Hybrid dynamical systems},
  author    = {Goebel, Rafal and Sanfelice, Ricardo G and Teel, Andrew R},
  publisher = {Princeton University Press},
  month     = feb,
  year      = {2012},
  address   = {Princeton, NJ},
  language  = {en}
}

@article{HarkerMischaikowSpendlove2021,
  title     = {A computational framework for connection matrix theory},
  author    = {Harker, Shaun and Mischaikow, Konstantin and Spendlove, Kelly},
  journal   = {J. Appl. Comput. Topol.},
  publisher = {Springer Science and Business Media LLC},
  volume    = {5},
  number    = {3},
  pages     = {459--529},
  month     = sep,
  year      = {2021},
  doi       = {10.1007/s41468-021-00073-3}
}

@article{KaliesKastiVandervorst2018,
  author  = {Kalies, William D. and Kasti, Dinesh and Vandervorst, Robert},
  title   = {An Algorithmic Approach to Lattices and Order in Dynamics},
  journal = {SIAM J. Appl. Dyn. Syst.},
  volume  = {17},
  number  = {2},
  pages   = {1617--1649},
  year    = {2018},
  doi     = {10.1137/17M1139606}
}

@article{KaliesMischaikowVandervorst2005,
  title     = {An algorithmic approach to chain recurrence},
  author    = {Kalies, W D and Mischaikow, K and VanderVorst, R C A M},
  journal   = {Found. Comput. Math.},
  publisher = {Springer Science and Business Media LLC},
  volume    = {5},
  number    = {4},
  pages     = {409--449},
  month     = nov,
  year      = {2005},
  doi       = {10.1007/s10208-004-0163-9}
}

@article{KaliesMischaikowVandervorst2014,
  title     = {Lattice structures for attractors {I}},
  author    = {Kalies, William D. and Mischaikow, Konstantin and Vandervorst, Robert C. A. M.},
  journal   = {J. Comput. Dyn.},
  publisher = {American Institute of Mathematical Sciences (AIMS)},
  volume    = {1},
  number    = {2},
  pages     = {307--338},
  year      = {2014},
  doi       = {10.3934/jcd.2014.1.307}
}

@article{KaliesMischaikowVandervorst2016,
  author  = {Kalies, William D. and Mischaikow, Konstantin and Vandervorst, Robert C. A. M.},
  title   = {Lattice Structures for Attractors {II}},
  journal = {Found. Comput. Math.},
  volume  = {16},
  number  = {5},
  pages   = {1151--1191},
  year    = {2016},
  month   = oct,
  doi     = {10.1007/s10208-015-9272-x},
  issn    = {1615-3383}
}

@article{KaliesMischaikowVandervorst2022,
  title     = {Lattice structures for attractors {III}},
  author    = {Kalies, William D. and Mischaikow, Konstantin and Vandervorst, Robert C. A. M.},
  journal   = {J. Dynam. Differential Equations},
  publisher = {Springer Science and Business Media LLC},
  volume    = {34},
  number    = {3},
  pages     = {1729--1768},
  month     = sep,
  year      = {2022},
  doi       = {10.1007/s10884-021-10056-8}
}

@inproceedings{morals2024,
  title     = {{MORALS}: Analysis of High-Dimensional Robot Controllers via Topological Tools in a Latent Space},
  author    = {Ewerton R. Vieira and Aravind Sivaramakrishnan and Sumanth Tangirala and Edgar Granados and Konstantin Mischaikow and Kostas E. Bekris},
  booktitle = {IEEE International Conference on Robotics and Automation (ICRA)},
  year      = {2024}
}

@article{Kvalheim2021,
  title     = {Conley's fundamental theorem for a class of hybrid systems},
  author    = {Kvalheim, Matthew D and Gustafson, Paul and Koditschek, Daniel E},
  journal   = {SIAM J. Appl. Dyn. Syst.},
  publisher = {Society for Industrial \& Applied Mathematics (SIAM)},
  volume    = {20},
  number    = {2},
  pages     = {784--825},
  month     = jan,
  year      = {2021},
  doi       = {10.1137/20M1336576}
}

@article{Goebel2023,
  title   = {A Direct Proof of {C}onley's Decomposition for Well-Posed Hybrid Inclusions},
  author  = {Goebel, Rafal},
  journal = {Systems Control Lett.},
  volume  = {180},
  pages   = {105604},
  year    = {2023},
  issn    = {0167-6911},
  doi     = {10.1016/j.sysconle.2023.105604}
}

@book{HatcherAT,
  title     = {Algebraic Topology},
  author    = {Allen Hatcher},
  year      = {2002},
  publisher = {Cambridge University Press},
  address   = {Cambridge},
  isbn      = {0-521-79160-X}
}

@book{diBernardo2008Piecewise,
  title     = {Piecewise-smooth Dynamical Systems: Theory and Applications},
  author    = {di Bernardo, Mario and Budd, Christopher J. and Champneys, Alan R. and Kowalczyk, Piotr},
  series    = {Applied Mathematical Sciences},
  publisher = {Springer London},
  year      = {2008},
  volume    = {163},
  doi       = {10.1007/978-1-84628-708-4},
  isbn      = {978-1-84628-039-9},
  pages     = {xxii+482}
}

@book{Schaft2000Hybrid,
  title     = {An Introduction to Hybrid Dynamical Systems},
  author    = {van der Schaft, Arjan and Schumacher, Hans},
  series    = {Lecture Notes in Control and Information Sciences},
  volume    = {251},
  publisher = {Springer London},
  year      = {2000},
  doi       = {10.1007/BFb0109998},
  isbn      = {978-1-4471-3916-4},
  pages     = {xiv+174}
}

@article{kaliesvandervorst2024priestley,
  title   = {Priestley Duality and Representations of Global Dynamics},
  author  = {Kalies, William and Vandervorst, Robert},
  journal = {Qual. Theory Dyn. Syst.},
  volume  = {24},
  number  = {4},
  pages   = {186},
  year    = {2025},
  doi     = {10.1007/s12346-025-01343-6}
}

@article{McCord1995,
  title     = {Zeta functions, periodic trajectories, and the {C}onley index},
  author    = {McCord, Christopher and Mischaikow, Konstantin and Mrozek, Marian},
  journal   = {J. Differential Equations},
  publisher = {Elsevier BV},
  volume    = {121},
  number    = {2},
  pages     = {258--292},
  month     = sep,
  year      = {1995},
  doi       = {10.1006/jdeq.1995.1129}
}

@article{McCord1988,
  title     = {Mappings and homological properties in the {C}onley index theory},
  author    = {McCord, Christopher},
  journal   = {Ergodic Theory Dynam. Systems},
  publisher = {Cambridge University Press (CUP)},
  volume    = {8},
  pages     = {175--198},
  month     = dec,
  year      = {1988},
  doi       = {10.1017/S014338570000941X}
}

@inproceedings{Rimless,
  author    = {Shia, Victor and Vasudevan, Ram and Bajcsy, Ruzena and Tedrake, Russ},
  booktitle = {53rd IEEE Conference on Decision and Control},
  title     = {Convex computation of the reachable set for controlled polynomial hybrid systems},
  year      = {2014},
  volume    = {},
  number    = {},
  pages     = {1499--1506},
  doi       = {10.1109/CDC.2014.7039612}
}

@phdthesis{Coleman1998,
  author = {Coleman, Michael Jon},
  title  = {A stability study of a three-dimensional passive-dynamic model of human gait},
  school = {Cornell University},
  year   = {1998}
}

@incollection{Conley1975,
  author    = {Conley, C. C. and Smoller, J. A.},
  title     = {The existence of heteroclinic orbits, and applications},
  booktitle = {Dynamical Systems, Theory and Applications},
  series    = {Lecture Notes in Phys.},
  volume    = {38},
  year      = {1975},
  pages     = {511--524},
  publisher = {Springer}
}

@article{Day2019,
  title     = {Sofic shifts via {C}onley index theory: Computing lower bounds on recurrent dynamics for maps},
  author    = {Day, Sarah and Frongillo, Rafael},
  journal   = {SIAM J. Appl. Dyn. Syst.},
  publisher = {Society for Industrial \& Applied Mathematics (SIAM)},
  volume    = {18},
  number    = {3},
  pages     = {1610--1642},
  month     = jan,
  year      = {2019},
  doi       = {10.1137/18M1192007}
}

@incollection{Handbook2002,
  title     = {Conley Index},
  booktitle = {Handbook of Dynamical Systems},
  author    = {Mischaikow, Konstantin and Mrozek, Marian},
  publisher = {Elsevier},
  pages     = {393--460},
  series    = {Handbook of dynamical systems},
  year      = {2002}
}

@article{MischaikowMrozek1995,
  author  = {Mischaikow, Konstantin and Mrozek, Marian},
  title   = {Chaos in the {L}orenz equations: A computer-assisted proof},
  journal = {Bull. Amer. Math. Soc.},
  volume  = {32},
  number  = {1},
  pages   = {66--72},
  year    = {1995},
  doi     = {10.1090/S0273-0979-1995-00558-6}
}

@misc{RookFields,
  title         = {Global Dynamics of Ordinary Differential Equations: Wall Labelings, {C}onley Complexes, and Ramp Systems},
  author        = {Marcio Gameiro and Tomáš Gedeon and Hiroshi Kokubu and Konstantin Mischaikow and Hiroe Oka and Bernardo Rivas and Ewerton Vieira and Daniel Gameiro},
  year          = {2024},
  eprint        = {2412.11078},
  archiveprefix = {arXiv},
  primaryclass  = {math.DS},
  doi           = {10.48550/arXiv.2412.11078}
}

@article{Alex2023,
  title     = {Continuation sheaves in dynamics: Sheaf cohomology and bifurcation},
  author    = {Dowling, K Alex and Kalies, William D and Vandervorst, Robert C A M},
  journal   = {J. Differential Equations},
  publisher = {Elsevier BV},
  volume    = {367},
  pages     = {124--198},
  month     = sep,
  year      = {2023},
  doi       = {10.1016/j.jde.2023.04.041}
}

@article{Zin2009,
  author  = {Arai, Zin and Kalies, William and Kokubu, Hiroshi and Mischaikow, Konstantin and Oka, Hiroe and Pilarczyk, Pawe{\l}},
  title   = {A Database Schema for the Analysis of Global Dynamics of Multiparameter Systems},
  journal = {SIAM J. Appl. Dyn. Syst.},
  volume  = {8},
  number  = {3},
  pages   = {757--789},
  year    = {2009},
  doi     = {10.1137/080734935}
}

@misc{CMGDB,
  author = {Gameiro, Marcio and Harker, Shaun and others},
  title  = {{CMGDB}: {Conley-Morse-Graph Database}},
  url    = {https://github.com/marciogameiro/cmgdb},
  year   = {2020}
}

@misc{pyCHomP,
  author = {Gameiro, Marcio and Harker, Shaun and others},
  title  = {{pyCHomP2}: {Python} Bindings for {CHomP}},
  url    = {https://github.com/marciogameiro/pychomp2},
  year   = {2017}
}

@article{Weilandt2019,
  title     = {The {C}onley index for discrete dynamical systems and the mapping torus},
  author    = {Weilandt, Frank},
  journal   = {J. Appl. Comput. Topol.},
  publisher = {Springer Science and Business Media LLC},
  volume    = {3},
  number    = {1-2},
  pages     = {119--138},
  month     = jun,
  year      = {2019},
  doi       = {10.1007/s41468-019-00027-w}
}

@article{Bonotto2024,
  title   = {Recursiveness on impulsive dynamical systems: Minimality, non-wandering points, the center of {B}irkhoff and attractors},
  author  = {Bonotto, E. M. and Demuner, D. P. and Souto, G. M.},
  journal = {J. Differential Equations},
  volume  = {410},
  pages   = {46--75},
  year    = {2024},
  issn    = {0022-0396},
  doi     = {10.1016/j.jde.2024.07.017}
}

@article{Floer1990,
  title     = {A topological persistence theorem for normally hyperbolic manifolds via the {Conley} index},
  author    = {Floer, Andreas},
  journal   = {Trans. Amer. Math. Soc.},
  volume    = {321},
  number    = {2},
  pages     = {647--657},
  year      = {1990},
  doi       = {10.2307/2001579}
}

@book{ComputationalHomologyTextbook,
  author    = {Kaczynski, Tomasz and Mischaikow, Konstantin and Mrozek, Marian},
  title     = {Computational Homology},
  publisher = {Springer-Verlag},
  year      = {2004},
  series    = {Applied Mathematical Sciences},
  volume    = {157},
  address   = {New York},
  isbn      = {978-0-387-40853-8},
  doi       = {10.1007/b97215}
}

@unpublished{Nanda2021,
  author = {Nanda, Vidit},
  title  = {Computational Algebraic Topology Lecture Notes},
  year   = {2021},
  note   = {Lecture Notes},
  url    = {https://people.maths.ox.ac.uk/nanda/cat/TDANotes.pdf}
}

@unpublished{Robbin1989,
  author = {Robbin, Joel W. and Salamon, Dietmar A. and Zeeman, E. C.},
  title  = {Morse Inequalities and Zeta Functions},
  year   = {1989},
  note   = {Preprint, University of Wisconsin-Madison and University of Warwick},
  url    = {https://people.math.ethz.ch/~salamon/PREPRINTS/zeta.pdf}
}

@article{Salamon1985,
  author    = {Salamon, Dietmar},
  title     = {Connected simple systems and the {Conley} index of isolated invariant sets},
  journal   = {Trans. Amer. Math. Soc.},
  volume    = {291},
  number    = {1},
  pages     = {1--41},
  year      = {1985},
  doi       = {10.2307/1999893}
}

@book{Rybakowski1987,
  author    = {Rybakowski, Krzysztof P.},
  title     = {The Homotopy Index and Partial Differential Equations},
  publisher = {Springer-Verlag},
  address   = {Berlin Heidelberg},
  year      = {1987},
  series    = {Universitext}
}

@article{Hybrifold,
  title   = {Towards a geometric theory of hybrid systems},
  author  = {Simic, Slobodan N and Johansson, Karl Henrik and Sastry, Shankar and Lygeros, John},
  journal = {Dyn. Contin. Discrete Impuls. Syst. Ser. B Appl. Algorithms},
  volume  = {12},
  number  = {5-6},
  pages   = {649--687},
  year    = {2005}
}

@article{Izhikevich2003,
  author={Izhikevich, E.M.},
  journal={IEEE Trans. Neural Netw.}, 
  title={Simple model of spiking neurons}, 
  year={2003},
  volume={14},
  number={6},
  pages={1569--1572},
  doi={10.1109/TNN.2003.820440}}
